\DeclareMathAlphabet{\mathpzc}{OT1}{pzc}{m}{it}
\theoremstyle{plain}
\newtheorem{thm}{Theorem}[section] 
\theoremstyle{definition}
\newtheorem{lem}[thm]{Lemma}
\newtheorem{quest}[thm]{Question}
\newtheorem{prop}[thm]{Proposition}
\newtheorem{cor}[thm]{Corollary}
\def\XXint#1#2#3{{\setbox0=\hbox{$#1{#2#3}{\int}$ }
		\vcenter{\hbox{$#2#3$ }}\kern-.6\wd0}}
\newcounter{MPequ}
\newcounter{AppA}
\newenvironment{AppA}
{\stepcounter{AppA}%
	\addtocounter{equation}{0}%
	\equation}
{\endequation}
\newcounter{AppB}
\newenvironment{AppB}
{\stepcounter{AppB}%
	\addtocounter{equation}{0}%
	\equation}
{\endequation}
\newcounter{AppC}
\newcounter{AppD}
\newcounter{AppE}
\begin{document}\selectlanguage{english}
\begin{center}
\normalsize \textbf{\textsf{Non-optimal domains for the helicity maximisation problem}}
\end{center}
\begin{center}
 Wadim Gerner\footnote{\textit{E-mail address:} \href{mailto:wadim.gerner@edu.unige.it}{wadim.gerner@edu.unige.it}}
\end{center}
\begin{center}
{\footnotesize MaLGa Center, Department of Mathematics, Department of Excellence 2023-2027, University of Genoa, Via Dodecaneso 35, 16146 Genova, Italy}
\end{center}
{\small \textbf{Abstract:} 
In [J. Cantarella, D. DeTurck, H. Gluck and M. Teytel, J. Math. Phys. 41:5615 (2000)] the helicity isoperimetric problem which asks to find a smooth domain of fixed volume which maximises Biot-Savart helicity among all other smooth domains of fixed volume was initiated. It was shown that if an optimal domain exists, all of its boundary components must be tori.

The present work extends these results by establishing additional geometric constraints which optimal domains, if they exist, must satisfy. This allows to rule out the optimality of a broad class of solid tori. The existence of optimal domains remains an open problem.
\newline
\newline
{\small \textit{Keywords}: Helicity, Biot-Savart operator, Beltrami fields, shape optimisation problems}
\newline
{\small \textit{2020 MSC}: 47A75, 49K20, 53Z05, 76W05}
\section{Introduction}
The main equations governing ideal magnetohydrodynamics in a bounded smooth domain $\Omega\subset\mathbb{R}^3$ are the following, cf. \cite[Chapter III Remark 1.1]{AK21},
\begin{gather}
	\label{S1E1}
	\partial_tv+\nabla_vv-\nu \Delta v+\nabla p=\operatorname{curl}(B)\times B\text{, }\partial_tB=[B,v]\text{, }\operatorname{div}(B)=0=\operatorname{div}(v)\text{, }B\parallel \partial \Omega\text{, }v|_{\partial\Omega}=0
\end{gather}
where $v$ denotes the fluid velocity, $B$ denotes the magnetic field, $\nu$ denotes the kinematic viscosity, $[B,v]$ denotes the Lie-bracket of $B$ and $v$, $\nabla_vv$ denotes the co-variant derivative of $v$ w.r.t. $v$, or equivalently $\nabla_vv=(Dv)\cdot v$ where $Dv$ denotes the Jacobian of $v$ and we impose tangent boundary conditions on $B$ and the no-slip boundary condition on the fluid velocity $v$. One important observation is that the equation $\partial_tB=[B,v]$ implies, \cite[Porposition 22.15]{L12},
\begin{gather}
	\label{S1E2}
	B=(\psi_t)_*B_0
\end{gather}
where $\psi_t$ denotes the flow of the (time-dependent) velocity field $v$, $(\psi_t)_*$ denotes the pushforward by $\psi_t$ and $B_0(x):=B(0,x)$ is the initial magnetic field. Since $v$ is div-free, (\ref{S1E1}), its flow $\psi_t$ is volume-preserving so that (\ref{S1E2}) tells us that the magnetic field $B$ at time $t$ can be obtained from $B_0$ by means of the action of an appropriate volume-preserving diffeomorphism on the initial field $B_0$.

Before we come to an implication of this observation we introduce the concept of helicity. The helicity of a vector field $B$ which is div-free and tangent to the boundary of a domain $\Omega$, can be defined as follows
\begin{gather}
	\label{S1E3}
	\mathcal{H}(B):=\frac{1}{4\pi}\int_{\Omega\times\Omega}B(x)\cdot\left(B(y)\times \frac{x-y}{|x-y|^3}\right)d^3yd^3x.
\end{gather}
Of importance in this context is the Biot-Savart operator, which can be defined as follows
\begin{gather}
	\label{S1E4}
	\operatorname{BS}(B)(x):=\frac{1}{4\pi}\int_{\Omega}B(y)\times \frac{x-y}{|x-y|^3}d^3y.
\end{gather}
With this definition, (\ref{S1E3}) may be equivalently expressed as $\mathcal{H}(B)=\langle B,\operatorname{BS}(B)\rangle_{L^2(\Omega)}\equiv \int_{\Omega}B(x)\cdot \operatorname{BS}(B)(x)d^3x$. The key observation is that helicity is preserved by the action of volume-preserving diffeomorphisms which are connected to the identity, cf. \cite[Theorem A]{CDGT002}, \cite[Proof of Corollary 2.7]{G24OptimalHelicityDomain}, i.e. if $(\psi_t)_{0\leq t\leq 1}$ is a smooth family of volume-preserving diffeomorphisms with $\psi_0=\operatorname{Id}$ and $B_0$ is a time-independent, div-free field, tangent to $\partial\Omega$, then
\begin{gather}
	\label{S1E5}
	\mathcal{H}((\psi_t)_*B_0)=\mathcal{H}(B_0)\text{ for all }t.
\end{gather}
This, in combination with (\ref{S1E2}) implies that helicity is a conserved quantity in the realm of ideal magnetohydrodynamics. Physically helicity may be regarded as a measure of the average linking of distinct field lines of the underlying magnetic field, \cite{Mo69},\cite{Arnold2014},\cite{V03}.

Further, helicity provides an obstruction to energy dissipation in the sense that helicity serves as a lower bound for the magnetic energy, cf. \cite[Theorem B]{CDG00},
\begin{gather}
	\label{S1E6}
	|\mathcal{H}(B)|\leq \sqrt[3]{\frac{3}{4\pi}}(\operatorname{vol}(\Omega))^{\frac{1}{3}}\|B\|^2_{L^2(\Omega)}.
\end{gather}
Exploiting Sobolev inequalities one can obtain the tighter bound, \cite[Equation (1.9)]{FrHe91I},
\begin{gather}
	\label{S1E7}
	|\mathcal{H}(B)|\leq \sqrt[3]{\frac{\pi}{16}}(\operatorname{vol}(\Omega))^{\frac{1}{3}}\|B\|^2_{L^2(\Omega)}.
\end{gather}
It was further informally argued by Woltjer, \cite{W58}, that a limit configuration $B_{\infty}=\lim_{t\rightarrow\infty}B(t,\cdot)$ should correspond to a magnetic field of minimal magnetic energy under a fixed helicity constraint.

One problem of interest in ideal magnetohydrodynamics is therefore to identify all those stationary magnetic fields $B$ which, in a fixed helicity class $\{\mathcal{H}=h\}$, $h\in \mathbb{R}$, minimise the magnetic energy, i.e. the square of the $L^2$-norm. This is a classical topic and was investigated in different settings, \cite{W58},\cite{Arnold2014},\cite{AL91},\cite{G20}. It is standard that minimising the magnetic energy in a fixed (positive) helicity class and maximising helicity in a fixed energy class are equivalent problems. The helicity maximisation problem has been investigated in \cite[Section 9]{CDG00} where it was shown that a div-free field $B$ tangent to $\partial\Omega$ maximises $\mathcal{H}(B)$ in its own energy-class if and only if $B$ is the eigenfield to the largest positive eigenvalue of a modified Biot-Savart operator.

More precisely, if we let
\begin{gather}
	\label{S1E8}
	\pi:L^2(\Omega,\mathbb{R}^3)\rightarrow L^2\mathcal{V}_0(\Omega):=\{B\in L^2(\Omega,\mathbb{R}^3)\mid \operatorname{div}(B)=0=\mathcal{N}\cdot B\}
\end{gather}
denote the $L^2$-orthogonal projection from the space of square-integrable vector fields onto the space of div-free fields which are tangent to the boundary (here $\mathcal{N}$ denotes the outward unit normal), then
\begin{gather}
	\label{S1E9}
	\operatorname{BS}^{\prime}:= \pi\circ \operatorname{BS}:L^2\mathcal{V}_0(\Omega)\rightarrow L^2\mathcal{V}_0(\Omega) 
\end{gather}
defines a self-adjoint, compact, linear operator \cite[Theorem 9.1]{CDG00} which admits a discrete spectrum which accumulates only at $0$ and therefore admits a largest positive and smallest negative eigenvalue. The corresponding eigenfields are all smooth vector fields. As a consequence of this we find that
\begin{gather}
	\label{S1E10}
	\sup_{B\in L^2\mathcal{V}_0(\Omega)\setminus\{0\}}\frac{\mathcal{H}(B)}{\|B\|^2_{L^2(\Omega)}}=\lambda_+(\Omega)
\end{gather}
where $\lambda_+(\Omega)>0$ denotes the largest positive eigenvalue of the modified Biot-Savart operator $\operatorname{BS}^{\prime}$.
We recall that helicity $\mathcal{H}$ provides a measure of entanglement of distinct field lines and so high helicity values, and in turn high $\lambda_+(\Omega)$ values, may be regarded as a measure of topological stability of the underlying eigenfields. It further follows from (\ref{S1E2}) that the magnetic field is "frozen into the fluid" in the sense that every fluid particle which starts at a specific field line of $B$ at time $t=0$ remains on that field line for all times, a phenomenon known as Alfv\'{e}n's theorem, \cite{A42}. Therefore, the quantity $\lambda_+(\Omega)$ may be regarded as a simplified, idealised measure for the highest topological stability that any plasma configuration which is supported on $\Omega$ may achieve.

Hence, it is of interest to look for a domain $\Omega$ which maximises the quantity $\lambda_+(\Omega)$. It turns out that this is an ill-posed problem since by scaling the domain $\Omega$ one can let $\lambda_+(\Omega)$ become arbitrarily large. From a more practical point of view it is sensible to restrict attention to domains which satisfies a certain "finiteness" constraint.

The main observation in this regard are inequalities (\ref{S1E6}),(\ref{S1E7}) which tell us that $\lambda_+(\Omega)$ is bounded away from infinity as long as we fix its volume. We are therefore interested in the following iso-volumetric problem, for fixed $V>0$,
\begin{gather}
	\label{S1E11}
	\text{find }\sup_{\operatorname{vol}(\Omega)=V}\lambda_+(\Omega)
\end{gather}
where the supremum is taken among bounded smooth domains of fixed volume. This problem has been initiated in \cite{CDGT002} with the following findings
\begin{thm}[{\cite[Theorem D]{CDGT002}}]
	\label{S1T1}
	Let $\Omega\subset\mathbb{R}^3$ be a bounded smooth domain of volume $V>0$. If $\Omega$ attains the supremum in (\ref{S1E11}) among all other bounded smooth domains $\widetilde{\Omega}$ of the same volume, then the following holds
	\begin{enumerate}
		\item Every helicity maximising eigenfield $B$ of the modified Biot-Savart operator $\operatorname{BS}^{\prime}$ on $\Omega$ is of constant, non-zero, speed on the boundary, i.e. $|B||_{\partial\Omega}=\text{const}>0$.
		\item All connected components of $\partial\Omega$ are tori.
		\item The field lines of every helicity maximising eigenfield $B$ of the modified Biot-Savart operator on $\Omega$ starting at a boundary point are geodesics on $\partial\Omega$.
	\end{enumerate}
\end{thm}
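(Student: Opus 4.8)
The plan is to differentiate $\lambda_+(\Omega_t)$ along volume-preserving deformations $\Omega_t=\Phi_t(\Omega)$ of the optimal domain, exploiting that, by (\ref{S1E5}), pushing a fixed eigenfield forward by the flow leaves its helicity unchanged while only its $L^2$-norm varies. Fix a helicity maximising eigenfield $B$, so $\operatorname{BS}^{\prime}(B)=\lambda_+B$ with $\lambda_+=\lambda_+(\Omega)>0$. Two structural facts come first. Writing $\operatorname{BS}(B)=\operatorname{curl}(A)$ with $A$ the Newtonian potential of $B$, one has $-\Delta A=B$ in $\Omega$, and since $\operatorname{div}(B)=0$ and $B\cdot\mathcal{N}|_{\partial\Omega}=0$ also $\operatorname{div}(A)=0$, hence $\operatorname{curl}(\operatorname{BS}(B))=\operatorname{curl}(\operatorname{curl}(A))=B$ in $\Omega$. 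As the $L^2$-orthogonal complement of $L^2\mathcal{V}_0(\Omega)$ consists of gradients, $\operatorname{BS}(B)-\lambda_+B=\operatorname{BS}(B)-\operatorname{BS}^{\prime}(B)=\nabla\varphi$ for some $\varphi$, and taking the curl gives the Beltrami identity $\operatorname{curl}(B)=\lambda_+^{-1}B$. Also $\mathcal{H}(B)=\langle B,\operatorname{BS}(B)\rangle_{L^2}=\langle B,\operatorname{BS}^{\prime}(B)\rangle_{L^2}=\lambda_+\|B\|_{L^2(\Omega)}^2>0$.

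Now let $X$ be a smooth divergence-free vector field defined near $\overline{\Omega}$, let $\Phi_t$ be its flow and $\Omega_t:=\Phi_t(\Omega)$; then $\operatorname{vol}(\Omega_t)=V$, $\Phi_t|_\Omega$ is a smooth family of volume-preserving diffeomorphisms with $\Phi_0=\operatorname{Id}$, and $(\Phi_t)_*B\in L^2\mathcal{V}_0(\Omega_t)$. By (\ref{S1E5}), (\ref{S1E10}) and the optimality of $\Omega$,
\[
\frac{\mathcal{H}(B)}{g(t)}=\frac{\mathcal{H}((\Phi_t)_*B)}{\|(\Phi_t)_*B\|_{L^2(\Omega_t)}^2}\le\lambda_+(\Omega_t)\le\lambda_+(\Omega)=\frac{\mathcal{H}(B)}{g(0)},\qquad g(t):=\|(\Phi_t)_*B\|_{L^2(\Omega_t)}^2 ,
\]
so $g(t)\ge g(0)$ near $t=0$, and as $g$ is smooth this forces $g^{\prime}(0)=0$. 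Changing variables and using that $\Phi_t$ preserves volume together with $((\Phi_t)_*B)\circ\Phi_t=(D\Phi_t)B$ gives $g(t)=\int_\Omega|(D\Phi_t)B|^2\,dx$, hence $g^{\prime}(0)=2\int_\Omega(DX\cdot B)\cdot B\,dx$. One integration by parts (the boundary term vanishes since $B\cdot\mathcal{N}|_{\partial\Omega}=0$) turns this into $-2\int_\Omega X\cdot(B\cdot\nabla)B\,dx$; the Beltrami identity makes $(B\cdot\nabla)B=\nabla(\tfrac{1}{2}|B|^2)$, and a second integration by parts using $\operatorname{div}(X)=0$ yields $g^{\prime}(0)=-\int_{\partial\Omega}(X\cdot\mathcal{N})|B|^2\,dS$. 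Since the normal trace $X\cdot\mathcal{N}|_{\partial\Omega}$ of a divergence-free field defined near $\overline{\Omega}$ can be an arbitrary smooth function of vanishing mean, the vanishing of $g^{\prime}(0)$ for all such $X$ forces $|B|^2$ to be constant on $\partial\Omega$.

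It remains to rule out that this constant is zero, which gives (i), and then (ii) and (iii) follow quickly. If $B$ vanished on $\partial\Omega$, then all its tangential derivatives vanish there; combining this with $\operatorname{div}(B)=0$ and $\operatorname{curl}(B)=\lambda_+^{-1}B=0$ on $\partial\Omega$ forces the normal derivative of $B$ to vanish on $\partial\Omega$ as well, and then $B$, solving the elliptic system $\Delta B=-\lambda_+^{-2}B$ with vanishing Cauchy data, would vanish identically by unique continuation, contradicting $\mathcal{H}(B)>0$. For (iii): an integral curve $\gamma$ of $B$ through a boundary point stays on $\partial\Omega$ (as $B$ is tangent), and its ambient acceleration equals $((B\cdot\nabla)B)(\gamma)=\nabla(\tfrac{1}{2}|B|^2)(\gamma)$, which is normal to $\partial\Omega$ because $|B|^2$ is constant there; hence $\ddot\gamma\perp\partial\Omega$ and $\gamma$ is a geodesic. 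For (ii): $B|_{\partial\Omega}$ is a nowhere-vanishing tangent vector field on the closed orientable surface $\partial\Omega$, so each of its components has vanishing Euler characteristic by the Poincar\'{e}--Hopf theorem and is therefore a torus.

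The displayed computations are routine once the set-up is in place; the two points requiring care are the realisation of an arbitrary mean-zero normal trace by a divergence-free field near $\overline{\Omega}$ (classical, via solvability of the Neumann problem / existence of vector potentials) and the unique-continuation step excluding $|B|\equiv 0$ on $\partial\Omega$. Conceptually the one subtlety is that the argument uses only the \emph{one-sided} bound $\lambda_+(\Omega_t)\ge\mathcal{H}((\Phi_t)_*B)/\|(\Phi_t)_*B\|_{L^2(\Omega_t)}^2$ rather than a full shape-derivative formula for $\lambda_+$; this is precisely what lets the conclusion hold for every top eigenfield with no simplicity hypothesis on $\lambda_+(\Omega)$, since by (\ref{S1E5}) the numerator of that quotient is literally independent of $t$.
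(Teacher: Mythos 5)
This theorem is quoted from \cite[Theorem D]{CDGT002} and is not proved in the present paper, so there is no in-paper argument to compare against; your reconstruction is, however, essentially the original first-variation proof of Cantarella--DeTurck--Gluck--Teytel: deform by a volume-preserving flow, use invariance of helicity under the pushforward so that only the energy varies, and read off $g^{\prime}(0)=-\int_{\partial\Omega}(X\cdot\mathcal{N})|B|^2\,dS$ from the Beltrami identity. The argument is correct, including the two points you flag (realising an arbitrary mean-zero normal trace by a divergence-free field on a \emph{neighbourhood} of $\overline{\Omega}$ --- which for multiply connected boundaries needs both surface curls and point-source gradients placed in the bounded complementary components --- and the unique-continuation step), and your closing remark about the one-sided Rayleigh-quotient bound is exactly why no simplicity assumption on $\lambda_+(\Omega)$ is needed.
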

The existence of optimal domains for (\ref{S1E11}) remains an open problem, but some partial results have been obtained. In \cite[Theorem 1.1]{EGPS23} it was shown that among all bounded, convex domains of fixed volume there exists a bounded convex domain which realises the supremum in (\ref{S1E11}). In addition, it was shown in \cite[Corollary 2.7]{G24OptimalHelicityDomain} that among domains satisfying a uniform ball condition and a volume constraint, there exists a domain which realises the supremum in (\ref{S1E11}).

We observe that \Cref{S1T1} provides topological conditions on optimal domains, but no geometric conditions are derived. In particular, any solid torus $S^1\times D^2\cong \Omega\subset\mathbb{R}^3$ of prescribed volume can, a priori, be an optimal domain.

The goal of the present work is to derive conditions concerning the geometry of optimal domain. More precisely, we show here (modulo some technical assumption) that if $\Omega$ is a solid torus of prescribed volume which achieves the supremum in (\ref{S1E11}) among all other bounded smooth domains of the same volume, then $\Omega$ must satisfy certain geometric conditions, see \Cref{S2T1}-\Cref{S2T3}.
\newline
\newline
Let us shortly comment on the approach that we take here. The vector fields $\operatorname{BS}(B)$ and $\operatorname{BS}^{\prime}(B)$ differ only by a gradient field \cite[Section 7]{C99} so that if $B$ is an eigenfield of $\operatorname{BS}^{\prime}$ corresponding to some $\lambda\neq 0$ we get
\begin{gather}
	\nonumber
	\lambda B=\operatorname{BS}^{\prime}(B)\Rightarrow \lambda \operatorname{curl}(B)=\operatorname{curl}\left(\operatorname{BS}^{\prime}(B)\right)=\operatorname{curl}(\operatorname{BS}(B))=B
\end{gather}
where we used in the last step that $B$ is div-free and tangent to $\partial\Omega$, see \cite[Theorem A]{CDG01}. This implies that $B$ is in particular div-free and a curl-eigenfield.

We hence deal with the question whether a curl eigenvalue $\mu$ which corresponds to a vector field $B$ on a domain $\Omega$ with $\operatorname{div}(B)=0$, $\operatorname{curl}(B)=\mu B$ for some $\mu\in \mathbb{R}$, $B\parallel \partial\Omega$ and $|B||_{\partial\Omega}=$const.$\neq 0$ can be controlled in terms of the geometry of the underlying domain $\Omega$. If such a result is available one can show by comparison with the eigenvalue of an Euclidean ball of the same volume (whose eigenvalues are explicitly known \cite[Theorem A]{CDGT00}) that optimal domains must satisfy certain geometric constraints.

The case $\mu=0$ is special and we discuss it in more detail \Cref{AppS1}. In this situation $B$ is a harmonic Neumann field, a finite dimensional vector space whose dimension coincides with the dimension of the first de Rham cohomology group of $\Omega$, \cite[Definition 2.2.1, Theorem 2.6.1]{S95}. It turns out that if $B$ is a harmonic Neumann field on a bounded, smooth Euclidean domain $\Omega\subset\mathbb{R}^3$ and of constant speed on the boundary, then it must be identically zero on all of $\Omega$. The argument is of Bochner-type (and thus extends to ambient spaces other than $\mathbb{R}^3$ as long as they satisfy certain geometric conditions) and turns out to fail as soon as $\mu\neq 0$.

In the case $\mu\neq 0$ we are not able to exclude the existence of such fields $B$ altogether, but we derive some bounds on $\mu$ in terms of the geometry of the underlying domain $\Omega$, see \Cref{S3C6}. The argument will be entirely different from the Bochner-type argument in the case $\mu=0$. For this new argument to work we require an additional condition on the field $B$. This condition will be automatically satisfied whenever $B$ is an eigenfield of the modified Biot-Savart operator, so that imposing this additional property will not be a restriction when it comes to dealing with our shape optimisation problem, cf. \Cref{S3L3} for the precise conditions which we impose on our eigenfield $B$.
\newline
\newline
Before we come to the main results of this manuscript, let us point out that recently a related shape optimisation problem has been studied in the literature which includes an additional constraint on the vector fields $B$ under consideration. More precisely, if we define
\begin{gather}
	\label{S1E12}
	\mathcal{K}(\Omega):=\{B\in L^2\mathcal{V}_0(\Omega)\mid \langle B,X\rangle_{L^2(\Omega)}=0\text{ for all }X\in L^2(\Omega,\mathbb{R}^3)\text{ with }\operatorname{curl}(X)=0\},
\end{gather}
then the curl operator becomes a self-adjoint operator with compact inverse on the domain $D_{\Omega}=\{B\in \mathcal{K}(\Omega)\mid \operatorname{curl}(B)\in\mathcal{K}(\Omega)\}$, cf. \cite{YG90}. In particular it admits a smallest positive eigenvalue $\mu_+(\Omega)>0$ and largest negative eigenvalue $\mu_-(\Omega)<0$. We have already seen that if $B\in L^2\mathcal{V}_0(\Omega)$ is an eigenfield of $\operatorname{BS}^{\prime}$ with eigenvalue $\lambda\neq 0$, then it satisfies $\operatorname{curl}(B)=\frac{B}{\lambda}$ and thus is a curl eigenfield of eigenvalue $\frac{1}{\lambda}$. We notice however that a priori eigenfields of $\operatorname{BS}^{\prime}$ need not be elements of $\mathcal{K}(\Omega)$, (\ref{S1E12}), and therefore the eigenvalues of curl with domain $D_{\Omega}$ do not coincide with the multiplicative inverses of the eigenvalues of $\operatorname{BS}^{\prime}$. We however have the following characterisation of $\mu_+(\Omega)$, \cite[Theorem 2.1]{G20},
\begin{gather}
	\label{S1E13}
	\frac{1}{\mu_+(\Omega)}=\sup_{B\in \mathcal{K}(\Omega)\setminus\{0\}}\frac{\mathcal{H}(B)}{\|B\|^2_{L^2(\Omega)}} 
\end{gather}
from which we deduce, (\ref{S1E10}), $\lambda_+(\Omega)\geq \frac{1}{\mu_+(\Omega)}$. This, in combination with (\ref{S1E7}), shows further that $\mu_+(\Omega)$ is bounded below in terms of the volume of $\Omega$ alone. It is thus natural to consider the corresponding spectral shape optimisation problem of minimising $\mu_+(\Omega)$ among bounded smooth domains of prescribed volume. This spectral iso-volumetric problem has been independently initiated in \cite[Section 2]{G20Diss} and \cite{EPS23}. Interestingly, the analogue of \Cref{S1T1} remains valid in this setting, i.e. if $\Omega$ is a bounded smooth domain which minimises $\mu_+(\Omega)$ among all other bounded smooth domains of the same volume as $\Omega$, then every curl eigenfield $B\in D_{\Omega}$ which corresponds to the eigenvalue $\mu_+(\Omega)$ is of non-zero constant speed on the boundary, its boundary orbits are geodesics on the boundary and the boundary components of $\partial\Omega$ are tori, cf. \cite[Proposition 2.2.4]{G20Diss}, \cite[Proposition 2.1, Corollary 2.3]{EPS23}.

Beyond these topological restrictions in the spirit of \Cref{S1T1} some additional geometric constrains on optimal domains for the iso-volumetric spectral problem were derived in \cite[Theorem 1.2, Corollary 1.3]{EPS23} and generalised in \cite[Theorem 2.4, Theorem 2.6]{G23}. It is in particular shown, \cite[Theorem 1.2]{EPS23}, that if $\Omega$ is rotationally symmetric with a connected boundary, then for "generic" cross-sections, generating $\Omega$, the domain $\Omega$ cannot minimise $\mu_+$ among all other bounded smooth domains of the same volume. The argument hinges crucially on two facts:
\begin{enumerate}
	\item Every first eigenfield $B$ is $L^2(\Omega)$-orthogonal to the curl-free fields since $B\in D_{\Omega}\subset \mathcal{K}(\Omega)$,
	\item We have an explicit formula for a non-zero element of the space of harmonic Neumann fields $\mathcal{H}_N(\Omega):=\{\Gamma\in L^2\mathcal{V}_0(\Omega)\mid \operatorname{curl}(\Gamma)=0\}$ whose Euclidean norm can be directly linked to geometric properties of the the domain $\Omega$.
\end{enumerate}
The generalisations obtained in \cite[Theorem 2.6]{G23} exploit the same underlying principles. These arguments break down entirely, if we instead consider domains maximising $\lambda_+$ since in this case the corresponding eigenfields of $\operatorname{BS}^{\prime}$ are not $L^2(\Omega)$-orthogonal to the curl-free fields and as such the arguments used in \cite{EPS23},\cite{G23} cannot be used to derive any geometric restrictions for optimal domains pertaining $\lambda_+$.

Therefore, all the results we obtain here regarding the geometric restrictions on optimal domains of $\lambda_+$ are novel, even in the axi-symmetric situation. It is further interesting to observe, that the arguments which we provide here to establish geometric restrictions for optimal domains of $\lambda_+$ rely not only on the fact that the eigenfields $B$ of $\operatorname{BS}^{\prime}$ corresponding to $\lambda_+$ are of constant speed on the boundary and that $\operatorname{curl}(B)$ is a multiple of $B$, but also on the fact that such eigenfields have zero toroidal circulation along the boundary of $\Omega$. In contrast, eigenfields of $\operatorname{curl}$ within $D_{\Omega}$ corresponding to $\mu_+(\Omega)$ need not satisfy this additional condition so that all the geometric restrictions which are obtained in the present work for optimal domains for $\lambda_+$ need not hold for optimal domains for the first eigenvalue $\mu_+$.

One may of course wonder whether the fact that one has to make a distinction between these two optimisation problems to derive geometric restrictions in either case are simply an artefact of the methods that are employed or whether they are of fundamental nature. More specifically one may wonder if these two optimisation problems are truly distinct in the following sense:

We have seen already that $\lambda_+(\Omega)\geq \frac{1}{\mu_+(\Omega)}$ and so if we let $\operatorname{Sub}_c(\mathbb{R}^3)$ denote the collection of all bounded, smooth domains in $\mathbb{R}^3$ and define 
\begin{gather}
	\nonumber
	\Lambda(V):=\underset{\substack{\Omega\in \operatorname{Sub}_c(\mathbb{R}^3)\\ \operatorname{vol}(\Omega)=V}}{\sup}\lambda_+(\Omega)\text{ and }\mathcal{M}(V):=\underset{\substack{\Omega\in \operatorname{Sub}_c(\mathbb{R}^3)\\ \operatorname{vol}(\Omega)=V}}{\inf}\mu_+(\Omega)
\end{gather}
we find $\Lambda(V)\geq \mathcal{M}^{-1}(V)$. The question then becomes
\begin{quest}
	\label{S1Q2}
	Is it true that for all $V>0$ we have $\Lambda(V)>\mathcal{M}^{-1}(V)$?
	\end{quest}
	We notice that due to scaling properties of the functional involved, \cite[Lemma 4.3]{EGPS23}, \Cref{S1Q2} has a positive answer to some $V>0$ if and only if the answer is positive for all $V>0$.
	
	We shall see that if $\mu_+$ admits a global minimiser, then the answer to \Cref{S1Q2} must be positive so that either these two optimisation problems are truly distinct, or otherwise $\mu_+$ does not admit any smooth minimisers.
	
	Our main insight regarding \Cref{S1Q2} can hence be stated as follows
	\begin{thm}
		\label{S1T3}
		If $\Lambda(V)=\mathcal{M}^{-1}(V)$ for some (and hence every) $V>0$, then there does not exist any bounded, smooth domain $\Omega\subset\mathbb{R}^3$ of volume $V$ with $\mu_+(\Omega)=\mathcal{M}(V)$.
	\end{thm}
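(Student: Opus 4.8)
\noindent\emph{Sketch of the proof I would give.}

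The plan is to argue by contradiction: assume $\Lambda(V)=\mathcal{M}^{-1}(V)$ and that some bounded smooth domain $\Omega$ with $\operatorname{vol}(\Omega)=V$ satisfies $\mu_+(\Omega)=\mathcal{M}(V)$. Since $\mathcal{K}(\Omega)\subset L^2\mathcal{V}_0(\Omega)$, formulas (\ref{S1E10}) and (\ref{S1E13}) give $\lambda_+(\Omega)\geq 1/\mu_+(\Omega)$, while by the choice of $\Omega$ we have $\lambda_+(\Omega)\leq\Lambda(V)=\mathcal{M}^{-1}(V)=1/\mu_+(\Omega)$. Hence $\lambda_+(\Omega)=1/\mu_+(\Omega)$ and the two suprema in (\ref{S1E10}) and (\ref{S1E13}) coincide. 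Let $B\in D_\Omega$ be a first curl eigenfield, $\operatorname{curl}(B)=\mu_+(\Omega)B$. Writing $\operatorname{BS}(B)=\mu_+(\Omega)^{-1}B+Y$ with $\operatorname{curl}(Y)=0$ (legitimate since both sides have curl equal to $B$, as $B$ is div-free and tangent, see \cite[Theorem A]{CDG01}) and using $B\perp Y$ in $L^2(\Omega)$ because $B\in\mathcal{K}(\Omega)$, one gets $\mathcal{H}(B)=\langle B,\operatorname{BS}(B)\rangle_{L^2(\Omega)}=\mu_+(\Omega)^{-1}\|B\|^2_{L^2(\Omega)}$, so $B$ realises the supremum in (\ref{S1E13}), hence also that in (\ref{S1E10}). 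By the characterisation of helicity maximisers recalled in the introduction, $B$ is therefore an eigenfield of $\operatorname{BS}^{\prime}$ to the eigenvalue $\lambda_+(\Omega)$.

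Next I would extract two independent restrictions on the boundary behaviour of this field $B$. Since $\Omega$ attains $\mathcal{M}(V)$, the analogue of \Cref{S1T1} for $\mu_+$ recalled above shows that every component of $\partial\Omega$ is a torus and that $|B|$ is a positive constant on $\partial\Omega$. Moreover $B\in D_\Omega\subset\mathcal{K}(\Omega)$ is $L^2$-orthogonal to every curl-free field, in particular to every harmonic Neumann field $\Gamma\in\mathcal{H}_N(\Omega)$; integrating by parts and using $\operatorname{curl}(B)=\mu_+(\Omega)B$ one finds $0=\langle B,\Gamma\rangle_{L^2(\Omega)}=\mu_+(\Omega)^{-1}\int_{\partial\Omega}(B\times\Gamma)\cdot\mathcal{N}$, i.e. the intersection pairing on $\partial\Omega$ of the boundary cohomology classes of $B$ and $\Gamma$ vanishes. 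As the classes of the $\Gamma|_{\partial\Omega}$ span a Lagrangian subspace of $H^1(\partial\Omega;\mathbb{R})$ for this pairing, this forces the ``poloidal'' periods of $B|_{\partial\Omega}$ to vanish. On the other hand, because $B$ is an eigenfield of $\operatorname{BS}^{\prime}$, its vector potential $\operatorname{BS}(B)$ is curl-free and decaying on $\mathbb{R}^3\setminus\overline{\Omega}$, hence has zero circulation along any ``toroidal'' loop running in the exterior of $\Omega$; such a loop is homologous in $\overline{\mathbb{R}^3\setminus\Omega}$ to a boundary longitude, so the toroidal circulation of $\operatorname{BS}(B)$, and therefore of $B$ (the two differ by a gradient), along $\partial\Omega$ vanishes — this is precisely the zero toroidal circulation property of $\operatorname{BS}^{\prime}$-eigenfields alluded to in the introduction.

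The contradiction then closes as follows. The $1$-form $B^{\flat}|_{\partial\Omega}$ is closed, since $d(B^{\flat})$ corresponds to $\operatorname{curl}(B)=\mu_+(\Omega)B$, which is tangent to $\partial\Omega$ and hence has trivial pullback to $\partial\Omega$. By the previous paragraph all periods of $B^{\flat}|_{\partial\Omega}$ over a homology basis of $\partial\Omega$ vanish, so $B^{\flat}|_{\partial\Omega}=df$ for some $f\in C^{\infty}(\partial\Omega)$; since $\partial\Omega$ is compact, $f$ has a critical point, where $B$ vanishes, contradicting $|B|\equiv\mathrm{const}>0$ on $\partial\Omega$. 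This rules out the existence of $\Omega$ and proves the theorem. (Equivalently, this shows that whenever $\mu_+$ possesses a smooth minimiser its first curl eigenfield has non-zero toroidal circulation, whereas a helicity maximising $\operatorname{BS}^{\prime}$-eigenfield never does, so $\lambda_+$ strictly exceeds $1/\mu_+$ there, giving a positive answer to \Cref{S1Q2}.)

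The step I expect to be the main obstacle is the bookkeeping in the second and third paragraphs when $\partial\Omega$ has several toroidal components: the statement ``vanishing poloidal periods plus vanishing toroidal periods exhaust $H^1(\partial\Omega;\mathbb{R})$'' is transparent only for a (possibly knotted) solid torus, where a meridian and a longitude form a basis of $H_1(\partial\Omega)$ and the above argument closes cleanly. I would deal with the general case by first showing that an eigenfield of $\operatorname{BS}^{\prime}$ of constant non-zero speed cannot live on a domain with disconnected boundary — an interior toroidal component bounds a solid torus inside $\mathbb{R}^3\setminus\Omega$, across whose meridian disk the vector potential also has vanishing circulation, which forces the field to vanish somewhere on that component — thereby reducing to the solid-torus case. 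The remaining ingredients (the precise integration-by-parts identity linking $\langle B,\Gamma\rangle_{L^2(\Omega)}$ to the cup product on $\partial\Omega$, and the self-adjointness/spectral facts for $\operatorname{curl}$ on $D_\Omega$ and for $\operatorname{BS}^{\prime}$) are routine and are quoted from the literature recalled in the introduction.
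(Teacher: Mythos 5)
Your overall strategy coincides with the paper's: assume a minimiser $\Omega$ of $\mu_+$ exists with $\Lambda(V)=\mathcal{M}^{-1}(V)$, deduce $\lambda_+(\Omega)=\Lambda(V)=1/\mu_+(\Omega)$, identify the first curl eigenfield $B\in\mathcal{K}(\Omega)$ as a $\lambda_+$-eigenfield of $\operatorname{BS}^{\prime}$, show that $B|_{\partial\Omega}$ is an exact $1$-form and hence vanishes at a critical point, and contradict the constant non-zero boundary speed guaranteed by optimality. Your first paragraph is a correct (and slightly more explicit) version of the paper's identification step, and your use of the intersection pairing with the Lagrangian subspace spanned by the classes $[\Gamma_i|_{\partial\Omega}]$ is exactly the content of the paper's (\ref{S3E1}) and (\ref{S3E6}). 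Where you genuinely diverge is in killing the remaining half of the periods: you integrate $\operatorname{BS}(B)$ over exterior-bounding (toroidal) loops and apply Stokes in $\mathbb{R}^3\setminus\overline{\Omega}$, whereas the paper (\Cref{S3L1}, (\ref{S3E5})) pairs $B|_{\partial\Omega}$ against the tangential traces $\operatorname{BS}(\Gamma_i)^{\parallel}$ and uses the self-adjointness of $\operatorname{BS}^{\prime}$. Both are valid; the paper's version has the advantage of being purely linear-algebraic on $\mathcal{H}(\partial\Omega)$ and of working for an arbitrary bounded smooth domain without any discussion of which loops bound where. (Minor slip: $\operatorname{BS}(B)$ and $B$ do not differ by a gradient; $\operatorname{BS}(B)=\lambda_+(\Omega)B+\nabla\phi$, so the circulations differ by the non-zero factor $\lambda_+(\Omega)$ — harmless, but worth stating correctly.)

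The one genuine gap is your proposed reduction to connected boundary. You claim that on an interior toroidal component the vanishing of the circulation of the vector potential over the meridian disc "forces the field to vanish somewhere on that component". It does not: a closed $1$-form on a torus with one vanishing period need not be exact (consider $d\theta$, which has zero meridian period), so no critical point is produced, and in any case not every inner component of $\partial\Omega$ bounds a solid torus in $\mathbb{R}^3\setminus\Omega$. Fortunately the reduction is unnecessary. Your own two mechanisms already suffice in general: by the half-lives-half-dies splitting of $H_1(\partial\Omega;\mathbb{R})$ induced by $\Omega$ and its complement, the classes of loops bounding in $\mathbb{R}^3\setminus\overline{\Omega}$ span a Lagrangian subspace on which your exterior Stokes argument kills the periods of $B$, while the condition $[B|_{\partial\Omega}]\in\operatorname{span}\{[\Gamma_i|_{\partial\Omega}]\}=\operatorname{Ann}\bigl(\ker(H_1(\partial\Omega)\to H_1(\Omega))\bigr)$ kills the periods over the complementary Lagrangian of loops bounding inside $\Omega$; together all periods vanish. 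Alternatively, simply quote the paper's \Cref{S3L1} and \Cref{S3C2}, which dispose of the general topology in one stroke.
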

	\textbf{Structure of the paper:} In \Cref{S21} we fix some notation which will be used throughout the paper. In \Cref{S22} we introduce the main results pertaining the geometric restrictions on domains maximising $\lambda_+$. In \Cref{S3} we provide proofs of the main results including a proof of \Cref{S1T3}. The appendix includes some auxiliary results.
\section{Main results}
\label{S2}
\subsection{Notation}
\label{S21}
Let throughout $\Omega\subset\mathbb{R}^3$ be a bounded smooth domain. 
\begin{itemize}
	\item We denote by $L^2\mathcal{V}_0(\Omega):=\{B\in L^2(\Omega,\mathbb{R}^3)\mid \operatorname{div}(B)=0=\mathcal{N}\cdot B\}$ where the div-free condition and tangency condition are understood in the weak sense, i.e. $B\in L^2\mathcal{V}_0(\Omega)\Leftrightarrow \int_{\Omega}B\cdot \nabla fd^3x=0$ for all $f\in H^1(\Omega)$.
	\item We let $\mathcal{K}(\Omega):=\{B\in L^2\mathcal{V}_0(\Omega)\mid \int_{\Omega}B\cdot Yd^3x=0\text{ for all }Y\in L^2(\Omega,\mathbb{R}^3)\text{ with }\operatorname{curl}(Y)=0\}$ where the curl-free condition is understood in the weak sense, i.e. $\int_{\Omega}Y\cdot \operatorname{curl}(\psi)d^3x=0$ for all $\psi\in C^{\infty}_c(\Omega,\mathbb{R}^3)$.
	\item We denote by $\mathcal{H}_N(\Omega)$ the space of harmonic Neumann fields, i.e. $\mathcal{H}_N(\Omega)=\{\Gamma\in L^2\mathcal{V}_0(\Omega)\mid \operatorname{curl}(\Gamma)=0\}$. It is well-known that $\mathcal{H}_N(\Omega)$ consists of elements which are smooth up to the boundary and that $\dim\left(\mathcal{H}_N(\Omega)\right)=\dim\left(H^1_{\operatorname{dR}}(\Omega)\right)$ where we denote by $H^k_{\operatorname{dR}}(\Omega)$ the $k$-th de Rham cohomology group of $\Omega$, cf. \cite[Theorem 2.2.7 \& Theorem 2.6.1]{S95}. Similarly, we denote by $\mathcal{H}(\partial\Omega)$ the space of harmonic fields on $\partial\Omega$, i.e. the space of smooth tangent vector fields on $\partial\Omega$ which are div- and curl-free on $\partial\Omega$.
	\item We denote by $\lambda_+(\Omega)>0$ the largest positive eigenvalue of the modified Biot-Savart operator as defined in (\ref{S1E9}). All eigenfields of $\operatorname{BS}^{\prime}$ are smooth, since they satisfy an elliptic equation, see (\ref{S3E4}), cf. \cite[Corollary 2.15]{ABDG98}.
	\item We denote by $\mu_+(\Omega)>0$, the smallest positive curl eigenvalue with domain $D_{\Omega}:=\{B\in \mathcal{K}(\Omega)\mid \operatorname{curl}(B)\in \mathcal{K}(\Omega)\}$. Similarly, all such eigenfields are smooth up to the boundary.
	\item We set $\operatorname{Sub}_c(\mathbb{R}^3):=\{\Omega\subset\mathbb{R}^3\mid\Omega\text{ is a bounded, smooth domain}\}$ and for given $V>0$ we let $\Lambda(V):=\underset{\substack{\Omega\in \operatorname{Sub}_c(\mathbb{R}^3)\\ \operatorname{vol}(\Omega)=V}}{\sup}\lambda_+(\Omega)$ and $\mathcal{M}(V):=\underset{\substack{\Omega\in \operatorname{Sub}_c(\mathbb{R}^3)\\ \operatorname{vol}(\Omega)=V}}{\inf}\mu_+(\Omega)$.
		\item We denote the surface gradient of a function $f\in C^{\infty}(\partial\Omega)$ either by $\nabla^{\partial\Omega}f$ or simply by $df$, where $\nabla^{\partial\Omega}f=\nabla \tilde{f}-(\mathcal{N}\cdot \nabla \tilde{f})\mathcal{N}$ where $\tilde{f}$ is any smooth extension of $f$ to $\overline{\Omega}$, $\mathcal{N}$ is the outward unit normal and $\nabla \tilde{f}$ is the standard Euclidean gradient.
		\item We say that $\Omega$ is a smooth solid torus if $\overline{\Omega}$ is a smooth embedding of $S^1\times D$ into $\mathbb{R}^3$ where $D$ is the (closed) unit disc in $\mathbb{R}^2$ and $S^1$ the unit circle.
		\item $|\Omega|$ and $|\partial\Omega|$ denote the volume of $\Omega$ and area of $\partial\Omega$ respectively.
		\item Let $\chi:S^1\rightarrow\mathbb{R}^3$ be a smooth embedding which we identify (with an abuse of notation) with a curve $\chi:[0,L]\rightarrow \mathbb{R}^3$ which we always assume to be arc-length parametrised (and thus $L=\mathcal{L}(\chi)$ is the length of $\chi$). We say that $\chi$ is regular, if $\chi^{\prime\prime}(s)\neq 0$ for all $0\leq s\leq L$.
		\item Let $\chi$ be a smooth regular curve and $\psi:S^1\times D\rightarrow \mathbb{R}^2$, $(t,x)\mapsto \psi(t,x)\equiv \psi_t(x)$ be a smooth family of diffeomorphisms which fixes the origin, i.e. $\psi_t(0)=0$ for all $t\in S^1$ and $\psi_t:D\rightarrow D_t\equiv \psi_t(D)\subset\mathbb{R}^2$ is a diffeomorphism onto its image for every $t\in S^1$. We can then write $\psi_t(x)=(\mu_t(x),\nu_t(x))$ for suitable functions $\mu_t,\nu_t$. We denote by $\{T,N,B\}$ the Frenet-Serret frame associated with $\chi$ and observe that
		\begin{gather}
			\label{S2E1}
			\Psi:S^1\times D\rightarrow \mathbb{R}^3\text{, }(s,x)\mapsto \chi(s)+\mu_s(x)N(s)+\nu_s(x)B(s)
		\end{gather}
		defines a diffeomorphism onto its image whenever $\sup_{t\in S^1}\operatorname{diam}(D_t)\ll 1$. More precisely this is the case as soon as $\sup_{t\in S^1}\operatorname{diam}(D_t)<\operatorname{reach}(\chi)$, where $\operatorname{reach}(\chi)$ can be defined as the largest number $\rho>0$ such that for all $x,y\in \chi([0,L])$, the disc of radius $\rho$ centred at $x$ and contained in the normal plane of $\chi$ at $x$ does not intersect the corresponding disc of radius $\rho$ at $y$. We call a smooth solid torus $\Omega\subset\mathbb{R}^3$ a regular solid torus with core orbit $\chi$ if $\Omega$ is the image of a map $\Psi$ as in (\ref{S2E1}) induced by $\chi$.
		\item We call a regular, smooth solid torus $\Omega\subset\mathbb{R}^3$ a tubular neighbourhood of radius $R$ around a core orbit $\chi$, if $\Psi$ in (\ref{S2E1}) can be taken to be $\mu_s(x_1,x_2)=R x_1$, $\nu_s(x_1,x_2)=R x_2$. In this context $\chi$ is usually referred to as a (smooth) knot and $\Omega$ is then a tubular neighbourhood of radius $R$ around the knot $\chi$.
\end{itemize}
\subsection{Main results}
\label{S22}
The first main result concerns rotationally symmetric solid tori. We say that $\Omega\subset\mathbb{R}^3$ is a standard rotationally solid torus of minor radius $0<r$, major radius $R>r$ and aspect ratio $a=\frac{R}{r}$ if
\begin{gather}
	\label{S2E2}
	\Omega=\left\{(x,y,z)\in \mathbb{R}^3 \mid (\sqrt{x^2+y^2}-R)^2+z^2\leq r^2\right\}.
\end{gather}
The following is our main result regarding axi-symmetric solid tori.
\begin{thm}
	\label{S2T1}
	Let $\Omega\subset\mathbb{R}^3$ be a smooth rotationally symmetric solid torus. Let further $d_->0$ denote the minimal distance of $\Omega$ to its axis of symmetry. If
	\begin{gather}
		\nonumber
		|\Omega|\leq d^3_-,
	\end{gather}
	then $\Omega$ does not maximise the largest positive Biot-Savart eigenvalue $\lambda_+$ among all other bounded smooth domains of the same volume and hence $\lambda_+(\Omega)<\Lambda(|\Omega|)$.
	
	In particular, if $\Omega$ is a standard rotationally symmetric solid torus of minor radius $0<r$, major radius $R>0$ and aspect ratio $a:=\frac{R}{r}$, then $\Omega$ does not maximise $\lambda_+$ whenever $a\geq 6$.
\end{thm}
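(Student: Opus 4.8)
The plan is to argue by contradiction, feeding the rigidity consequences of \Cref{S1T1} into a geometric lower bound for the associated curl eigenvalue and comparing the result against the explicitly known spectrum of a round ball. Suppose $\Omega$ is a smooth rotationally symmetric solid torus with $|\Omega|\le d_-^{3}$ which attains $\Lambda(|\Omega|)$, and let $B$ be a helicity-maximising eigenfield of $\operatorname{BS}'$ on $\Omega$. By \Cref{S1T1} the boundary $\partial\Omega$ is a torus and $|B|\equiv c_0>0$ on $\partial\Omega$, and, as recalled after \Cref{S1T1}, $B$ is then divergence-free, tangent to $\partial\Omega$, and satisfies $\operatorname{curl}(B)=\mu B$ with $\mu=\lambda_+(\Omega)^{-1}>0$; moreover, being an eigenfield of $\operatorname{BS}'$, $B$ has vanishing toroidal circulation along $\partial\Omega$, so $B$ satisfies all the hypotheses required to invoke the geometric bound \Cref{S3C6} (the precise list of conditions is collected in \Cref{S3L3}). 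The role of the rotational symmetry is to make the one-dimensional space $\mathcal H_N(\Omega)$ completely explicit: it is spanned by $\Gamma(x,y,z)=(x^2+y^2)^{-1}(-y,x,0)$, the gradient of the azimuthal angle, which is tangent to every rotationally symmetric surface, and since $\Omega$ stays at distance at least $d_-$ from the axis of symmetry we obtain the clean estimate $\|\Gamma\|_{L^2(\Omega)}^{2}=\int_\Omega(x^2+y^2)^{-1}\,d^3x\le d_-^{-2}|\Omega|$.

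The heart of the matter is to turn the curl-eigenfield relation into a lower bound for $\mu$. Integrating the identity $\operatorname{div}(B\times\Gamma)=\operatorname{curl}(B)\cdot\Gamma-B\cdot\operatorname{curl}(\Gamma)$ over $\Omega$ and using $\operatorname{curl}\Gamma=0$ gives
\begin{gather}
\nonumber
\mu\,\langle B,\Gamma\rangle_{L^2(\Omega)}=\int_{\partial\Omega}(B\times\Gamma)\cdot\mathcal N .
\end{gather}
Because $B$ and $\Gamma$ are both tangent to $\partial\Omega$, the integrand on the right is $\pm|\Gamma|$ times a tangential component of $B$, and $B|_{\partial\Omega}$ defines a closed $1$-form on the torus $\partial\Omega$, so the right-hand side is governed by the periods of $B|_{\partial\Omega}$; since $|B|\equiv c_0$ on $\partial\Omega$, the boundary orbits of $B$ are geodesics, and the toroidal period vanishes, this quantity is bounded below by a multiple of $c_0$ times an intrinsic geometric quantity of $\partial\Omega$. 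The left-hand side, on the other hand, is bounded above by $\|B\|_{L^2(\Omega)}\,\|\Gamma\|_{L^2(\Omega)}\le d_-^{-1}|\Omega|^{1/2}\,\|B\|_{L^2(\Omega)}$. Dividing, normalising $\|B\|_{L^2(\Omega)}=1$, and absorbing the remaining boundary quantities by isoperimetric-type control of $c_0$ and $|\partial\Omega|$ in terms of $|\Omega|$ — this is exactly the content of \Cref{S3C6} — one is led to an estimate $\lambda_+(\Omega)=\mu^{-1}\le C\,|\Omega|^{1/3}$, valid precisely in the regime $|\Omega|\le d_-^{3}$, with an explicit dimensional constant $C$.

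To close, I would compare with a round ball $\mathbb B\subset\mathbb R^3$ of volume $|\Omega|$: its spectrum is explicit (\cite[Theorem A]{CDGT00}), giving $\lambda_+(\mathbb B)=c_\ast|\Omega|^{1/3}$ for an explicit $c_\ast>0$, whence $\Lambda(|\Omega|)\ge\lambda_+(\mathbb B)=c_\ast|\Omega|^{1/3}$. Verifying the purely numerical inequality $C<c_\ast$ then yields $\lambda_+(\Omega)<\Lambda(|\Omega|)$, contradicting the assumed optimality of $\Omega$; hence $\Omega$ does not maximise $\lambda_+$, and, being a non-maximiser, it satisfies $\lambda_+(\Omega)<\Lambda(|\Omega|)$. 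For the standard rotationally symmetric torus with minor radius $r$, major radius $R$ and aspect ratio $a=R/r$ one has $d_-=R-r=(a-1)r$ and $|\Omega|=2\pi^2Rr^2=2\pi^2ar^3$, so the hypothesis $|\Omega|\le d_-^{3}$ reads $2\pi^2a\le(a-1)^3$; at $a=6$ this is $12\pi^2\approx118.4<125$, and since $3(a-1)^2-2\pi^2>0$ already at $a=6$ the function $(a-1)^3-2\pi^2a$ is increasing on $[6,\infty)$, so the inequality holds for all $a\ge6$, and the first part of the theorem applies.

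The step I expect to be the main obstacle is the penultimate one: producing a lower bound for $\mu$ (equivalently an upper bound for $\lambda_+(\Omega)$) that is strong enough that the constant $C$ it produces, under the constraint $|\Omega|\le d_-^{3}$, genuinely falls below the ball constant $c_\ast$. This forces one to identify the homological content of $\int_{\partial\Omega}(B\times\Gamma)\cdot\mathcal N$ — morally the poloidal circulation of $B$ around $\partial\Omega$, or equivalently the meridian flux of $B$ via the displayed identity — and to bound it below using \emph{only} that $B|_{\partial\Omega}$ is a constant-speed geodesic field with vanishing toroidal period, and then to track every constant faithfully through the isoperimetric reductions so that the final numerical comparison with the ball actually closes. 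One must also verify that eigenfields of $\operatorname{BS}'$ do have vanishing toroidal circulation, which is where the mapping properties of the Biot–Savart operator enter.
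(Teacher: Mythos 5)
There is a genuine gap, and it sits exactly where you flag it. Your skeleton (contradiction, the explicit harmonic Neumann field $\Gamma=\tfrac{(-y,x,0)}{x^2+y^2}$, the curl--eigenfield relation, the divergence identity relating $\mu\langle B,\Gamma\rangle_{L^2(\Omega)}$ to a boundary circulation, and the final comparison with the ball) matches the paper's strategy, and your reduction of the standard-torus case to $2\pi^2a\le(a-1)^3$ is correct. But the quantitative core is outsourced to \Cref{S3C6}, and that cannot work for two reasons. First, \Cref{S3C6} is stated for \emph{regular solid tori} and its constants require $1-\kappa_+\delta>0$, i.e.\ the cross-section diameter must be smaller than the radius of the core circle; \Cref{S2T1} carries no such hypothesis, so the corollary does not even apply to all the domains in question. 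Second, and more decisively, even where it applies it is too weak: the paper itself computes that the \Cref{S3C6}/\Cref{S2T2} route applied to the standard rotationally symmetric torus yields non-optimality only for $a\ge \tfrac{223}{2\pi}\approx 35.5$, whereas \Cref{S2T1} claims $a\ge 6$. So the constant $C$ your argument produces does not fall below the ball constant in the regime $|\Omega|\le d_-^3$, and the contradiction does not close.

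What the paper does instead, and what your write-up is missing, are three axisymmetry-specific sharpenings. (1) Using that boundary orbits of $B$ are geodesics together with uniqueness of geodesics, one shows $B$ has a \emph{closed poloidal} field line $\sigma_p=\partial\Sigma$, giving the exact identity $Lc=\operatorname{Flux}(B)/\lambda_+(\Omega)$ with $L=|\partial\Sigma|$ and $c=|B||_{\partial\Omega}$; this replaces your vague ``bounded below by a multiple of $c_0$ times an intrinsic geometric quantity''. (2) The Hodge component $\widetilde{\Gamma}$ of $B$ satisfies $|\operatorname{Flux}(B)|=\tfrac{1}{2\pi}\|\widetilde{\Gamma}\|_{L^2(\Omega)}\|\Gamma\|_{L^2(\Omega)}$ (via $\|\Gamma\|^2_{L^2(\Omega)}=2\pi\operatorname{Flux}(\Gamma)$), and crucially $\|\widetilde{\Gamma}\|_{L^2(\Omega)}\le\|B\|_{L^2(\Omega)}/\sqrt{2}$, which follows from $\mathcal{H}(\widetilde{\Gamma})=0$ (unlinked orbits of $\Gamma$) plus the optimality of $\Omega$ forcing $\lambda_+(\Omega)$ to be the largest eigenvalue in modulus; your Cauchy--Schwarz bound $\langle B,\Gamma\rangle\le\|B\|\|\Gamma\|$ loses this factor. (3) The sharper estimates $\|\Gamma\|^2_{L^2(\Omega)}\le 2\pi|\Sigma|/d_-$ (rather than your $|\Omega|/d_-^2$) and the planar isoperimetric inequality $\sqrt{|\Sigma|}/L\le(4\pi)^{-1/2}$ for the cross-section. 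Only with all three does one reach $\lambda_+(\Omega)/|\Omega|^{1/3}\le\tfrac{\sqrt{3}}{4\pi}|\Omega|^{1/6}d_-^{-1/2}$, which beats the ball value $(3/4\pi)^{1/3}/x_0$ precisely when $|\Omega|\le d_-^3$.
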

We notice that in contrast to the result in \cite{EPS23} which was concerned with the first curl eigenvalue $\mu_+$, we are not able to exclude the optimality of all rotationally symmetric domains with a convex cross section. But in turn our methods generalise to non-symmetric solid tori.
\newline
\newline
Before we state our main result regarding general regular solid tori, we state a more specialised result regarding tubular neighbourhoods of knots.

To this end we let for a given regular curve $\chi$, $\kappa(s)$ denote its curvature, $\tau(s)$ denote its torsion and if $\Omega$ is a tubular neighbourhood around $\chi$ of radius $R$ we further make the following definitions
\begin{gather}
	\label{S2E3}
	\kappa_+:=\max_{0\leq s\leq L}|\kappa(s)|\text{, }\tau_+:=\max_{0\leq s\leq L}|\tau(s)|\text{ and }\eta:=\sqrt{\frac{(1-\kappa_+R)^2}{(1-\kappa_+R)^2+R^2\tau^2_+}}.
\end{gather}
Our second main result can then be stated as follows
\begin{thm}
	\label{S2T2}
	Let $\chi$ be a regular smooth curve and let $\Omega\subset\mathbb{R}^3$ be a tubular neighbourhood of radius $R$ around $\chi$. If
	\begin{gather}
		\label{S2E4}
		\frac{L}{R}\geq \frac{223}{\eta^3}
	\end{gather}
	then $\Omega$ does not maximise $\lambda_+$ among all other bounded smooth domains of the same volume as $\Omega$, where $L$ denotes the length of $\chi$ and $\eta$ is defined in (\ref{S2E3}). In particular $\lambda_+(\Omega)<\Lambda(|\Omega|)$.
\end{thm}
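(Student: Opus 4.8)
The plan is to reduce this statement to the upper bound on curl eigenvalues obtained in Corollary 3.6 (referenced as \Cref{S3C6}), combined with a comparison against the Euclidean ball whose curl spectrum is explicitly known. First I would recall that if $\Omega$ maximises $\lambda_+$ among domains of volume $|\Omega|$, then by \Cref{S1T1} every helicity maximising eigenfield $B$ of $\operatorname{BS}'$ is of constant non-zero speed on $\partial\Omega$, and by the discussion in the introduction $B$ is div-free with $\operatorname{curl}(B)=\mu B$ where $\mu=1/\lambda_+(\Omega)$. Moreover, such an eigenfield satisfies the additional zero-toroidal-circulation condition on $\partial\Omega$ that is needed to invoke \Cref{S3C6} (this is exactly the point emphasised after \Cref{S1T3} and made precise in \Cref{S3L3}). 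Hence the geometric bound of \Cref{S3C6} applies and gives an inequality of the form $\mu \le f(\mathrm{geometry\ of\ }\Omega)$, i.e. a lower bound $\lambda_+(\Omega)=1/\mu \ge 1/f(\ldots)$ — wait, more precisely \Cref{S3C6} should bound $|\mu|$ from \emph{below} in terms of geometry (since a large curl eigenvalue corresponds to a small, tightly-wound domain), giving $\lambda_+(\Omega)$ an \emph{upper} bound $\Lambda$-free in terms of $L$, $R$ and $\eta$.

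The core of the argument is then a chain of two inequalities. On one side, optimality forces $\lambda_+(\Omega)=\Lambda(|\Omega|)$, and by comparison with a round ball $B_V$ of the same volume $V=|\Omega|$ — for which $\lambda_+(B_V)$ is computed from the first curl eigenvalue of the ball, known explicitly by \cite[Theorem A]{CDGT00} to scale like $c\, V^{-1/3}$ for an explicit constant $c$ — we get $\Lambda(V)\ge \lambda_+(B_V)= c\,V^{-1/3}$. On the other side, \Cref{S3C6} applied to the eigenfield $B$ on the tubular neighbourhood $\Omega$ yields $\lambda_+(\Omega)\le C\,(\text{length scale of }\Omega)$, where for a tube of radius $R$ around a curve of length $L$ one has $V=|\Omega|=\pi R^2 L\cdot(1+o(1))$ with correction controlled by curvature; the relevant length scale appearing in \Cref{S3C6} degrades by the factor $\eta^{-1}$ coming from the distortion of the Frenet tube (the $(1-\kappa_+R)$ and $R\tau_+$ terms in \eqref{S2E3}). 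Combining, optimality would force
\begin{gather}
	\nonumber
	c\,(\pi R^2 L)^{-1/3}\le \lambda_+(\Omega)\le C\,\frac{R}{\eta^{3}}\cdot\frac{1}{R^2}\,?
\end{gather}
— the precise powers must be tracked, but the upshot is that after solving for $L/R$ one obtains a threshold of the form $L/R \ge (\text{const})/\eta^3$, and the numerical constant works out to $223$ once all explicit constants (the ball's curl eigenvalue, the Sobolev-type constant in \Cref{S3C6}, and the volume-correction factors for the tube) are inserted. So the scheme is: (i) invoke \Cref{S1T1} to get constant speed; (ii) translate to a curl eigenproblem with $\mu=1/\lambda_+$; (iii) verify the hypotheses of \Cref{S3L3}/\Cref{S3C6} hold for $B$; (iv) apply \Cref{S3C6} to bound $\lambda_+(\Omega)$ above in terms of $L$, $R$, $\eta$; (v) apply the ball comparison to bound $\Lambda(|\Omega|)$ below; (vi) if $L/R\ge 223/\eta^3$ the two bounds contradict $\lambda_+(\Omega)=\Lambda(|\Omega|)$, so $\Omega$ is not optimal.

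The main obstacle, I expect, is step (iv): making \Cref{S3C6} — which is presumably stated for a general domain $\Omega$ in terms of intrinsic/extrinsic boundary quantities — yield a clean bound specifically in terms of $L$, $R$ and $\eta$ for a Frenet tube. This requires controlling the geometry of $\partial\Omega$ (its second fundamental form, the metric it inherits, the relation between the ambient normal and the Frenet normal/binormal) uniformly in terms of $\kappa_+$, $\tau_+$ and $R$, and then optimising the resulting estimate; the factor $\eta^{-3}$ and the explicit constant $223$ are artifacts of this bookkeeping. A secondary subtlety is ensuring the volume comparison with the ball is set up with the correct normalisation (the problem \eqref{S1E11} fixes volume, and one must use that $\lambda_+$ scales as (length)$^{1}$ under dilation, cf. \cite[Lemma 4.3]{EGPS23}), so that the ball lower bound $\Lambda(V)\ge \lambda_+(B_V)$ is applied at exactly the volume $V=|\Omega|$. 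Neither of these is conceptually hard, but both are where the explicit constant is lost or gained, so care is needed to land on the stated $223/\eta^3$.
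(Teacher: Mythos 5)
Your proposal follows essentially the same route as the paper's proof: constant boundary speed from \Cref{S1T1}, the translation to the curl eigenproblem $\operatorname{curl}(B)=\mu B$ with $\mu=1/\lambda_+(\Omega)$, verification of the zero toroidal circulation hypothesis of \Cref{S3L3} via Stokes' theorem applied to a surface outside $\overline{\Omega}$, the lower bound on $|\mu|$ from \Cref{S3C6}, and the comparison with the explicitly known ball eigenvalue $\lambda_+(B_r)=r/x_0$. The only slip is dimensional --- $\lambda_+(B_V)$ scales like $V^{+1/3}$, not $V^{-1/3}$ (it is the curl eigenvalue $\mu_+$ that scales like $V^{-1/3}$) --- but this does not affect the structure of the argument, which matches the paper's step for step.
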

We notice that the quotient $\rho(\Omega):=\frac{L}{R}$ is known as the rope length of $\Omega$ and that therefore (\ref{S2E4}) imposes a constraint on the rope length of $\Omega$ in terms of geometric properties of the core orbit. We observe that if $\chi$ is planar, i.e. contained in an Euclidean plane, then $\tau_+=0$ and hence $\eta=1$, so that in this case the condition reads $\rho(\Omega)\geq 223$. In addition, $\eta\rightarrow 1$ as $R\searrow 0$, while $\frac{L}{R}\rightarrow \infty$ as $R\searrow 0$. Hence, if the thickness $R$ of a tubular neighbourhood $\Omega$ around a regular smooth knot $\chi$ becomes too small, condition (\ref{S2E4}) is satisfied and hence $\Omega$ cannot maximise $\lambda_+$.

Moreover, the rope-length of knots has been investigated in detail in the past, see for example \cite{GMSvdM02},\cite{CKS02}, and in particular lower bounds exist on the rope length of a knot in terms of other knot theoretic quantities, cf. \cite[Theorem A]{CDG00},\cite[Theorem 1, Theorem 2 \& Corollay 2.1]{BS99}. Notably \cite[Corollary 2.1]{BS99} implies
\begin{gather}
	\label{S2E5}
	\rho(\Omega)\geq \max\left\{4\sqrt{\pi}\sqrt{\operatorname{Cr}([\chi])},\left(\frac{4\pi}{11}\operatorname{Cr}([\chi])\right)^{\frac{3}{4}}\right\}
\end{gather}
where $\operatorname{Cr}([\chi])$ denotes the crossing number of the knot $\chi$. We hence see that \Cref{S2T2} implies that if the the crossing number of a specific knot $\chi$ is large enough with respect to the maximal torsion of $\chi$ and the width $R$ of the tubular neighbourhood $\Omega$ around $\chi$, then $\Omega$ is not an optimal domain.

We further observe that we may apply \Cref{S2T2} to the standard rotationally symmetric torus of minor radius $r$, major radius $R$ and aspect ratio $a=\frac{R}{r}$ and find the condition $a\geq \frac{223}{2\pi}=35.49\dots>6$ which gives a strictly worse condition for non-optimality in comparison to \Cref{S2T1}. This is due to the fact that in the derivation of \Cref{S2T1} we explicitly exploit the axi-symmetry to derive the bound, while the construction in \Cref{S2T2} is more general and does not exploit any continuous symmetry and as a result is more general but gives worse bounds for axi-symmetric domains.
\newline
\newline
For our final main result we recall that by definition every regular solid torus is generated by a family of diffeomorphisms $\psi(s,\cdot)\equiv\psi_s:D\rightarrow \mathbb{R}^2$ and we defined $D_s:=\psi_s(D)$. We further equip the unit disc $D$ with polar coordinates $(\rho,\phi)$ and view in the following $\psi_s$ as a function of $(s,\rho,\phi)$. We then use the following additional notation for a given regular torus $\Omega\subset\mathbb{R}^3$ and smooth vector field $B$ on $\Omega$
\begin{gather}
	\label{S2E6}
	B_+:=\max_{x\in \partial\Omega}|B(x)|\text{, }\Pi:=\sup_{s\in [0,L]}\operatorname{Per}(D_s)\text{, }\delta:=\sup_{s\in [0,L]}\operatorname{diam}(D_s)\text{, }\beta:=\underset{\substack{s\in [0,L]\\ \phi\in [0,2\pi]}}{\max}|(\partial_s\psi)(s,1,\phi)|\text{, }
	\\
	\label{S2E7}
	\alpha_-:=\underset{\substack{s\in [0,L]\\ \phi\in [0,2\pi]}}{\min}|(\partial_{\phi}\psi)(s,1,\phi)|\text{, }\alpha_+:=\underset{\substack{s\in [0,L]\\ \phi\in [0,2\pi]}}{\max}|(\partial_{\phi}\psi)(s,1,\phi)|\text{, }\xi:=\frac{\alpha_-}{\alpha_+}\sqrt{\frac{(1-\kappa_+\delta)^2}{(1-\kappa_+\delta)^2+2(\tau^2_+\delta^2+\beta^2)}}
\end{gather}
with $\kappa_+$ and $\tau_+$ as in (\ref{S2E3}) and where $\operatorname{Per}(D_s)$ denotes the perimeter of $D_s$, i.e. the length of $\partial D_s$.

Our final main result then reads as follows
\begin{thm}
	\label{S2T3}
	Let $\chi$ be a regular smooth curve and let $\Omega\subset\mathbb{R}^3$ be a regular smooth solid torus with core orbit $\chi$. Suppose that
	\begin{gather}
		\nonumber
		\frac{\Pi L\xi (1-\kappa_+\delta)}{|\Omega|^{\frac{2}{3}}}\geq 13
	\end{gather}
	where $L$ is the length of $\chi$, $\Pi,\xi,\delta$ are as in (\ref{S2E6}),(\ref{S2E7}) and $\kappa_+$ is as in (\ref{S2E3}). Then $\Omega$ does not maximise $\lambda_+$ among all other bounded smooth domains of the same volume as $\Omega$. In particular $\lambda_+(\Omega)<\Lambda(|\Omega|)$.
\end{thm}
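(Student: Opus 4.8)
I would prove the contrapositive: assuming that $\Omega$ does attain the supremum in (\ref{S1E11}) among all bounded smooth domains of volume $V:=|\Omega|$, I would deduce that $\Pi L\xi(1-\kappa_+\delta)/|\Omega|^{2/3}<13$, which contradicts the displayed hypothesis; the assertion $\lambda_+(\Omega)<\Lambda(|\Omega|)$ is then immediate since $\lambda_+(\Omega)\le\Lambda(|\Omega|)$ always holds, with equality exactly when $\Omega$ is optimal. The overall strategy is to feed the maximising eigenfield of $\operatorname{BS}'$ into the geometric lower bound for the corresponding curl eigenvalue and compare the resulting estimate with the explicitly known value for the round ball of the same volume.

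First I would set up the field. Since $\Omega$ is optimal and is a solid torus, \Cref{S1T1} supplies a maximising eigenfield $B$ of $\operatorname{BS}'$, i.e.\ $\operatorname{BS}'(B)=\lambda_+(\Omega)B$, which is smooth, divergence-free, tangent to $\partial\Omega$, of constant non-zero speed $|B|\equiv b>0$ on $\partial\Omega$, and whose boundary orbits are geodesics of the torus $\partial\Omega$. As recalled in the introduction, any such eigenfield satisfies $\operatorname{curl}(B)=\mu B$ with $\mu:=\lambda_+(\Omega)^{-1}>0$, and, being an eigenfield of the modified Biot-Savart operator, it automatically verifies the additional hypotheses collected in \Cref{S3L3}, in particular the vanishing of its toroidal circulation along $\partial\Omega$. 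Hence $B$ is admissible for \Cref{S3C6}, which produces a lower bound for $\mu=\lambda_+(\Omega)^{-1}$ in terms of the geometry of $\Omega$ alone.

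The second, and main, step is to make this lower bound explicit for a regular solid torus with core orbit $\chi$, that is, to insert the parametrisation $\Psi$ of (\ref{S2E1}) and estimate the induced Riemannian metric. In the $s$-direction the metric coefficient is bounded below by $(1-\kappa_+\delta)$ up to the contributions of the torsion $\tau_+$ and of the $s$-derivative $\beta$ of the cross-sectional diffeomorphisms, while along $\partial D_s$ the angular speed of $\psi_s$ is pinched between $\alpha_-$ and $\alpha_+$; using also that the length of the cross-sectional boundary curve $\partial\Sigma_s$ equals $\operatorname{Per}(D_s)\le\Pi$, together with the co-area formula on $\partial\Omega$, these estimates convert the bound of \Cref{S3C6} into one of the form $\mu\ge c_1\,\Pi L\,\xi\,(1-\kappa_+\delta)/|\Omega|$ with an explicit constant $c_1$, the distortion factor $\xi$ of (\ref{S2E7}) being precisely the combination the metric estimates produce: the ratio $\alpha_-/\alpha_+$ accounts for the non-uniformity of the boundary parametrisation and the square-root factor for the curvature/torsion distortion. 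One must also keep track throughout of the non-degeneracy $\kappa_+\delta<1$, which guarantees that $\Psi$ is an embedding with positive-definite metric.

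Finally I would compare with the round ball. Optimality of $\Omega$ forces $\lambda_+(\Omega)\ge\lambda_+(\mathbb B)$, where $\mathbb B\subset\mathbb R^3$ is the Euclidean ball with $|\mathbb B|=|\Omega|$; by the explicit spectrum of the ball \cite[Theorem A]{CDGT00} and the scaling $\lambda_+(t\mathbb B)=t\lambda_+(\mathbb B)$ one has $\lambda_+(\mathbb B)=c_0|\Omega|^{1/3}$ for an explicit $c_0>0$, whence $\mu=\lambda_+(\Omega)^{-1}\le c_0^{-1}|\Omega|^{-1/3}$. Combining this with the previous step yields $c_1\,\Pi L\,\xi\,(1-\kappa_+\delta)/|\Omega|\le c_0^{-1}|\Omega|^{-1/3}$, i.e.\ $\Pi L\,\xi\,(1-\kappa_+\delta)/|\Omega|^{2/3}\le(c_0c_1)^{-1}$, and a numerical evaluation of the constants gives $(c_0c_1)^{-1}<13$, completing the contrapositive. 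I expect the genuine difficulty to lie in the second step: carrying the intrinsically formulated estimate of \Cref{S3C6} through the twisted tubular parametrisation $\Psi$, which couples the rotation of the Frenet-Serret frame with the family $\psi_s$, and being careful enough with every constant (and with the translation between whatever boundary quantity appears in \Cref{S3C6} and the combination $\Pi L\xi(1-\kappa_+\delta)$) that the final threshold comes out as the stated $13$ rather than something larger.
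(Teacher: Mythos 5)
Your proposal is correct and follows essentially the same route as the paper: assume optimality, invoke \Cref{S1T1} to obtain a constant-speed eigenfield $B$ with $\operatorname{curl}(B)=\lambda_+(\Omega)^{-1}B$ and vanishing toroidal circulation (which the paper verifies explicitly via Stokes' theorem and $\operatorname{curl}(\operatorname{BS}(B))=0$ outside $\overline{\Omega}$), feed it into \Cref{S3C6}, and compare with the explicit ball eigenvalue to get the threshold $(4\pi)^{1/3}x_0 3^{1/6}<13$. The only slight inefficiency is that your "main step" of re-deriving the geometric metric estimates is already packaged inside \Cref{S3C6} (via \Cref{S3L3}, \Cref{S3L5} and \Cref{AppBC2}), whose conclusion is stated exactly in the form $\Pi\xi L(1-\kappa_+\delta)/(\sqrt{3}|\Omega|^{2/3})\leq|\mu||\Omega|^{1/3}$ that you need.
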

Even though the proofs of \Cref{S2T3} and \Cref{S2T2} are very similar, we notice that \Cref{S2T3} provides a worse bound for tubular neighbourhoods in comparison to \Cref{S2T2}. This is due to the fact that in the situation where the $D_s$ are all Euclidean discs, some inequalities may be improved and certain integrals may be computed explicitly, while in the general case we will use rough estimates to bound the same integrals. 
\section{Proofs of main results}
\label{S3}
\subsection{Proof of \Cref{S1T3}}
Before we come to the proof of \Cref{S1T3} we prove a useful lemma.
\begin{lem}
	\label{S3L1}
	Let $\Omega\subset\mathbb{R}^3$ be a bounded smooth domain. Then there exists a basis $\Gamma_1,\dots,\Gamma_N$ of $\mathcal{H}_N(\Omega)$ such that $\{\gamma_1,\dots,\gamma_N,\tilde{\gamma}_1,\dots,\tilde{\gamma}_N\}$ is a basis of $\mathcal{H}(\partial\Omega)$, where $\gamma_i,\tilde{\gamma}_i$ denote the $L^2(\partial\Omega)$-orthogonal projections of $\Gamma_i|_{\partial\Omega}$ and $\operatorname{BS}(\Gamma_i)^{\parallel}:= \operatorname{BS}(\Gamma_i)|_{\partial\Omega}-\left(\mathcal{N}\cdot \operatorname{BS}(\Gamma_i)\right)\mathcal{N}$ onto $\mathcal{H}(\partial\Omega)$ respectively.
\end{lem}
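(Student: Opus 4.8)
The plan is to prove the (formally stronger) statement that \emph{any} basis $\Gamma_1,\dots,\Gamma_N$ of $\mathcal{H}_N(\Omega)$ has the asserted property, by reducing the claim to one linear-algebra identity on the closed surface $\Sigma:=\partial\Omega$. Throughout I identify vector fields with $1$-forms via the Euclidean metric and write $X^\flat$ for the $1$-form dual to $X$; under this dictionary $\mathcal{H}(\Sigma)$ is the space of harmonic $1$-forms on $\Sigma$, canonically isomorphic to $H^1_{\mathrm{dR}}(\Sigma)$, and the tangential part $\operatorname{BS}(\Gamma_i)^{\parallel}$ corresponds to the pullback $j^\ast\bigl(\operatorname{BS}(\Gamma_i)^\flat\bigr)$ under the inclusion $j\colon\Sigma\hookrightarrow\overline\Omega$. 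The first bookkeeping step is to record $\dim\mathcal{H}(\Sigma)=2N$. This follows from the Mayer--Vietoris sequence for $\mathbb{R}^3=\Omega\cup_\Sigma(\mathbb{R}^3\setminus\overline\Omega)$: since $H_1(\mathbb{R}^3;\mathbb{R})=0$, the inclusion-induced map $j_\ast\colon H_1(\Sigma;\mathbb{R})\to H_1(\Omega;\mathbb{R})$ is surjective; hence $j^\ast\colon H^1_{\mathrm{dR}}(\Omega)\to H^1_{\mathrm{dR}}(\Sigma)$ is injective, and the standard ``half lives, half dies'' principle for the compact $3$-manifold $\overline\Omega$ gives $\dim\operatorname{im}(j_\ast)=\tfrac12\dim H_1(\Sigma;\mathbb{R})$, so that $\dim H^1_{\mathrm{dR}}(\Sigma)=2\dim H_1(\Omega;\mathbb{R})=2N$.

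Next I would show that both $j^\ast(\Gamma_i^\flat)$ and $A_i:=j^\ast\bigl(\operatorname{BS}(\Gamma_i)^\flat\bigr)$ are \emph{closed} $1$-forms on $\Sigma$. Indeed $d(\Gamma_i^\flat)$ is the $2$-form dual to $\operatorname{curl}\Gamma_i=0$, so $\Gamma_i^\flat$ is closed on $\overline\Omega$ and hence so is its pullback; and $d\bigl(\operatorname{BS}(\Gamma_i)^\flat\bigr)=\star\Gamma_i^\flat$ because $\operatorname{curl}\operatorname{BS}(\Gamma_i)=\Gamma_i$ (here I use \cite[Theorem A]{CDG01}, which applies since $\Gamma_i$ is divergence-free and tangent to $\partial\Omega$), while the pullback of $\star\Gamma_i^\flat$ to $\Sigma$ equals $(\Gamma_i\cdot\mathcal{N})\,dA_\Sigma=0$ since $\Gamma_i$ is tangent to $\Sigma$. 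Since on a closed surface a closed $1$-form is the sum of its harmonic part and an exact form, $\gamma_i$ is the harmonic representative of the de Rham class $j^\ast[\Gamma_i^\flat]$ and $\tilde\gamma_i$ is the harmonic representative of $[A_i]$. In particular, the intersection pairing $\omega(a,b):=\int_\Sigma a\wedge b$ on $\mathcal{H}(\Sigma)$ — a nondegenerate skew form by Poincaré duality on $\Sigma$ — may be evaluated on $\gamma_i,\tilde\gamma_j$ using the closed representatives $\Gamma_i^\flat,\operatorname{BS}(\Gamma_j)^\flat$.

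The crux is now a two-line Stokes computation. Since $\Sigma=\partial\overline\Omega$, $\omega(\gamma_i,\gamma_j)=\int_\Sigma\Gamma_i^\flat\wedge\Gamma_j^\flat=\int_{\overline\Omega}d\bigl(\Gamma_i^\flat\wedge\Gamma_j^\flat\bigr)=0$ because $d\Gamma_i^\flat=d\Gamma_j^\flat=0$, so $\operatorname{span}\{\gamma_1,\dots,\gamma_N\}$ is $\omega$-isotropic. On the other hand $\omega(\gamma_i,\tilde\gamma_j)=\int_{\overline\Omega}d\bigl(\Gamma_i^\flat\wedge\operatorname{BS}(\Gamma_j)^\flat\bigr)=-\int_{\overline\Omega}\Gamma_i^\flat\wedge d\bigl(\operatorname{BS}(\Gamma_j)^\flat\bigr)=-\int_{\overline\Omega}\Gamma_i^\flat\wedge\star\Gamma_j^\flat=-\langle\Gamma_i,\Gamma_j\rangle_{L^2(\Omega)}$. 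Thus the mixed block of the pairing matrix is minus the Gram matrix of $\Gamma_1,\dots,\Gamma_N$, which is positive definite, hence invertible — and this uses nothing about the particular basis.

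Finally, linear independence of $\{\gamma_1,\dots,\gamma_N,\tilde\gamma_1,\dots,\tilde\gamma_N\}$ in $\mathcal{H}(\Sigma)$ follows formally: from $\sum_i a_i\gamma_i+\sum_j b_j\tilde\gamma_j=0$ one pairs with each $\gamma_k$ under $\omega$ and uses isotropy of the $\gamma_i$ to get $\sum_j b_j\,\omega(\gamma_k,\tilde\gamma_j)=0$ for all $k$, whence $b=0$ by invertibility of the Gram matrix; then $\sum_i a_i\gamma_i=0$, and since $\Gamma\mapsto\gamma=P(\Gamma|_\Sigma)$ is injective on $\mathcal{H}_N(\Omega)$ — its kernel consists of those $\Gamma$ with $j^\ast[\Gamma^\flat]=0$, and $j^\ast$ is injective by the first step — we conclude $a=0$. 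As $\mathcal{H}(\Sigma)$ has dimension $2N$, the family $\{\gamma_i\}\cup\{\tilde\gamma_j\}$ is a basis, and $\{\Gamma_i\}$ was an arbitrary basis of $\mathcal{H}_N(\Omega)$. I expect the only genuine obstacle to be the topological bookkeeping — establishing $\dim\mathcal{H}(\Sigma)=2N$ and the injectivity of the boundary restriction $j^\ast$ on $H^1_{\mathrm{dR}}$ — together with a careful verification that $\operatorname{BS}(\Gamma_i)^{\parallel}$ is exactly the pullback of $\operatorname{BS}(\Gamma_i)^\flat$, so that Stokes' theorem applies; once that is in place, the decisive computation (that the mixed block is a Gram matrix) is immediate.
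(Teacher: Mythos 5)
Your proof is correct, and its decisive computation is the same as the paper's: Stokes' theorem applied to $\Gamma_i^\flat\wedge\Gamma_j^\flat$ and to $\Gamma_i^\flat\wedge\operatorname{BS}(\Gamma_j)^\flat$, together with $\operatorname{curl}(\operatorname{BS}(\Gamma_j))=\Gamma_j$, shows that $\operatorname{span}\{\gamma_i\}$ is isotropic for the intersection pairing while the mixed block is (up to sign) the $L^2(\Omega)$-Gram matrix of the $\Gamma_i$ --- this is exactly (\ref{S3E1}) and (\ref{S3E2}) in the paper, written there with $\mathcal{N}\times\cdot$ instead of the wedge product. Where you genuinely diverge is in the two auxiliary facts. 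For the injectivity of $\Gamma\mapsto\gamma$ the paper argues analytically, showing $\|\Gamma\|^2_{L^2(\Omega)}=\int_{\partial\Omega}(\operatorname{BS}(\Gamma)\times\nabla f)\cdot\mathcal{N}\,d\sigma=0$ whenever $\Gamma|_{\partial\Omega}$ is exact, whereas you argue topologically via injectivity of $j^\ast$ on $H^1_{\mathrm{dR}}$; note that your step still needs the injectivity of $\mathcal{H}_N(\Omega)\rightarrow H^1_{\mathrm{dR}}(\Omega)$, $\Gamma\mapsto[\Gamma^\flat]$ (a standard Hodge--Morrey--Friedrichs fact, or equivalently the elementary observation that a gradient field which is divergence-free and tangent to $\partial\Omega$ is the gradient of a Neumann-harmonic function and hence vanishes), and you should make that last link explicit rather than fold it into the phrase ``kernel consists of those $\Gamma$ with $j^\ast[\Gamma^\flat]=0$''. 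For the count $\dim\mathcal{H}(\partial\Omega)=2N$ you use Mayer--Vietoris plus the ``half lives, half dies'' principle, while the paper uses $2\chi(\Omega)=\chi(\partial\Omega)$ together with the Hodge decomposition; both are standard. A small bonus of your version is that it yields the conclusion for an \emph{arbitrary} basis of $\mathcal{H}_N(\Omega)$, since any Gram matrix is invertible, whereas the paper normalises to an $L^2(\Omega)$-orthonormal basis to obtain $\delta_{ik}$; this stronger form is not needed elsewhere in the paper but costs nothing.
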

\begin{proof}[Proof of \Cref{S3L1}]
	Let $N:=\dim(\mathcal{H}(\Omega))$ and let $\Gamma_1,\dots,\Gamma_N$ be an $L^2(\Omega)$-orthogonal basis of $\mathcal{H}(\Omega)$. We observe first that if $\Gamma\in \mathcal{H}_N(\Omega)$ and $\Gamma|_{\partial\Omega}=\nabla^{\partial\Omega}f$ for some function $f\in C^{\infty}(\partial\Omega)$, then $\Gamma=0$ throughout $\Omega$. Indeed we can extend $f$ to a smooth function $f\in C^{\infty}(\overline{\Omega})$ and observe that, \cite[Theorem A]{CDG01}, $\operatorname{curl}(\operatorname{BS}(\Gamma))=\Gamma$. Thus, setting for notational simplicity $A:=\operatorname{BS}(\Gamma)$ we find
	\begin{gather}
		\nonumber
		\|\Gamma\|^2_{L^2(\Omega)}=\langle \Gamma,\operatorname{curl}(A)\rangle_{L^2(\Omega)}=\int_{\partial\Omega}(A\times \Gamma)\cdot \mathcal{N}d\sigma=\int_{\partial\Omega}(A\times \nabla f)\cdot \mathcal{N}d\sigma
		\\
		\nonumber
		=\int_{\Omega}\operatorname{curl}(A)\cdot \nabla fd^3x=\int_{\Omega}\Gamma\cdot \nabla fd^3x=0
	\end{gather}
	where we integrated by parts, used that $\Gamma$ and $\nabla f$ are curl-free and that $\Gamma|_{\partial\Omega}=(\nabla f)^{\parallel}$. In addition we have the identity $\operatorname{curl}_{\partial\Omega}(\Gamma|_{\partial\Omega})=\mathcal{N}\cdot \operatorname{curl}(\Gamma)=0$ and thus any $\Gamma\in \mathcal{H}_N(\Omega)$ has the property $\Gamma|_{\partial\Omega}=\nabla^{\partial\Omega}f+\gamma$ where $\gamma$ is the projection of $\Gamma|_{\partial\Omega}$ onto $\mathcal{H}(\partial\Omega)$ and $f\in C^{\infty}(\partial\Omega)$ is a suitable function. These considerations imply that if $\Gamma_1,\dots,\Gamma_N$ is any basis of $\mathcal{H}_N(\Omega)$, then $\gamma_1,\dots,\gamma_N$ are linearly independent.
	
	Further, if we express $\Gamma_i|_{\partial\Omega}=d f_i+\gamma_i$ then we find
	\begin{gather}
		\nonumber
		\int_{\partial\Omega}\gamma_i\cdot (\mathcal{N}\times \gamma_k)d\sigma=\int_{\partial\Omega}(d f_i+\gamma_i)\cdot (\mathcal{N}\times (d f_k+\gamma_k))d\sigma
		\\
		\label{S3E1}
		=\int_{\partial\Omega}\Gamma_i\cdot (\mathcal{N}\times \Gamma_k)d\sigma=\int_{\partial\Omega}(\Gamma_k\times \Gamma_i)\cdot \mathcal{N}d\sigma=0\text{ for all }1\leq i,k\leq N,
	\end{gather}
	where we used that $\mathcal{N}\times \cdot:\mathcal{H}(\partial\Omega)\rightarrow \mathcal{H}(\partial\Omega)$ is an isomorphism, that the exact fields are $L^2(\partial\Omega)$-orthogonal to the div-free fields and that the co-exact fields are $L^2(\partial\Omega)$-orthogonal to the curl-free fields and used Gauss' theorem and the fact that the $\Gamma_i$ are all curl-free on $\Omega$ in the last step.
	
	Next we observe that $\operatorname{curl}(\operatorname{BS}(\Gamma_i))=\Gamma_i\parallel \partial\Omega$ and thus $\operatorname{BS}(\Gamma_i)^{\parallel}$ is a curl-free field on $\partial\Omega$ so that we can similarly express $\operatorname{BS}(\Gamma_i)^{\parallel}=d h_i+\tilde{\gamma}_i$, where $\tilde{\gamma}_i$ are the projections onto $\mathcal{H}(\partial\Omega)$ and $h_i\in C^{\infty}(\partial\Omega)$ are suitable functions. We then compute in a similar spirit
	\begin{gather}
		\nonumber
		\int_{\partial\Omega}\tilde{\gamma}_i\cdot (\mathcal{N}\times \gamma_k)d\sigma=\int_{\partial\Omega}(d h_i+\tilde{\gamma}_i)\cdot (\mathcal{N}\times (d f_k+\gamma_k))d\sigma
		\\
		\nonumber
		=\int_{\partial\Omega}\operatorname{BS}(\Gamma_i)^{\parallel}\cdot (\mathcal{N}\times \Gamma_k)d\sigma=\int_{\partial\Omega}\operatorname{BS}(\Gamma_i)\cdot (\mathcal{N}\times \Gamma_k)d\sigma=\int_{\partial\Omega}(\Gamma_k\times \operatorname{BS}(\Gamma_i))\cdot\mathcal{N}d\sigma
		\\
		\label{S3E2}
		=\int_{\Omega}\operatorname{div}(\Gamma_k\times\operatorname{BS}(\Gamma_i))d^3x=\int_{\Omega}\Gamma_k\cdot \Gamma_id^3x=\delta_{ik}\text{ for all }1\leq i,k\leq N,
	\end{gather}
	where we used the vector calculus identity $\operatorname{div}(X\times Y)=\operatorname{curl}(X)\cdot Y-X\cdot \operatorname{curl}(Y)$ and the fact that the $\Gamma_i$ form an $L^2(\Omega)$-orthonormal basis of $\mathcal{H}_N(\Omega)$.
	
	We claim now that $\{\gamma_1,\dots,\gamma_N,\tilde{\gamma}_1,\dots,\tilde{\gamma}_N\}$ are linearly independent. To see this let $a_i,b_i\in \mathbb{R}$, $1\leq i\leq N$ with
	\begin{gather}
		\label{S3E3}
		\sum_{i=1}^N(a_i\gamma_i+b_i\tilde{\gamma}_i)=0.
	\end{gather}
	We can multiply (\ref{S3E3}) by $\mathcal{N}\times \gamma_k$ and integrate to deduce
	\begin{gather}
		\nonumber
		0=\sum_{i=1}^N\left(a_i\int_{\partial\Omega}\gamma_i\cdot (\mathcal{N}\times \gamma_k)d\sigma+b_i\int_{\partial\Omega}\tilde{\gamma}_i\cdot(\mathcal{N}\times \gamma_k)d\sigma\right)=b_k
	\end{gather}
	where we used (\ref{S3E1}) and (\ref{S3E2}). Since $1\leq k\leq N$ was arbitrary we see that (\ref{S3E3}) becomes $\sum_{i=1}^Na_i\gamma_i=0$. But we have previously argued that $\gamma_1,\dots,\gamma_N$ are linearly independent and thus $a_k=0$ for all $1\leq k\leq N$ which shows that $\gamma_1,\dots,\gamma_N,\tilde{\gamma}_1,\dots,\tilde{\gamma}_N$ are linearly independent.
	
	We are left with observing that $\dim(\mathcal{H}(\partial\Omega))=2\dim(\mathcal{H}(\Omega))$. This follows immediately from the fact that $2\chi(\Omega)=\chi(\partial\Omega)$, cf. \cite[Proposition 18.6.2]{Die08}, the definition of the Euler characteristic of a compact manifold $\chi(M)=\sum_{k=0}^{\dim(M)}(-1)^k\dim(H^k_{\operatorname{dR}}(M))$, the fact that $\dim(H^1_{\operatorname{dR}}(M))=\dim(\mathcal{H}_N(M))$ (notice that $\mathcal{H}_N(M)=\mathcal{H}(M)$ in case $\partial M=\emptyset$), cf. \cite[Theorem 2.6.1]{S95}, and the fact that $\dim(H^2_{\operatorname{dR}}(\Omega))=\#\partial\Omega-1$, \cite[Hodge decomposition theorem]{CDG02}, where $\#\partial\Omega$ denotes the number of connected components of $\partial\Omega$ and we keep in mind that we consider the situation where $\Omega$ is a domain, i.e. connected. This completes the proof.
\end{proof}
\begin{cor}
	\label{S3C2}
	Let $\Omega\subset\mathbb{R}^3$ be a bounded, smooth domain. If $B\in L^2\mathcal{V}_0(\Omega)$ is an eigenfield of $\operatorname{BS}^{\prime}$ and satisfies $B\in \mathcal{K}(\Omega)$, then $B|_{\partial\Omega}$ is a gradient field on $\partial\Omega$. In particular, $B|_{\partial\Omega}$ must vanish somewhere on $\partial\Omega$.
\end{cor}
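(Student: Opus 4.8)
The plan is to show that in the Hodge decomposition of the tangential trace $B|_{\partial\Omega}$ on the closed surface $\partial\Omega$, both the coexact and the harmonic parts vanish. First I would note that an eigenfield $B$ of $\operatorname{BS}^{\prime}$, say $\operatorname{BS}^{\prime}(B)=\lambda B$, necessarily has $\lambda\neq 0$: since $\operatorname{BS}(B)$ and $\operatorname{BS}^{\prime}(B)$ differ by a gradient, $\lambda=0$ would give $\operatorname{BS}(B)=\nabla p$ for some $p$, hence $B=\operatorname{curl}(\operatorname{BS}(B))=\operatorname{curl}(\nabla p)=0$, using that $B$ is div-free and tangent, cf.\ \cite[Theorem A]{CDG01}. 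Thus, as recalled in the introduction, $\operatorname{curl}(B)=\lambda^{-1}B$, and therefore $\operatorname{curl}_{\partial\Omega}(B|_{\partial\Omega})=\mathcal{N}\cdot\operatorname{curl}(B)=\lambda^{-1}\mathcal{N}\cdot B=0$; so $B|_{\partial\Omega}$ is a curl-free (closed) tangent field on the closed surface $\partial\Omega$, and by Hodge theory we may write $B|_{\partial\Omega}=df+h$ with $f\in C^{\infty}(\partial\Omega)$ and $h\in\mathcal{H}(\partial\Omega)$ — i.e.\ the coexact part already vanishes. It remains to prove $h=0$.

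For this I would invoke the basis $\Gamma_1,\dots,\Gamma_N$ of $\mathcal{H}_N(\Omega)$ provided by \Cref{S3L1}, together with the associated harmonic fields $\gamma_k,\tilde{\gamma}_k\in\mathcal{H}(\partial\Omega)$ and the expansions $\Gamma_k|_{\partial\Omega}=df_k+\gamma_k$, $\operatorname{BS}(\Gamma_k)^{\parallel}=dh_k+\tilde{\gamma}_k$. Since $B\in\mathcal{K}(\Omega)$ and each $\Gamma_k$ is curl-free we have $\langle B,\Gamma_k\rangle_{L^2(\Omega)}=0$; moreover, because $\operatorname{BS}(\Gamma_k)$ and $\operatorname{BS}^{\prime}(\Gamma_k)$ differ by a gradient and $B$ is div-free and tangent, self-adjointness of $\operatorname{BS}^{\prime}$ on $L^2\mathcal{V}_0(\Omega)$ gives $\langle B,\operatorname{BS}(\Gamma_k)\rangle_{L^2(\Omega)}=\langle B,\operatorname{BS}^{\prime}(\Gamma_k)\rangle_{L^2(\Omega)}=\langle\operatorname{BS}^{\prime}(B),\Gamma_k\rangle_{L^2(\Omega)}=\lambda\langle B,\Gamma_k\rangle_{L^2(\Omega)}=0$. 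Applying the identity $\operatorname{div}(X\times Y)=\operatorname{curl}(X)\cdot Y-X\cdot\operatorname{curl}(Y)$ with $X=B$ and $Y=\Gamma_k$, respectively $Y=\operatorname{BS}(\Gamma_k)$, using $\operatorname{curl}(\Gamma_k)=0$, $\operatorname{curl}(\operatorname{BS}(\Gamma_k))=\Gamma_k$ and $\operatorname{curl}(B)=\lambda^{-1}B$, and integrating over $\Omega$ yields
\begin{gather}
\nonumber
\int_{\partial\Omega}(B\times\Gamma_k)\cdot\mathcal{N}\,d\sigma=\lambda^{-1}\langle B,\Gamma_k\rangle_{L^2(\Omega)}=0,
\\
\nonumber
\int_{\partial\Omega}(B\times\operatorname{BS}(\Gamma_k))\cdot\mathcal{N}\,d\sigma=\lambda^{-1}\langle B,\operatorname{BS}(\Gamma_k)\rangle_{L^2(\Omega)}-\langle B,\Gamma_k\rangle_{L^2(\Omega)}=0
\end{gather}
for all $1\leq k\leq N$.

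Finally I would translate these into statements about $h$. Only tangential parts enter these integrals (as $B$ is tangent), and for tangent fields $u,v$ on $\partial\Omega$ one has $(u\times v)\cdot\mathcal{N}=-u\cdot(\mathcal{N}\times v)$; since $\mathcal{N}\times\cdot$ sends exact fields to coexact fields and harmonic fields to harmonic fields, and exact, coexact and harmonic fields are mutually $L^2(\partial\Omega)$-orthogonal, substituting $B|_{\partial\Omega}=df+h$ makes the two identities above collapse to $\int_{\partial\Omega}h\cdot(\mathcal{N}\times\gamma_k)\,d\sigma=0$ and $\int_{\partial\Omega}h\cdot(\mathcal{N}\times\tilde{\gamma}_k)\,d\sigma=0$ for all $k$. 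By \Cref{S3L1} the fields $\gamma_1,\dots,\gamma_N,\tilde{\gamma}_1,\dots,\tilde{\gamma}_N$ form a basis of $\mathcal{H}(\partial\Omega)$, and since $\mathcal{N}\times\cdot$ is an isomorphism of $\mathcal{H}(\partial\Omega)$ so are their images; hence $h$ is $L^2(\partial\Omega)$-orthogonal to all of $\mathcal{H}(\partial\Omega)$, forcing $h=0$. Therefore $B|_{\partial\Omega}=\nabla^{\partial\Omega}f$ is a gradient field, and since $\partial\Omega$ is compact, $f$ attains its maximum at a point where $\nabla^{\partial\Omega}f$, and hence $B$, vanishes.

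I expect the main obstacle to be obtaining the \emph{full} vanishing of $h$ rather than merely half of its periods: pairing $B$ only against the harmonic Neumann fields $\Gamma_k$ yields only the first family of identities, i.e.\ orthogonality of $h$ to the $N$-dimensional subspace $\operatorname{span}\{\mathcal{N}\times\gamma_k\}$ of the $2N$-dimensional space $\mathcal{H}(\partial\Omega)$. The decisive extra input is the second pairing against $\operatorname{BS}(\Gamma_k)$, which relies on $\operatorname{curl}(\operatorname{BS}(\Gamma_k))=\Gamma_k$, on self-adjointness of the (modified) Biot--Savart operator, and crucially on the fact, established in \Cref{S3L1}, that the $\gamma_k$ together with the $\tilde{\gamma}_k$ exhaust $\mathcal{H}(\partial\Omega)$. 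The remaining ingredients — the surface identity $\operatorname{curl}_{\partial\Omega}(X^{\parallel})=\mathcal{N}\cdot\operatorname{curl}(X)$ and the Hodge-orthogonality bookkeeping on $\partial\Omega$ — are routine.
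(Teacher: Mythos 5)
Your proposal is correct and follows essentially the same route as the paper: reduce to showing that the closed field $B|_{\partial\Omega}$ is $L^2(\partial\Omega)$-orthogonal to $\mathcal{H}(\partial\Omega)$ by testing against $\mathcal{N}\times\gamma_k$ and $\mathcal{N}\times\tilde{\gamma}_k$, using $\operatorname{curl}(B)=\lambda^{-1}B$, $\operatorname{curl}(\operatorname{BS}(\Gamma_k))=\Gamma_k$, self-adjointness of $\operatorname{BS}^{\prime}$, the hypothesis $B\in\mathcal{K}(\Omega)$, and the basis of $\mathcal{H}(\partial\Omega)$ from \Cref{S3L1}. The only (immaterial) difference is organisational: you first isolate $\langle B,\Gamma_k\rangle_{L^2(\Omega)}=0$ and $\langle B,\operatorname{BS}(\Gamma_k)\rangle_{L^2(\Omega)}=0$ and then feed them into the divergence identity, whereas the paper runs the two boundary integrals (\ref{S3E5}) and (\ref{S3E6}) directly, with (\ref{S3E5}) in fact holding for every eigenfield without the $\mathcal{K}(\Omega)$ assumption.
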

\begin{proof}[Proof of \Cref{S3C2}]
	Let $\Omega\subset\mathbb{R}^3$ be a bounded smooth domain and $B\in L^2\mathcal{V}_0(\Omega)$ be an eigenfield of $\operatorname{BS}^{\prime}$ corresponding to some eigenvalue $\lambda\in \mathbb{R}$. If $\lambda=0$, then $\operatorname{BS}^{\prime}(B)=0$ and consequently, \cite[Theorem A]{CDG01}, $0=\operatorname{curl}(\operatorname{BS}^{\prime}(B))=\operatorname{curl}(\operatorname{BS}(B))=B$ where we used that $B$ is div-free, tangent to the boundary and that the modified Biot-Savart operator differs from the Biot-Savart operator only by a (curl-free) gradient field. Hence $\lambda\neq 0$. We then conclude in the same spirit
	\begin{gather}
		\label{S3E4}
		\operatorname{curl}(B)=\frac{\operatorname{curl}(\operatorname{BS}^{\prime}(B))}{\lambda}=\frac{B}{\lambda}=\mu B
	\end{gather}
	where we set $\mu:=\frac{1}{\lambda}$. Now let $\Gamma\in \mathcal{H}_N(\Omega)$ be any fixed element. As argued in the proof of \Cref{S3L1} we have $\operatorname{BS}(\Gamma)^{\parallel}=d h+\tilde{\gamma}$ where $\tilde{\gamma}$ is the projection onto $\mathcal{H}(\partial\Omega)$ and where $h\in C^{\infty}(\partial\Omega)$ is some suitable function. In addition, according to (\ref{S3E4}) we find $\operatorname{curl}(B)\parallel \partial\Omega$ and hence $B|_{\partial\Omega}$ is curl-free on $\partial\Omega$. This allows us to compute
	\begin{gather}
		\nonumber
		\int_{\partial\Omega}B\cdot (\mathcal{N}\times \tilde{\gamma} )d\sigma=\int_{\partial\Omega}B\cdot (\mathcal{N}\times \operatorname{BS}(\Gamma))d\sigma=\int_{\partial\Omega}(\operatorname{BS}(\Gamma)\times B)\cdot \mathcal{N}d\sigma
		\\
		\nonumber
		=\int_{\Omega}\operatorname{curl}(\operatorname{BS}(\Gamma))\cdot Bd^3x-\int_{\Omega}\operatorname{BS}(\Gamma)\cdot \operatorname{curl}(B)d^3x=\int_{\Omega}\Gamma\cdot Bd^3x-\mu\int_{\Omega}\operatorname{BS}^{\prime}(\Gamma)\cdot Bd^3x
		\\
		\label{S3E5}
		=\int_{\Omega}\Gamma\cdot Bd^3x-\mu \int_{\Omega}\Gamma\cdot \operatorname{BS}^{\prime}(B)d^3x=\int_{\Omega}\Gamma\cdot Bd^3x-\mu \lambda\int_{\Omega}\Gamma\cdot Bd^3x=0
	\end{gather}
	where we used (\ref{S3E4}), that $B$ is an eigenfield of $\operatorname{BS}^{\prime}$, that $\operatorname{BS}^{\prime}$ is symmetric and that $\mu=\frac{1}{\lambda}$. We notice that up to this point we have not used the assumption $B\in \mathcal{K}(\Omega)$ so that (\ref{S3E5}) is valid for any eigenfield of $\operatorname{BS}^{\prime}$.
	
	Now we additionally assume that $B\in \mathcal{K}(\Omega)$ which implies that $\int_{\Omega}B\cdot \Gamma d^3x=0$ for every $\Gamma\in \mathcal{H}_N(\Omega)$. We recall that we can write $\Gamma|_{\partial\Omega}=d f+\gamma$ where $\gamma$ is the projection onto $\mathcal{H}(\partial\Omega)$ and where $f\in C^{\infty}(\partial\Omega)$ is a suitable smooth function. Keeping in mind that $B|_{\partial\Omega}$ is closed, we find
	\begin{gather}
		\label{S3E6}
		\int_{\partial\Omega}B\cdot (\mathcal{N}\times \gamma)d\sigma=\int_{\partial\Omega}B\cdot (\mathcal{N}\times \Gamma)d\sigma=-\int_{\Omega}\Gamma\cdot \operatorname{curl}(B)d^3x=-\mu\int_{\Omega}\Gamma\cdot Bd^3x=0
	\end{gather}
	where we used (\ref{S3E4}) and the fact that $B$ is $L^2(\Omega)$-orthogonal to all curl-free fields.

According to \Cref{S3L1} we can fix a basis $\Gamma_1,\dots,\Gamma_N$ of $\mathcal{H}_N(\Omega)$ such that $\gamma_1,\dots,\gamma_N,\tilde{\gamma}_1,\dots,\tilde{\gamma}_N$ is a basis of $\mathcal{H}(\partial\Omega)$ where $\gamma_i$ and $\tilde{\gamma}_i$ are the projections of $\Gamma_i|_{\partial\Omega}$ and $\operatorname{BS}(\Gamma_i)^{\parallel}$ onto $\mathcal{H}(\partial\Omega)$. Since $\mathcal{N}\times\cdot:\mathcal{H}(\partial\Omega)\rightarrow \mathcal{H}(\partial\Omega)$ is an isomorphism we deduce that $\mathcal{N}\times \gamma_1,\dots,\mathcal{N}\times \gamma_N,\mathcal{N}\times \tilde{\gamma}_1,\dots,\mathcal{N}\times \tilde{\gamma}_N$ also provides a basis. Then according to (\ref{S3E5}) and (\ref{S3E6}) we deduce that $B|_{\partial\Omega}$ is $L^2(\partial\Omega)$-orthogonal to $\mathcal{H}(\partial\Omega)$. Lastly we notice that we have argued before that $B|_{\partial\Omega}$ is curl-free and hence must be a gradient field as claimed.
\end{proof}
\begin{proof}[Proof of \Cref{S1T3}]
	We argue by contradiction. Suppose that $\Lambda(V)=\mathcal{M}^{-1}(V)$ and that there does exist some bounded smooth domain $\Omega$ of volume $V$ with $\mu_+(\Omega)=\mathcal{M}(V)$. Then due to the inequality $\lambda_+(\Omega)\geq \frac{1}{\mu_+(\Omega)}=\mathcal{M}^{-1}(V)=\Lambda(V)$ we find $\lambda_+(\Omega)=\Lambda(V)$ since $\Lambda(V)$ is the supremum of $\lambda_+$ over all bounded smooth domains of volume $V$. We conclude that $\lambda_+(\Omega)=\frac{1}{\mu_+(\Omega)}$. According to the variational characterisation of $\mu_+(\Omega)$ and $\lambda_+(\Omega)$ we find
	\begin{gather}
		\nonumber
		\sup_{B\in L^2\mathcal{V}_0(\Omega)\setminus\{0\}}\frac{\mathcal{H}(B)}{\|B\|^2_{L^2(\Omega)}}=\lambda_+(\Omega)=\frac{1}{\mu_+(\Omega)}=\sup_{B\in \mathcal{K}(\Omega)\setminus\{0\}}\frac{\mathcal{H}(B)}{\|B\|^2_{L^2(\Omega)}}
	\end{gather}
	and we know that the supremum is attained on both sides precisely by the eigenfields of $\operatorname{BS}^{\prime}$ corresponding to $\lambda_+(\Omega)$ and the eigenfields of curl corresponding to $\mu_+(\Omega)$ respectively. We conclude that any first eigenfield $B\in \mathcal{K}(\Omega)$ realises the Rayleigh quotient for $\lambda_+(\Omega)$ and hence must be an eigenfield of $\operatorname{BS}^{\prime}$. Hence \Cref{S3C2} implies that $B|_{\partial\Omega}$ admits a zero. This contradicts \Cref{S1T1}, which states that $|B|$ must be a non-zero constant on $\partial\Omega$ because $\lambda_+(\Omega)=\Lambda(V)$. Hence no bounded, smooth $\Omega$ of volume $V$ can realise $\mathcal{M}(V)$ as claimed.
\end{proof}
\subsection{The axi-symmetric case}
\begin{proof}[Proof of \Cref{S2T1}]
	Let $\Omega$ be a smooth, rotationally symmetric solid torus (not necessarily a standard torus) which maximises $\lambda_+(\Omega)$ among all other bounded smooth domains of the same volume. Then, if we let $B\in L^2\mathcal{V}_0(\Omega)$ denote a (smooth) eigenfield of $\operatorname{BS}^{\prime}$, it follows from \Cref{S1T1} that $B|_{\partial\Omega}$ is of constant, non-zero speed and all field lines of $B|_{\partial\Omega}$ are geodesics on $\partial\Omega$.
	
	Upon applying isometries to $\Omega$ we may suppose that the axis of symmetry is the $z$-axis. Since $\Omega$ is a solid torus, it does not intersect the axis of symmetry and thus we may define $Y(x,y,z):=(-y,x,0)$ whose flow induces rotations around the $z$-axis so that $Y\parallel\partial\Omega$. In turn we may define $\Gamma:=\frac{Y}{|Y|^2}$ and notice that $\Gamma\in \mathcal{H}_N(\Omega)$. It is also easy to verify that in fact $\Gamma|_{\partial\Omega}\in \mathcal{H}(\partial\Omega)$ so that $\Gamma|_{\partial\Omega}=\gamma$ where $\gamma$ denotes the $L^2(\partial\Omega)$-orthogonal projection of $\Gamma|_{\partial\Omega}$ onto $\mathcal{H}(\partial\Omega)$. We may further define $\tilde{\gamma}:=\mathcal{N}\times \gamma\in \mathcal{H}(\partial\Omega)$ and together $\gamma,\tilde{\gamma}$ span $\mathcal{H}(\partial\Omega)$. We recall that according to (\ref{S3E4}) we have $\operatorname{curl}(B)\parallel \partial\Omega$ and thus $B|_{\partial\Omega}$ is closed so that
	\begin{gather}
		\label{S3E7}
		B|_{\partial\Omega}=dh+a\gamma+b\tilde{\gamma}
	\end{gather}
	for suitable $h\in C^{\infty}(\partial\Omega)$, $a,b\in \mathbb{R}$. Next we may let $\sigma_t$ be any fixed field line of $\gamma$ and notice that any such $\sigma_t$ bounds a surface outside of $\Omega$, i.e. there exists a smooth (bounded) surface $\mathcal{A}\subset \mathbb{R}^3\setminus \overline{\Omega}$ with $\partial\mathcal{A}=\sigma_t$. Since $\sigma_t$ is a field line of $\gamma$ and $\tilde{\gamma}$ and $\gamma$ are pointwise orthogonal we find
	\begin{gather}
		\nonumber
		\lambda_+(\Omega)a\int_{\sigma_t}\gamma=\lambda_+(\Omega)\int_{\sigma_t}(dh+a\gamma+b\tilde{\gamma})=\int_{\sigma_t}(\lambda_+(\Omega)B)
		\\
		\label{S3E8}
		=\int_{\sigma_t}\operatorname{BS}^{\prime}(B)=\int_{\sigma_t}\operatorname{BS}(B)=\int_{\mathcal{A}}\operatorname{curl}(\operatorname{BS}(B))\cdot n d\sigma=0
	\end{gather}
	where we used that $\sigma_t$ is a closed loop, the eigenfield property of $B$, the fact that $\operatorname{BS}^{\prime}(B)$ and $\operatorname{BS}(B)$ differ only by a gradient field, the Stokes theorem, the fact that $\operatorname{curl}(\operatorname{BS}(B))=0$ for every div-free field which is tangent to the boundary of $\Omega$, cf. \cite[Proposition 1]{CDG01}, and the fact that $\mathcal{A}\subset \overline{\Omega}^c$. Since $\sigma_t$ is a field line of $\gamma$ we get $\int_{\sigma_t}\gamma\neq 0$ so that (\ref{S3E8}) implies $a=0$ and hence (\ref{S3E7}) becomes
	\begin{gather}
		\label{S3E9}
		B|_{\partial\Omega}=dh+b\tilde{\gamma}.
	\end{gather}
	We observe now that on the one hand $b\neq 0$ because $B$ is non-vanishing on $\partial\Omega$ according to \Cref{S1T1} and on the other hand there must be some $p\in \partial\Omega$ with $dh(p)=0$. We may then define the vector field $Z(q):=\frac{B(q)}{b|\tilde{\gamma}(p)|}$ which is well-defined because $\gamma$ and $\tilde{\gamma}$ are non-vanishing on $\partial\Omega$. Further, one easily verifies that the field lines of $\frac{\tilde{\gamma}}{|\tilde{\gamma}|}$ are all unit speed geodesics. We notice that by choice of $p$ and according to (\ref{S3E9}) we have $Z(p)=\frac{\tilde{\gamma}(p)}{|\tilde{\gamma}(p)|}$. In addition, the field lines of $B$, cf. \Cref{S1T1}, and hence of $Z$ are geodesics and so are the field lines of $\frac{\tilde{\gamma}(p)}{|\tilde{\gamma}(p)|}$. We conclude by uniqueness of geodesics that the field line of $Z$ starting at $p$ must coincide with the field line of $\frac{\tilde{\gamma}}{|\tilde{\gamma}|}$ starting at $p$. In particular, $B|_{\partial\Omega}$ admits a closed field line $\sigma_p$ which defines the boundary of an appropriate cross section $\Sigma$ of $\Omega$. With this knowledge we may compute
	\begin{gather}
		\label{S3E10}
		Lc=\int_0^{L}|B|=\int_{\sigma_p}B=b\int_{\sigma_p}\tilde{\gamma}
	\end{gather}
	where $L$ is the length of the boundary curve of $\Sigma$ and $c=|B||_{\partial\Omega}$ is the value of the constant speed of $B$ on $\partial\Omega$. We can now integrate (\ref{S3E9}) to compute $b$
	\begin{gather}
		\nonumber
		\lambda_+(\Omega)b\int_{\sigma_p}\tilde{\gamma}=\int_{\sigma_p}(\lambda_+(\Omega)B)=\int_{\sigma_p}\operatorname{BS}^{\prime}(B)=\int_{\sigma_p}\operatorname{BS}(B)=\int_{\Sigma}B\cdot n d\sigma=\operatorname{Flux}(B)
	\end{gather}
	where we used that $\sigma_p$ is a closed loop, that $B$ is an eigenfield of $\operatorname{BS}^{\prime}$, that $\operatorname{BS}^{\prime}(B)$ and $\operatorname{BS}(B)$ differ only by a gradient field, the Stokes' theorem and the fact that $\sigma_p$ bounds a cross-sectional area $\Sigma\subset \Omega$. We can insert this into (\ref{S3E10}) and arrive at
	\begin{gather}
		\nonumber
		Lc=\frac{\operatorname{Flux}(B)}{\lambda_+(\Omega)}.
	\end{gather}
	It then follows from (\ref{S3E4}) and an identical reasoning as in \cite[Lemma 3.5]{G23} that we have the identity $c=\frac{\|B\|_{L^2(\Omega)}}{\sqrt{3|\Omega|}}$ so that we arrive at
	\begin{gather}
		\label{S3E11}
		\lambda_+(\Omega)L\|B\|_{L^2(\Omega)}=\operatorname{Flux}(B)\sqrt{3|\Omega|}.
	\end{gather}
	According to the Hodge-decomposition theorem \cite[Corollary 3.5.2]{S95} we may express $B=\operatorname{curl}(A)+\widetilde{\Gamma}$ for suitable $\widetilde{\Gamma}\in \mathcal{H}_N(\Omega)$ and a div-free, smooth vector field $A$ with $A\perp\partial\Omega$. We notice that $\operatorname{Flux}(\operatorname{curl}(A))=\int_{\Sigma}\operatorname{curl}(A)\cdot nd\sigma=\int_{\sigma_p}A=0$ because $\sigma_p$ is tangent to $\partial\Omega$, while $A$ is normal to it. Thus $\operatorname{Flux}(B)=\operatorname{Flux}(\widetilde{\Gamma})$. Further, since $\dim\left(\mathcal{H}_N(\Omega)\right)=\dim\left(H^1_{\operatorname{dR}}(\Omega)\right)=1$, cf. \cite[Theorem 2.6.1]{S95} and because $\Omega$ is a solid torus, we conclude that $\widetilde{\Gamma}$ is a constant multiple of $\Gamma=\frac{Y}{|Y|^2}$. More precisely we find $\frac{\widetilde{\Gamma}}{\|\widetilde{\Gamma}\|_{L^2(\Omega)}}=\pm \frac{\Gamma}{\|\Gamma\|_{L^2(\Omega)}}$. Since $|Y|$ is rotationally symmetric we further obtain
	\begin{gather}
		\nonumber
		\|\Gamma\|^2_{L^2(\Omega)}=2\pi \int_{\Sigma}|\Gamma|d\sigma=2\pi \int_{\Sigma}\Gamma \cdot nd\sigma=2\pi\operatorname{Flux}(\Gamma)
	\end{gather}
	where we used that $\Gamma$ is everywhere normal to the cross section $\Sigma$ and hence $n=\frac{\Gamma}{|\Gamma|}$. We conclude that
	\begin{gather}
		\nonumber
		|\operatorname{Flux}(B)|=|\operatorname{Flux}(\widetilde{\Gamma})|=\|\widetilde{\Gamma}\|_{L^2(\Omega)}\left|\operatorname{Flux}\left(\frac{\widetilde{\Gamma}}{\|\widetilde{\Gamma}\|_{L^2(\Omega)}}\right)\right|
		\\
		\nonumber
		=\|\widetilde{\Gamma}\|_{L^2(\Omega)}\left|\operatorname{Flux}\left(\frac{\Gamma}{\|\Gamma\|_{L^2(\Omega)}}\right)\right|=\frac{\|\widetilde{\Gamma}\|_{L^2(\Omega)}\|\Gamma\|_{L^2(\Omega)}}{2\pi}.
	\end{gather}
	We thus deduce from (\ref{S3E11})
	\begin{gather}
		\label{S3E12}
		\lambda_+(\Omega)L\|B\|_{L^2(\Omega)}\leq \frac{\sqrt{3|\Omega|}}{2\pi}\|\widetilde{\Gamma}\|_{L^2(\Omega)}\|\Gamma\|_{L^2(\Omega)}.
	\end{gather}
	Let us set for notational simplicity $X:=\operatorname{curl}(A)$ so that $B=X+\widetilde{\Gamma}$ and we notice that the Hodge-decomposition is $L^2(\Omega)$-orthogonal so that we find $\|\widetilde{\Gamma}\|_{L^2(\Omega)}=\alpha \|B\|_{L^2(\Omega)}$ for some $0\leq \alpha\leq 1$. To estimate $\alpha$ we make use of the eigenfield equation $\lambda_+B=\operatorname{BS}^{\prime}(B)$ which implies by linearity
	\begin{gather}
		\nonumber
		\lambda_+\|\widetilde{\Gamma}\|^2_{L^2(\Omega)}=\lambda_+\langle B,\widetilde{\Gamma}\rangle_{L^2}=\langle \operatorname{BS}^{\prime}(B),\widetilde{\Gamma}\rangle_{L^2}
		\\
		\label{S3E13}
		=\langle \operatorname{BS}^{\prime}(X),\widetilde{\Gamma}\rangle_{L^2}+\langle \operatorname{BS}^{\prime}(\widetilde{\Gamma}),\widetilde{\Gamma}\rangle_{L^2}=\langle \operatorname{BS}^{\prime}(X),\widetilde{\Gamma}\rangle_{L^2}+\mathcal{H}(\widetilde{\Gamma}).
	\end{gather}
	We observe first that since $\widetilde{\Gamma}$ is a multiple of $\Gamma$ and any two distinct field lines of $\Gamma$ are unlinked, it follows from the linking interpretation of helicity, cf. \cite{Arnold2014},\cite{V03}, that $\mathcal{H}(\widetilde{\Gamma})=0$. Further we can estimate $\langle \operatorname{BS}^{\prime}(X),\widetilde{\Gamma}\rangle_{L^2(\Omega)}\leq \|\operatorname{BS}^{\prime}(X)\|_{L^2(\Omega)}\|\widetilde{\Gamma}\|_{L^2(\Omega)} \leq\lambda\|X\|_{L^2(\Omega)}\|\widetilde{\Gamma}\|_{L^2(\Omega)}$ where $\lambda$ is the largest absolute value of any eigenvalue of $\operatorname{BS}^{\prime}$. Since by assumption $\Omega$ maximises $\lambda_+$ we must have $\lambda=\lambda_+(\Omega)$ (since otherwise applying any orientation reversing diffeomorphism of $\mathbb{R}^3$ to $\Omega$ would yield a solid torus of the same volume and of higher $\lambda_+$). Combining (\ref{S3E13}) with this inequality yields $\|\widetilde{\Gamma}\|^2_{L^2(\Omega)}\leq \|X\|^2_{L^2(\Omega)}$. We recall that $\|\widetilde{\Gamma}\|^2_{L^2(\Omega)}=\alpha^2\|B\|^2_{L^2(\Omega)}$ and since the Hodge-decomposition is $L^2$-orthogonal we must have $\|X\|^2_{L^2(\Omega)}=(1-\alpha^2)\|B\|^2_{L^2(\Omega)}$ and hence $2\alpha^2\leq 1$, i.e. $\alpha\leq \frac{1}{\sqrt{2}}$. We can utilise this inequality and derive from (\ref{S3E12}) the following bound
	\begin{gather}
		\label{S3E14}
		\frac{\lambda_+(\Omega)}{|\Omega|^{\frac{1}{3}}}\leq \sqrt{\frac{3}{2}}\frac{|\Omega|^{\frac{1}{6}}}{2\pi L}\|\Gamma\|_{L^2(\Omega)}.
	\end{gather}
	Now we recall that we assumed w.l.o.g. that the axis of symmetry is the symmetry axis and that $\Gamma=\frac{Y}{|Y|^2}$ with $Y(x,y,z)=(-y,x,0)$. We notice that $|Y(x,y,z)|=\sqrt{x^2+y^2}$ is the distance of $(x,y,z)$ to the axis of symmetry and thus $|\Gamma(x,y,z)|=\frac{1}{|Y(x,y,z)|}\leq \frac{1}{d_-}$ where $d_-$ denotes the closest distance of $\Omega$ to the axis of symmetry. We further remark that $d_->0$ since otherwise $\Omega$ would be topologically a ball. Since $|\Gamma|$ is rotationally symmetric we conclude by axi-symmetry of $\Omega$, $\|\Gamma\|^2_{L^2(\Omega)}=2\pi \int_{\Sigma}|\Gamma|d\sigma\leq 2\pi\frac{|\Sigma|}{d_-}$ where $\Sigma$ is a cross section generating $\Omega$. Therefore we arrive at the estimate
	\begin{gather}
		\nonumber
		\frac{\lambda_+(\Omega)}{|\Omega|^{\frac{1}{3}}}\leq \sqrt{\frac{3}{4\pi}}\frac{|\Omega|^{\frac{1}{6}}}{d^{\frac{1}{2}}_-}\frac{\sqrt{|\Sigma|}}{L}.
	\end{gather}
	We recall that $L$ was the length of a poloidal boundary curve, i.e. the length of a curve bounding a cross section. We therefore find $L=|\partial \Sigma|$ and so the isoperimetric inequality yields $\frac{\sqrt{|\Sigma|}}{L}\leq \frac{1}{\sqrt{4\pi}}$ so that we finally arrive at the following estimate
	\begin{gather}
		\label{S3E15}
		\frac{\lambda_+(\Omega)}{|\Omega|^{\frac{1}{3}}}\leq \frac{\sqrt{3}}{4\pi}\frac{|\Omega|^{\frac{1}{6}}}{d^{\frac{1}{2}}_-}.
	\end{gather}
	We observe now that $\Omega$ maximises $\lambda_+$ among all bounded smooth domains of the same volume as $\Omega$ if and only if $\Omega$ maximises the quotient $\frac{\lambda_+(\Omega_*)}{|\Omega_*|^{\frac{1}{3}}}$ among all other bounded smooth domains $\Omega_*$ without any volume constraint, cf. \cite[Lemma 4.3]{EGPS23} (notice that the cited reference deals with the first curl eigenvalue, but the arguments provided therein apply mutatis mutandis to the Biot-Savart eigenvalue problem). The idea now is therefore to compare the right hand side of (\ref{S3E15}) to a domain $\Omega_*$ for which the eigenvalue is known. To this end we make use of the fact that $\lambda_+(B_1(0))=\frac{1}{x_0}$ where $x_0$ is the smallest positive solution to the equation $x=\tan(x)$, cf. \cite[Theorem A]{CDGT00}. Since balls do not support non-vanishing tangent fields it follows from \Cref{S1T1} that the unit ball $B_1(0)$ does not maximise $\lambda_+$ in its volume class. As pointed out this means that if
	$\frac{\sqrt{3}}{4\pi}\frac{|\Omega|^{\frac{1}{6}}}{d^{\frac{1}{2}}_-}\leq \frac{\lambda_+(B_1(0))}{|B_1(0)|^{\frac{1}{3}}}=\left(\frac{3}{4\pi}\right)^{\frac{1}{3}}\frac{1}{x_0}$, then according to (\ref{S3E15}) $\Omega$ cannot maximise $\lambda_+$ in its own volume class. Now we observe that $x_0=4.49\dots$ and so if $\frac{|\Omega|}{d^3_-}\leq 1$ the desired inequality is satisfied and hence $\Omega$ cannot be optimal.
	
	Finally, if $\Omega$ is a standard rotationally symmetric solid torus, then $|\Omega|=2\pi^2 r^2R$ and $d_-=R-r$ so that $\frac{|\Omega|}{d^3_-}=\frac{2\pi^2 a}{(a-1)^3}$ where $a=\frac{R}{r}$ is the corresponding aspect ratio. The non-optimality condition is then satisfied for all $a\geq 6$.
\end{proof}
\subsection{Regular solid tori and tubular neighbourhoods}
We first prove the following main auxiliary result
\begin{lem}
	\label{S3L3}
	Let $\Omega\subset\mathbb{R}^3$ be a regular smooth solid torus with core orbit $\chi$. Suppose that $B$ is a vector field on $\Omega$ with the following properties
	\begin{enumerate}
		\item $B$ is smooth up to and tangent to the boundary of $\Omega$, 
		\item $\operatorname{curl}(B)=\mu B$ for some $\mu\in \mathbb{R}\setminus\{0\}$,
		\item $|B|^2|_{\partial\Omega}$ is a non-zero constant,
		\item $\int_{\sigma_t}B=0$ for some toroidal loop $\sigma_t\subset\partial\Omega$.
	\end{enumerate}
	Then $\mu$ satisfies the following inequality
	\begin{gather}
		\label{S3E16}
		\frac{\Pi\xi}{\sqrt{3}|\Omega|^{\frac{1}{6}}\|X\|_{L^2(\Omega)}}\leq |\mu||\Omega|^{\frac{1}{3}} 
	\end{gather}
	for all smooth, curl-free vector fields $X$ on $\Omega$ with $\int_{\chi}X=1$ and where $\Pi$ and $\xi$ are defined as in (\ref{S2E6}) and (\ref{S2E7}). If $\Omega$ is a tubular neighbourhood of radius $R$ around $\chi$, then we may replace $\xi$ by $\eta$ in (\ref{S3E16}) where $\eta$ is defined in (\ref{S2E3}).
\end{lem}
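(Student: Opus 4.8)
The plan is to bound $|\mu|$ below by first turning $\int_\Omega B\cdot X$ into a boundary term, and then estimating, geometrically, the poloidal circulation of $B$ along $\partial\Omega$.

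\emph{Reduction.} Since $\mu\neq 0$, the hypothesis $\operatorname{curl}(B)=\mu B$ forces $\operatorname{div}(B)=0$; moreover on the torus $\partial\Omega$ the surface curl of $B|_{\partial\Omega}$ equals $\mathcal{N}\cdot\operatorname{curl}(B)=\mu\,\mathcal{N}\cdot B=0$, so $B^\flat|_{\partial\Omega}$ is a closed $1$-form. Fix a basis $\{\sigma_t,\sigma_p\}$ of $H_1(\partial\Omega;\mathbb{Z})$ with $\sigma_t$ the toroidal loop from the fourth hypothesis, $\sigma_p$ a poloidal loop bounding a cross-section, and intersection number $\sigma_t\cdot\sigma_p=1$; set $\Phi:=\int_{\sigma_p}B$. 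For any smooth curl-free $X$ on $\Omega$ with $\int_\chi X=1$, the identity $\operatorname{div}(B\times X)=\operatorname{curl}(B)\cdot X$ and Gauss' theorem give $\int_\Omega B\cdot X\,d^3x=\tfrac1\mu\int_{\partial\Omega}(B\times X)\cdot\mathcal{N}\,d\sigma=\tfrac1\mu\int_{\partial\Omega}B^\flat\wedge X^\flat_\parallel$, where $X^\flat_\parallel$ is the tangential restriction of $X^\flat$ (the normal part of $X$ drops out) and is again closed on $\partial\Omega$ because $\mathcal{N}\cdot\operatorname{curl}(X)=0$. Evaluating the intersection form of $\partial\Omega$ and using $\int_{\sigma_p}X=0$ and $\int_{\sigma_t}X=\int_\chi X=1$ (both since $X$ is curl-free, $\sigma_p$ bounds in $\overline\Omega$ and $\sigma_t$ is homologous to $\chi$ in $\overline\Omega$) one gets $|\int_\Omega B\cdot X|=|\Phi|/|\mu|$, so Cauchy--Schwarz yields $|\mu|\geq|\Phi|/(\|B\|_{L^2(\Omega)}\,\|X\|_{L^2(\Omega)})$. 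Exactly as in the proof of \Cref{S2T1} (cf. \cite[Lemma 3.5]{G23}), using only $\operatorname{curl}(B)=\mu B$, $\operatorname{div}(B)=0$ and the constancy of $|B|^2|_{\partial\Omega}=:c^2$, one gets $\|B\|_{L^2(\Omega)}=c\sqrt{3|\Omega|}$. Hence it suffices to prove $|\Phi|\geq c\,\Pi\,\xi$ (resp.\ $|\Phi|\geq c\,\Pi\,\eta$ in the tubular case), since then $|\mu|\geq\frac{c\Pi\xi}{c\sqrt{3|\Omega|}\,\|X\|_{L^2(\Omega)}}$, which becomes (\ref{S3E16}) after multiplying by $|\Omega|^{1/3}$.

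\emph{Eikonal/geodesic structure on $\partial\Omega$.} Locally $B^\flat|_{\partial\Omega}=dF$ with $|\nabla^{\partial\Omega}F|=|B|=c$ constant, so $\nabla^{\partial\Omega}F$ is a geodesic field ($\nabla_{\nabla F}\nabla F=\tfrac12\nabla|\nabla F|^2=0$) and $F$ strictly increases along the integral curves of $B$. Also $\Phi\neq 0$: otherwise $B^\flat|_{\partial\Omega}=dh$ would be exact with $|dh|=c>0$, impossible at a maximum of $h$ on the closed surface $\partial\Omega$. By the fourth hypothesis $[B^\flat|_{\partial\Omega}]$ is a nonzero multiple of the poloidal cohomology class only, i.e.\ $B^\flat|_{\partial\Omega}=dh+\tfrac{\Phi}{2\pi}\,d\phi$ globally; passing to the cyclic cover $\widehat M\to\partial\Omega$ unwrapping the poloidal circle, $B^\flat$ lifts to $dF$ for a single-valued $F=\tilde h+\tfrac{\Phi}{2\pi}\,\tilde\phi$ which is proper in $\tilde\phi$, has $|\nabla F|=c$ and no critical points, and satisfies $F\circ(\text{deck})=F+\Phi$. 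Since $F$ is a proper submersion onto $\mathbb{R}$, each level set is a single embedded circle, and by deck-invariance their lengths are bounded by some fixed $\mathfrak{l}<\infty$.

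\emph{Near-closed poloidal loops and the length estimate.} Fix $q\in\mathbb{N}$. The flow of $B$ on the compact surface $\partial\Omega$ is complete; choose a field line $\gamma$ with $\gamma(0)$ on $\{F=0\}$, parametrised so that $|\dot\gamma|=c$ and $F(\gamma(t))=c^2t$. Then $\gamma(q|\Phi|/c^2)$ again lies on $\{F=0\}$ (level sets depend only on $a\bmod\Phi$), and closing the arc $\gamma|_{[0,q|\Phi|/c^2]}$ up with an arc of $\{F=0\}$ of length $\leq\mathfrak{l}$ produces a loop $\sigma_q\subset\partial\Omega$ that winds exactly $q$ times poloidally and satisfies $\operatorname{length}(\sigma_q)\leq c^{-1}q|\Phi|+\mathfrak{l}$. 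For the matching lower bound I would use the Frenet frame $\{\mathbf{t},\mathbf{n},\mathbf{b}\}$ of $\chi$: differentiating $\Psi(s,1,\phi)=\chi(s)+\mu\,\mathbf{n}+\nu\,\mathbf{b}$ with $(\mu,\nu)=\psi(s,1,\phi)$ gives $\partial_\phi\Psi=(\partial_\phi\mu)\mathbf{n}+(\partial_\phi\nu)\mathbf{b}\perp\mathbf{t}$, with $|\partial_\phi\Psi|=|\partial_\phi\psi|\in[\alpha_-,\alpha_+]$, and $\partial_s\Psi=(1-\kappa\mu)\mathbf{t}+W$ with $W\perp\mathbf{t}$ and $|W|^2=(\partial_s\mu-\tau\nu)^2+(\partial_s\nu+\tau\mu)^2\leq 2(\tau_+^2\delta^2+\beta^2)$ (using $|\psi(s,1,\phi)|\leq\delta$ and $|\partial_s\psi(s,1,\phi)|\leq\beta$); hence the angle $\theta$ between $\partial_\phi\Psi$ and $\partial_s\Psi$ obeys $\sin^2\theta\geq\frac{(1-\kappa\mu)^2}{(1-\kappa\mu)^2+|W|^2}\geq\frac{(1-\kappa_+\delta)^2}{(1-\kappa_+\delta)^2+2(\tau_+^2\delta^2+\beta^2)}$. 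Writing $\dot\sigma_q=\dot s\,\partial_s\Psi+\dot\phi\,\partial_\phi\Psi$ and bounding $|\dot\sigma_q|$ below by the norm $|\dot\phi|\,|\partial_\phi\Psi|\sin\theta$ of its component orthogonal to $\partial_s\Psi$, then integrating, gives $\operatorname{length}(\sigma_q)\geq 2\pi q\,\alpha_-\big(\tfrac{(1-\kappa_+\delta)^2}{(1-\kappa_+\delta)^2+2(\tau_+^2\delta^2+\beta^2)}\big)^{1/2}=2\pi q\,\alpha_+\,\xi\geq q\,\Pi\,\xi$, using $\Pi=\sup_s\operatorname{Per}(D_s)\leq 2\pi\alpha_+$. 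Combining the two bounds, dividing by $q$ and letting $q\to\infty$ gives $|\Phi|\geq c\,\Pi\,\xi$, which completes the proof. In the tubular case $\psi(s,\rho,\phi)=(R\rho\cos\phi,R\rho\sin\phi)$ one has $\alpha_\pm=R$, $\beta=0$, $\Pi=2\pi R$ and $W=\tau R(-\sin\phi\,\mathbf{n}+\cos\phi\,\mathbf{b})$ with $|W|=\tau R$, so $\sin^2\theta\geq\frac{(1-\kappa_+R)^2}{(1-\kappa_+R)^2+\tau_+^2R^2}=\eta^2$ and the same computation gives $|\Phi|\geq 2\pi cR\eta=c\,\Pi\,\eta$.

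\emph{Main difficulty.} The crux is the last two steps: extracting the eikonal/geodesic structure, passing to the cyclic cover, and producing level-set curves of uniformly bounded length so that the fixed additive error $\mathfrak{l}$ washes out as $q\to\infty$. The fourth hypothesis $\int_{\sigma_t}B=0$ is precisely what makes the cohomology class of $B^\flat|_{\partial\Omega}$ purely poloidal, and hence what makes both this construction and the identification of the boundary pairing with $\Phi$ possible.
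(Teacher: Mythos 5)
Your proposal is correct and, at its core, establishes the same pivotal inequality as the paper's proof, namely $\bigl|\int_{\sigma_p}B\bigr|\geq c\,\Pi\,\xi$ with $c:=|B||_{\partial\Omega}$, but it reaches this inequality, and converts it into (\ref{S3E16}), by a genuinely different route. For the geometric half, the paper writes $B|_{\partial\Omega}=dh+\mu\operatorname{Flux}(B)\gamma_p$, integrates $\int_{\sigma[0,T]}B=c^2T$ along a field line and controls the asymptotic poloidal winding $\lim_{T\rightarrow\infty}T^{-1}\int_{\sigma[0,T]}\gamma_p$ through \Cref{AppBC2}, whose proof invokes the semi-rectifiability theorem of \cite{PPS22} and an ergodic theorem; you instead lift $B^\flat|_{\partial\Omega}$ to an eikonal function $F$ on the poloidal cyclic cover, follow a field line through $q$ periods of $F$, close it up along a level circle of uniformly bounded length, and squeeze the resulting $q$-fold poloidal loop between the upper length bound $q|\Phi|/c+\mathcal{O}(1)$ and the lower bound $q\Pi\xi$; the Frenet-frame angle estimate you use for the lower bound is the same computation that underlies \Cref{AppBC2}. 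For the analytic half, the paper Hodge-decomposes $B$, proves the identity $\operatorname{Flux}(\Gamma)=\|\Gamma\|^2_{L^2(\Omega)}/\int_{\sigma_t}\Gamma$ via the Arnold--Liouville uniformisation of $\partial\Omega$ and surface helicity (the content of \Cref{S3C4}), and then compares $\|\Gamma_0\|_{L^2(\Omega)}$ with $\|X\|_{L^2(\Omega)}$; you bypass all of this by noting that $\operatorname{div}(B\times X)=\mu\,B\cdot X$ turns $\int_{\Omega}B\cdot X\,d^3x$ into the intersection pairing of the closed forms $B^\flat|_{\partial\Omega}$ and $X^{\flat}_{\parallel}$ on the boundary torus, whose period matrix gives $|\int_{\Omega}B\cdot X\,d^3x|=|\Phi|/|\mu|$, and you finish with Cauchy--Schwarz and $\|B\|_{L^2(\Omega)}=c\sqrt{3|\Omega|}$. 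Your route is shorter and more self-contained (no ergodic theory, no \cite{PPS22}, no surface-helicity machinery), at the price of losing \Cref{S3C4} as a by-product. Two points to tighten: the Gauss-theorem step requires $X$ to be smooth up to $\partial\Omega$ (for a general curl-free $X$ one should first replace $X$ by its harmonic Neumann component, which preserves the circulation and only decreases $\|X\|_{L^2(\Omega)}$, exactly as the paper does at the end of its proof); and your count of poloidal turns is a count of windings of the coordinate angle $\phi$, which is identified with the $\sigma_p$-coefficient relative to the basis dual to the Seifert-framed toroidal loop $\sigma_t$ --- the paper makes the identical identification inside \Cref{AppBC2}, so this is a shared convention rather than a gap specific to your argument, but it deserves a sentence.
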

\begin{proof}[Proof of \Cref{S3L3}]
	The goal is to attempt to follow the reasoning of the proof of \Cref{S2T1}. We notice however, that if $\Gamma\in \mathcal{H}_N(\Omega)$, then in general it will not be the case that $\Gamma|_{\partial\Omega}$ is an element of $\mathcal{H}(\partial\Omega)$ and it is also not always the case that the field lines of $\mathcal{N}\times \gamma$ are geodesics (where $\gamma$ is the projection of $\Gamma$ onto $\mathcal{H}(\partial\Omega)$) so that we no longer can exploit the fact that the field lines of our eigenfields are geodesics. Therefore several of the arguments need to be adjusted.
	
	Let for the moment $\Omega\subset\mathbb{R}^3$ be any smooth solid torus. We start by observing that there exist smooth, simple closed curves $\sigma_p,\sigma_t\subset \partial\Omega$ such that $\sigma_p$ bounds a surface $\Sigma\subset\Omega$, $\partial \Sigma=\sigma_p$, and such that $\sigma_t$ bounds a bounded surface $\mathcal{A}\subset \mathbb{R}^3\setminus\overline{\Omega}$, $\partial\mathcal{A}=\sigma_t$, and such that $\sigma_p,\sigma_t$ generate the homology of $\partial\Omega$, cf. \cite[Theorem B.2]{CP10} and \cite[Lemma 3.6]{H23} for some details. We refer to $\sigma_p$ as a poloidal loop and to $\sigma_t$ as a toroidal loop. In particular this gives us a unique basis $\gamma_p,\gamma_t\in \mathcal{H}(\partial\Omega)$ (recall that $\dim\left(\mathcal{H}(\partial\Omega)\right)=\dim\left(H^1_{\operatorname{dR}}(\partial\Omega)\right)=2$ since $\partial\Omega\cong T^2$) which is determined by
	\begin{gather}
		\label{S3E17}
		\int_{\sigma_t}\gamma_t=1=\int_{\sigma_p}\gamma_p\text{ and }\int_{\sigma_t}\gamma_p=0=\int_{\sigma_p}\gamma_t.
	\end{gather}
	We can now let $B\in L^2\mathcal{V}_0(\Omega)$ be any fixed (smooth) vector field with $\operatorname{curl}(B)=\mu B$. In particular $\operatorname{curl}(B)\parallel\partial\Omega$ and hence $B|_{\partial\Omega}$ is curl-free on $\partial\Omega$ so that we can decompose
	\begin{gather}
		\nonumber
		B|_{\partial\Omega}=dh+a\gamma_t+b\gamma_p
	\end{gather}
	for suitable $h\in C^{\infty}(\partial\Omega)$, $a,b\in \mathbb{R}$. By assumption $\int_{\sigma_t}B=0$ from which we deduce
	\begin{gather}
		\nonumber
		0=\int_{\sigma_t}B=a
	\end{gather}
	where we used that $\sigma_t$ is a closed loop and that $\int_{\sigma_t}\gamma_p=0$ so that we obtain
	\begin{gather}
		\nonumber
		B|_{\partial\Omega}=dh+b\gamma_p.
	\end{gather}
	We can integrate again along $\sigma_p$ and find
	\begin{gather}
		\nonumber
		b=\int_{\sigma_p}B=\int_{\Sigma} \operatorname{curl}(B)\cdot nd\sigma=\mu\operatorname{Flux}(B)
	\end{gather}
	where we used the eigenfield property of $B$, the fact that $\sigma_p$ bounds a surface within $\Omega$ and the normal $n$ is chosen in such a way that it is compatible with the orientation of $\sigma_p$. We therefore find
	\begin{gather}
		\label{S3E18}
		B|_{\partial\Omega}=dh+\mu\operatorname{Flux}(B)\gamma_p.
	\end{gather}
	We now let $\sigma(t)$ be any fixed field line of $B$ starting on $\partial\Omega$. Let us set $c^2:=|B|^2|_{\partial\Omega}$ which by assumption is a non-zero constant. We then denote by $\sigma[0,T]$ the field line segment starting from $t=0$ and ending at $t=T$ (which does not need to be simple for large $T$). We notice that in contrast to the situation of \Cref{S2T1} we can now not argue that $B$ must admit a poloidal field line. Instead we consider
	\begin{gather}
		\nonumber
		c^2=\frac{1}{T}\int_{\sigma[0,T]}B=\frac{h(\sigma(T))-h(\sigma(0))}{T}+\mu\operatorname{Flux}(B)\frac{\int_{\sigma[0,T]}\gamma_p}{T}
	\end{gather}
	where we take into account that the field line $\sigma$ may be non-closed. However, we observe that $h$ is bounded as a smooth function on a compact surfaces and therefore $\lim_{T\rightarrow\infty}\frac{h(\sigma(T))-h(\sigma(0))}{T}=0$. Therefore we may take the limit to deduce
	\begin{gather}
		\label{S3E19}
		|B|^2|_{\partial\Omega}=\mu\operatorname{Flux}(B)\lim_{T\rightarrow\infty}\frac{\int_{\sigma[0,T]}\gamma_p}{T}.
	\end{gather}
	Since this conclusion is valid for every sequence we may employ \Cref{AppBC2} to deduce that
	\begin{gather}
		\label{S3E20}
		|B||_{\partial\Omega}\leq \frac{\left|\mu \operatorname{Flux}(B)\right|}{\Pi\xi}
	\end{gather}
	where $\Pi=\sup_{s\in [0,L]}\operatorname{Per}(D_s)$ where $D_s=\psi_s(D)$ and $\operatorname{Per}(D_s)=\mathcal{L}(\partial D_s)$ coincides with the length of the boundary curve of the cross sections $D_s$ and where $\xi$ is given by the formula (\ref{S2E7}) and where $\xi$ may be replaced by $\eta$, as in (\ref{S2E3}), in case $\Omega$ is a tubular neighbourhood.
	
	We can now decompose $B$ according to the Hodge-decomposition theorem \cite[Corollary 3.5.2]{S95} as $B=\operatorname{curl}(A)+\Gamma$ where $A$ is a smooth, div-free vector field which is normal to $\partial\Omega$ and where $\Gamma\in \mathcal{H}_N(\Omega)$ is the $L^2(\Omega)$-orthogonal projection of $B$ onto the space of harmonic Neumann fields. In particular we then have $\operatorname{Flux}(B)=\operatorname{Flux}(\Gamma)$. Since $|B||_{\partial\Omega}$ is a non-zero constant we must have $\Gamma\neq 0$. We now claim that $\operatorname{Flux}(\Gamma)=\frac{\|\Gamma\|^2_{L^2(\Omega)}}{\int_{\sigma_t}\Gamma}$. To see this first notice that according to \cite[Proposition C.4]{G25SurfaceHelicity} we have the identity
	\begin{gather}
		\nonumber
		\operatorname{Flux}(\Gamma)=\|\Gamma\|^2_{L^2(\Omega)}\frac{\int_{\sigma_p}\widetilde{\gamma}}{\|\gamma\|^2_{L^2(\partial\Omega)}}
	\end{gather}
	where $\gamma$ denotes the $L^2(\partial\Omega)$-orthogonal projection of $\Gamma|_{\partial\Omega}$ onto $\mathcal{H}(\partial\Omega)$ and where $\widetilde{\gamma}:=\gamma\times \mathcal{N}$. We are therefore left with establishing the identity $\|\gamma\|^2_{L^2(\partial\Omega)}=\int_{\sigma_t}\gamma \cdot \int_{\sigma_p}\widetilde{\gamma}$. To this end we observe that since $\Gamma$ is not identically zero, neither is its projection $\gamma$ onto $\mathcal{H}(\partial\Omega)$, cf. the beginning of the proof of \Cref{S3L1}. It then follows from \cite[Theorem 7]{PPS22} that $\gamma$ must in fact be no-where vanishing. We can therefore define the vector fields $X:=\frac{\gamma}{|\gamma|^2}$ and $Y:=\frac{\widetilde{\gamma}}{|\gamma|^2}$. The key observation is that $X,Y$ are everywhere linearly independent, commuting vector fields so that according to the Arnold-Liouville theorem \cite[Chapter 49]{A89} there exists a diffeomorphism $\Xi:\partial\Omega\rightarrow T^2$ with the property
	\begin{gather}
		\nonumber
		\Xi_*X=a\partial_{\phi}+b\partial_{\theta}\text{ and }\Xi_*Y=c\partial_{\phi}+d\partial_{\theta}
	\end{gather}
	where $\partial_{\phi},\partial_{\theta}$ are the standard coordinate fields on $T^2$ and $a,b,c,d$ are suitable constants. By composing $\Xi$ with an additional automorphism $T^2\rightarrow T^2$ we may suppose that the field lines of $\partial_{\phi}$ are homotopic to $\Xi\circ \sigma_p$ and the field lines of $\partial_{\theta}$ are homotopic to $\Xi\circ \sigma_t$. Further, by considering $T^2=(\mathbb{R}\slash \tau_{\phi}\mathbb{Z})\times (\mathbb{R}\slash \tau_{\theta}\mathbb{Z})$ for fixed $\tau_{\phi},\tau_{\theta}>0$ it follows that, cf. the proof of \Cref{AppBL1} for more details,
	\begin{gather}
		\nonumber
		a=\tau_{\phi}\lim_{S\rightarrow\infty}\frac{\int_{\sigma_{X}[0,S]}\gamma_p}{S}\text{ and }b=\tau_{\theta}\lim_{S\rightarrow\infty}\frac{\int_{\sigma_X[0,S]}\gamma_t}{S}
	\end{gather}
	where $\sigma_X$ denotes any field line of $X$. It now follows from \cite[Section 3.4 Proof of Theorem 2.13 \& Proposition C.3]{G25SurfaceHelicity} that we have the relationships
	\begin{gather}
		\nonumber
		\gamma_t=\frac{\gamma}{\int_{\sigma_t}\gamma}\text{ and }\gamma_p=\frac{\mathcal{H}_{\partial\Omega}(\gamma_0)\gamma+\widetilde{\gamma}}{\int_{\sigma_p\widetilde{\gamma}}}
	\end{gather}
	where $\gamma_0:=\frac{\gamma}{\|\gamma\|_{L^2(\partial\Omega)}}$ and $\mathcal{H}_{\partial\Omega}$ denotes the surface helicity of a div-free field on a closed surface in $\mathbb{R}^3$, as was investigated recently in \cite{G25SurfaceHelicity} and which is a special case \cite[Theorem 2.4]{G25SurfaceHelicity} of the notion of submanifold helicity which was introduced in \cite[Section 6]{CP10}. We recall that $X=\frac{\gamma}{|\gamma|^2}$ so that a direct calculation immediately yields
	\begin{gather}
		\nonumber
		a=\tau_{\phi}\frac{\mathcal{H}_{\partial\Omega}(\gamma_0)}{\int_{\sigma_p}\widetilde{\gamma}}\text{ and }b=\frac{\tau_{\theta}}{\int_{\sigma_t}\gamma}.
	\end{gather}
	With the same reasoning, keeping in mind that $|\widetilde{\gamma}|=|\gamma|$, we find $c=\frac{\tau_{\phi}}{\int_{\sigma_p}\widetilde{\gamma}}$ and $d=0$. In particular, by making the choice $\tau_{\phi}:=\int_{\sigma_p}\widetilde{\gamma}$ and $\tau_{\theta}:=\int_{\sigma_t}\gamma$ we obtain
	\begin{gather}
		\nonumber
		\Xi_*X=\mathcal{H}_{\partial\Omega}(\gamma_0)\partial_{\phi}+\partial_{\theta}\text{ and }\Xi_*Y=\partial_{\phi}.
	\end{gather}
	We can now express the metric tensor $g$ in the coordinate system $(\phi,\theta)$ which can be achieved by observing that $X\cdot Y=0$, $|X|^2=\frac{1}{|\gamma|^2}=|Y|^2$ which gives us $3$ equations to determine the $3$ independent parameters of the metric tensor in terms of the coefficients $a,b,c,d$ of the vector field expressions $X$ and $Y$. Doing the computations yields
	\begin{gather}
		\nonumber
		g=\frac{1}{|\gamma|^2}\begin{pmatrix}
			1 & -\mathcal{H}_{\partial\Omega}(\gamma_0) \\ -\mathcal{H}_{\partial\Omega}(\gamma_0) & 1+\mathcal{H}^2_{\partial\Omega}(\gamma_0)
		\end{pmatrix}.
	\end{gather} 
	We notice that the area element $d\sigma$ on $\partial\Omega$ in this coordinate system takes the form $d\sigma=\frac{1}{|\gamma|^2}d\phi d\theta$ from which we deduce by a direct computation
	\begin{gather}
		\nonumber
		\|\gamma\|^2_{L^2(\partial\Omega)}=\int_0^{\tau_{\phi}}\int_0^{\tau_{\theta}}|\gamma|^2\frac{1}{|\gamma|^2}d\phi d\theta=\tau_{\phi}\tau_{\theta}=\int_{\sigma_p}\widetilde{\gamma}\cdot \int_{\sigma_t}\gamma=\int_{\sigma_p}\widetilde{\gamma}\cdot \int_{\sigma_t}\Gamma
	\end{gather}
	where we used that $\Gamma|_{\partial\Omega}$ and $\gamma$ differ only by a gradient field. This is precisely the claimed identity.
	
	We recall that this implies that $\operatorname{Flux}(\Gamma)=\frac{\|\Gamma\|^2_{L^2(\Omega)}}{\int_{\sigma_t}\Gamma}$ and therefore (\ref{S3E20}) becomes
	\begin{gather}
		\nonumber
		|B||_{\partial\Omega}\leq \frac{|\mu|}{\Pi\xi}\|\Gamma\|_{L^2(\Omega)}\|\Gamma_0\|_{L^2(\Omega)}
	\end{gather}
	where $\Gamma_0\in \mathcal{H}_N(\Omega)$ is the unique element satisfying $\int_{\sigma_t}\Gamma_0=1$. It then follows from \cite[Lemma 3.5]{G23} that we have $|B|^2|_{\partial\Omega}=\frac{\|B\|^2_{L^2(\Omega)}}{3|\Omega|}$. We recall that $\Gamma$ was the $L^2$-orthogonal projection of $B$ onto $\mathcal{H}_N(\Omega)$ so that we obtain
	\begin{gather}
		\label{S3E21}
		\frac{1}{\sqrt{3|\Omega|}}\leq\frac{|\mu|}{\Pi\xi}\|\Gamma_0\|_{L^2(\Omega)}.
	\end{gather}
	Finally, we observe that if $X$ is any curl-free field on $\Omega$ with the property $\int_{\sigma_t}X=1$, then according to the Hodge decomposition theorem we can write $X=\nabla h+\Gamma^{\prime}$ for suitable $h\in C^{\infty}(\overline{\Omega})$, $\Gamma^{\prime}\in \mathcal{H}_N(\Omega)$. Since $\sigma_t$ is a closed loop, we must have $\Gamma^{\prime}=\Gamma_0$ and thus $\|X\|_{L^2(\Omega)}\geq \|\Gamma_0\|_{L^2(\Omega)}$ since the Hodge-decomposition is $L^2$-orthogonal. Hence (\ref{S3E21}) becomes
	\begin{gather}
		\nonumber
		\frac{1}{\sqrt{3|\Omega|}}\leq\frac{|\mu|}{\Pi\xi}\|X\|_{L^2(\Omega)}
	\end{gather}
	which in turn can be rearranged as
	\begin{gather}
		\label{S3E22}
		\frac{\Pi\xi}{\sqrt{3}|\Omega|^{\frac{1}{6}}\|X\|_{L^2(\Omega)}}\leq |\mu||\Omega|^{\frac{1}{3}}.
	\end{gather}
	As pointed out previously, if $\Omega$ is a tubular neighbourhood, we may replace $\xi$ by $\eta$ in (\ref{S3E20}) and consequently we may also replace $\xi$ by $\eta$ in (\ref{S3E22}). Lastly, we notice that the core orbit is homotopic to a toroidal loop which completes the proof.
\end{proof}
During the proof of \Cref{S3L3} we also proved a "quantitative" version of the uniformization theorem for Euclidean tori. For potential future reference we state these conclusions as a corollary. In particular \Cref{S3C4} allows us to reconstruct an explicit lattice on $\mathbb{R}^2$ such that the corresponding metric on $\partial\Omega$ turns out to be conformally equivalent to a flat torus whose flat metric is induced by said lattice.
\begin{cor}
	\label{S3C4}
	Let $\Omega\subset\mathbb{R}^3$ be a smooth solid torus and let $\Gamma\in \mathcal{H}_N(\Omega)\setminus \{0\}$ be any fixed element. Denote by $\gamma$ the $L^2(\partial\Omega)$-orthogonal projection of $\Gamma|_{\partial\Omega}$ onto $\mathcal{H}(\partial\Omega)$ and set $\widetilde{\gamma}:=\gamma\times \mathcal{N}$. Pick further a toroidal and poloidal loop $\sigma_t$,$\sigma_p$ with $a:=\int_{\sigma_p}\widetilde{\gamma}>0$ and $b:=\int_{\sigma_t}\gamma>0$ respectively. Then there exists a diffeomorphism $\Xi:\partial\Omega\rightarrow (\mathbb{R}\slash (a\mathbb{Z}))\times (\mathbb{R}\slash (b\mathbb{Z}))$ with the property that the standard Euclidean metric on $\partial\Omega$ is the pullback via $\Xi$ of the metric $\frac{1}{|\gamma|^2}\begin{pmatrix}
		1 & -\mathcal{H}_{\partial\Omega}(\gamma_0)\\
		-\mathcal{H}_{\partial\Omega}(\gamma_0) & 1+\mathcal{H}^2_{\partial\Omega}(\gamma_0)
	\end{pmatrix}$, where $\gamma_0:=\frac{\gamma}{\|\gamma\|^2_{L^2(\partial\Omega)}}$ and where $\mathcal{H}_{\partial\Omega}(\gamma_0)$ denotes the surface helicity of $\gamma_0$.
	\newline
	Further, we have the identity $\|\gamma\|^2_{L^2(\partial\Omega)}=\left(\int_{\sigma_t}\gamma\right)\cdot \left(\int_{\sigma_p}\widetilde{\gamma}\right)$.
\end{cor}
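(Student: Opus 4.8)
The content of \Cref{S3C4} was, in effect, already derived inside the proof of \Cref{S3L3}; the plan is therefore to isolate exactly that portion of the argument and run it for an arbitrary nonzero $\Gamma\in\mathcal{H}_N(\Omega)$ rather than only for the harmonic Hodge component of the eigenfield $B$.

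First I would record the two structural facts about $\gamma$. It is nonzero: by the opening observation in the proof of \Cref{S3L1} a harmonic Neumann field whose boundary trace is exact vanishes identically, so the $L^2(\partial\Omega)$-projection of a nonzero $\Gamma$ onto $\mathcal{H}(\partial\Omega)$ cannot be zero; and then \cite[Theorem 7]{PPS22} forces a nonzero harmonic field on the torus $\partial\Omega$ to be nowhere vanishing. Consequently $\{\gamma,\widetilde\gamma\}$, with $\widetilde\gamma=\gamma\times\mathcal N$ the Hodge dual, is a global coframe on $\partial\Omega$ (pointwise orthogonal, equal norms, both zero-free), so the dual frame $X:=\gamma/|\gamma|^2$, $Y:=\widetilde\gamma/|\gamma|^2$ is well defined and everywhere linearly independent. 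Since $\gamma,\widetilde\gamma$ are closed, applying the identity $d\omega(X,Y)=X\omega(Y)-Y\omega(X)-\omega([X,Y])$ to $\omega=\gamma$ and to $\omega=\widetilde\gamma$, together with $\gamma(X)=\widetilde\gamma(Y)=1$ and $\gamma(Y)=\widetilde\gamma(X)=0$, gives $\gamma([X,Y])=\widetilde\gamma([X,Y])=0$, hence $[X,Y]=0$; thus $X,Y$ are commuting.

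Next I would invoke the Arnold--Liouville theorem \cite[Chapter 49]{A89} to obtain a diffeomorphism onto a flat torus which simultaneously linearises $X$ and $Y$, and, after composing with a suitable automorphism of the flat torus, arrange that $\partial_\phi$ and $\partial_\theta$ are carried into the homotopy classes of $\sigma_p$ and $\sigma_t$. Writing the target as $(\mathbb R/\tau_\phi\mathbb Z)\times(\mathbb R/\tau_\theta\mathbb Z)$ and $\Xi_*X=a'\partial_\phi+b'\partial_\theta$, $\Xi_*Y=c'\partial_\phi+d'\partial_\theta$, the four structure constants are pinned down exactly as in the proof of \Cref{S3L3}: one expresses them through the ergodic circulation limits $\lim_{S\to\infty}\frac1S\int_{\sigma_X[0,S]}\gamma_p$ and $\lim_{S\to\infty}\frac1S\int_{\sigma_X[0,S]}\gamma_t$ (as in the proof of \Cref{AppBL1}) and then substitutes the surface-helicity relations $\gamma_t=\gamma/\!\int_{\sigma_t}\gamma$ and $\gamma_p=(\mathcal H_{\partial\Omega}(\gamma_0)\gamma+\widetilde\gamma)/\!\int_{\sigma_p}\widetilde\gamma$ from \cite{G25SurfaceHelicity}. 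Choosing $\tau_\phi:=\int_{\sigma_p}\widetilde\gamma=a>0$ and $\tau_\theta:=\int_{\sigma_t}\gamma=b>0$ (these integrals being nonzero for nonzero $\Gamma$, one being the generator of $H^1_{\operatorname{dR}}(\Omega)$ evaluated on $\sigma_t$ and the other a nonzero multiple of $\operatorname{Flux}(\Gamma)$, and positive after an orientation choice of the loops) normalises this to $\Xi_*X=\mathcal H_{\partial\Omega}(\gamma_0)\partial_\phi+\partial_\theta$ and $\Xi_*Y=\partial_\phi$, with $\Xi$ now landing in $(\mathbb R/a\mathbb Z)\times(\mathbb R/b\mathbb Z)$.

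Finally, the metric in the $(\phi,\theta)$ chart is uniquely determined by the three scalar relations $X\cdot Y=0$ and $|X|^2=|Y|^2=|\gamma|^{-2}$; inverting the resulting linear system produces precisely the matrix in the statement, so $\Xi$ pulls that metric back to the Euclidean metric on $\partial\Omega$. Its area form is $|\gamma|^{-2}\,d\phi\,d\theta$, whence integrating $|\gamma|^2$ over $[0,\tau_\phi]\times[0,\tau_\theta]$ yields $\|\gamma\|^2_{L^2(\partial\Omega)}=\tau_\phi\tau_\theta=\big(\int_{\sigma_t}\gamma\big)\big(\int_{\sigma_p}\widetilde\gamma\big)$, the second assertion of the corollary. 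The only genuinely delicate step is the bookkeeping in the third paragraph: matching the Arnold--Liouville normal form to the prescribed loops $\sigma_p,\sigma_t$ and identifying the four structure constants via the circulation limits and the formulas of \cite{G25SurfaceHelicity}; once that is in place, everything else is the routine $2\times2$ linear-algebra computation above.
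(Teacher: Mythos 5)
Your proposal is correct and follows essentially the same route as the paper, which gives no separate proof of \Cref{S3C4} but explicitly extracts it from the corresponding portion of the proof of \Cref{S3L3} (nonvanishing of $\gamma$ via \Cref{S3L1} and \cite[Theorem 7]{PPS22}, the commuting frame $X=\gamma/|\gamma|^2$, $Y=\widetilde{\gamma}/|\gamma|^2$, Arnold--Liouville linearisation with periods $\tau_\phi=\int_{\sigma_p}\widetilde{\gamma}$, $\tau_\theta=\int_{\sigma_t}\gamma$, and the metric determined by $X\cdot Y=0$, $|X|^2=|Y|^2=|\gamma|^{-2}$). Your explicit verification of $[X,Y]=0$ via the Cartan formula applied to the closed forms $\gamma,\widetilde{\gamma}$ is a detail the paper merely asserts, and is a welcome addition.
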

\begin{lem}
	\label{S3L5}
	Let $\Omega\subset\mathbb{R}^3$ be a regular smooth solid torus with core orbit $\chi$. Suppose that $B$ is a vector field on $\Omega$ with the properties (i)-(iv) as stated in \Cref{S3L3}. Then there exists a smooth curl-free field $X$ on $\Omega$ such that $\int_{\chi}X=1$ and which satisfies the estimate
	\begin{gather}
		\label{a}
		\|X\|_{L^2(\Omega)}\leq \frac{\sqrt{|\Omega|}}{L(1-\kappa_+\delta)}
	\end{gather}
	where $L$ is the length of $\chi$ and where $\kappa_+$ and $\delta$ are as in (\ref{S2E3}) and (\ref{S2E6}). If in addition $\Omega$ is a tubular neighbourhood of radius $R$ around $\chi$, then we can find a curl-free $X$ with $\int_{\chi}X=1$ and $\|X\|_{L^2(\Omega)}\leq R\sqrt{\frac{2\pi}{L}}$.
\end{lem}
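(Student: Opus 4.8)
The plan is to produce $X$ explicitly from the geometry of $\Omega$; note that the hypotheses (i)--(iv) on $B$ play no role in this lemma and are only recorded so that the statement matches the way it is applied. Since a regular smooth solid torus with core orbit $\chi$ is by definition the image of a diffeomorphism $\Psi$ as in (\ref{S2E1}), I would put $\Phi:=\Psi^{-1}:\overline{\Omega}\rightarrow S^1\times D$, let $s:\overline{\Omega}\rightarrow\mathbb{R}\slash L\mathbb{Z}$ be the first component of $\Phi$ --- so that $s$ restricted to the core $\chi(\cdot)=\Psi(\cdot,0)$ is precisely the arc-length parameter --- and take $X:=\frac{1}{L}\nabla s$. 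Although $s$ is only $\mathbb{R}\slash L\mathbb{Z}$-valued, its differential $ds$ is a globally defined smooth closed one-form on $\overline{\Omega}$, so $X$ is a smooth curl-free vector field up to the boundary, and since $s$ increases by $L$ along $\chi$ one has $\int_{\chi}X=1$. Such an $X$ need not be $L^2$-minimal among curl-free fields with unit core circulation, but by the proof of \Cref{S3L3} any such competitor is enough.

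The crucial point will be an exact pointwise formula for $|\nabla s|$. First I would observe that for fixed $s$ the generating cross-section $\Sigma_s:=\Psi(\{s\}\times D)$ satisfies $\Sigma_s-\chi(s)\subset\operatorname{span}\{N(s),B(s)\}$, i.e. it lies inside the normal plane of $\chi$ at $\chi(s)$, so its (constant) unit normal is $\pm T(s)$; since $s$ is constant on $\Sigma_s$, this forces $\nabla s$ to be a multiple of $T(s)$. Using the Frenet--Serret relations $\chi'=T$, $N'=-\kappa T+\tau B$, $B'=-\tau N$ one finds
\begin{gather}
	\nonumber
	\partial_s\Psi=\big(1-\kappa(s)\mu_s(x)\big)T(s)+\big(\partial_s\mu_s(x)-\tau(s)\nu_s(x)\big)N(s)+\big(\tau(s)\mu_s(x)+\partial_s\nu_s(x)\big)B(s),
\end{gather}
so that $ds(\partial_s\Psi)=1$ yields $\nabla s=\big(1-\kappa(s)\mu_s(x)\big)^{-1}T(s)$ and hence $|\nabla s|=\big(1-\kappa(s)\mu_s(x)\big)^{-1}$. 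Since $\psi_s$ fixes the origin, $|\mu_s(x)|\leq|\psi_s(x)|\leq\operatorname{diam}(D_s)\leq\delta$, and $\kappa_+\delta<1$ because $\delta<\operatorname{reach}(\chi)\leq\kappa_+^{-1}$; therefore $1-\kappa(s)\mu_s(x)\geq 1-\kappa_+\delta>0$, so $|X|\leq\frac{1}{L(1-\kappa_+\delta)}$ pointwise, and integrating over $\Omega$ gives at once $\norm{X}_{L^2(\Omega)}\leq\frac{\sqrt{|\Omega|}}{L(1-\kappa_+\delta)}$, which is (\ref{a}).

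For the tubular-neighbourhood case I would keep the same $X$ but replace the crude pointwise bound by an exact integration. Now $\mu_s(x_1,x_2)=Rx_1$, $\nu_s(x_1,x_2)=Rx_2$, $\partial_{x_1}\Psi=RN(s)$, $\partial_{x_2}\Psi=RB(s)$, and reading the Jacobian of $\Psi$ off the expansion above gives $|\det D\Psi|=R^2\big(1-\kappa(s)Rx_1\big)$. Hence
\begin{gather}
	\nonumber
	\norm{X}^2_{L^2(\Omega)}=\frac{1}{L^2}\int_{\Omega}|\nabla s|^2=\frac{R^2}{L^2}\int_0^L\!\int_D\frac{dx_1\,dx_2}{1-\kappa(s)Rx_1}\,ds.
\end{gather}
Symmetrising the inner integral under $x_1\mapsto -x_1$ turns it into $\int_D\big(1-\kappa(s)^2R^2x_1^2\big)^{-1}\,dx$, and polar coordinates together with $\int_0^{2\pi}(1-a\cos^2\theta)^{-1}\,d\theta=2\pi(1-a)^{-1/2}$ give $\int_D(1-kx_1)^{-1}\,dx=2\pi\big(1+\sqrt{1-k^2}\big)^{-1}\leq 2\pi$ for every $|k|<1$; with $k=\kappa(s)R$ this yields $\norm{X}^2_{L^2(\Omega)}\leq\frac{R^2}{L^2}\cdot L\cdot 2\pi=\frac{2\pi R^2}{L}$, i.e. $\norm{X}_{L^2(\Omega)}\leq R\sqrt{2\pi\slash L}$.

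I expect the only genuinely delicate step to be the exact identity $|\nabla s|=\big(1-\kappa\mu_s\big)^{-1}$: it rests on recognising that the generating cross-sections sit inside the normal planes of $\chi$ (which is what pins $\nabla s$ to the direction of $T$) and then extracting the correct scalar from the Frenet expansion of $\partial_s\Psi$. Everything else --- smoothness of $X$ up to $\partial\Omega$, the normalisation $\int_{\chi}X=1$, the diameter bound $|\mu_s|\leq\delta$, the Jacobian computation and the elementary disc integral --- should be routine.
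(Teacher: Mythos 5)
Your proposal is correct and follows essentially the same route as the paper: the field $X=\frac{1}{L}\nabla s$ is exactly the metric dual of the paper's $1$-form $\omega$ (defined there by $\omega(T)=\frac{1}{L(1-\kappa\mu_s)}$, $\omega(N)=\omega(B)=0$ and identified with the dual covector of $\partial_s$), the pointwise identity $|X|=\frac{1}{L(1-\kappa\mu_s)}$ and the resulting general bound coincide, and in the tubular case your symmetrised disc integral $\frac{2\pi}{1+\sqrt{1-k^2}}$ is the same quantity the paper obtains as $\frac{2\pi(1-\sqrt{1-k^2})}{k^2}$ via a half-angle substitution before bounding it by $2\pi$. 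The only cosmetic differences are your cleaner justification of closedness via the circle-valued coordinate $s$ and the observation (also implicit in the paper) that hypotheses (i)--(iv) on $B$ are not used.
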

\begin{proof}[Proof of \Cref{S3L5}]
	We emphasise first that for a given $s\in S^1=\mathbb{R}\slash (L\mathbb{Z})$, where $L$ is the length of $\chi$, we may view the Frenet-Serre frame $\{T(s),N(s),B(s)\}$ as a basis of the tangent space at any $y\in \Psi(s,D)$ by translating the frame within the plane which is normal to $T(s)$ and contains $\chi(s)$. In this way we obtain a distinguished basis of each tangent space $T_y\Omega$ for each $y\in \Omega$ (recall that for each $y\in \Omega$ there is exactly one $s=s(y)\in [0,L)$ with $y\in \Psi(s,D)$).
	
	We then define a (smooth) $1$-form $\omega$ by demanding
	\begin{gather}
		\label{S3E24}
		\omega(y)(N(s(y)))=0=\omega(y)(B(s(y)))\text{ and }\omega(y)(T(s(y)))=\frac{1}{L(1-\kappa(s)\mu_s(x))}
	\end{gather} 
	where $x=x(y)\in D$ is the unique element with $y=\Psi(s,x)$, $\Psi$ is induced by the family of disc diffeomorphisms $\psi_s=(\mu_s,\nu_s):D\rightarrow \mathbb{R}^2$ and $L$ is the length of $\chi$. We observe first that
	\begin{gather}
		\nonumber
		\int_{\chi}\omega=L^{-1}\int_0^L(1-\kappa(s)\mu_s(x(\chi(s))))^{-1}ds=1
	\end{gather}
	where we used that by assumption the family $\psi_s$ fixes the origin so that $\Psi(s,0)=\chi(s)$ which by uniqueness gives $x(\chi(s))=0$ for all $s$ and consequently $\mu_s(x(\chi(s)))=\mu_s(0)=0$ for all $s$. To see that $\omega$ is closed, we may pick polar coordinates on $D$ which induces an "almost global" coordinate system on $\Omega$ by considering (with an abuse of notation) $\psi_s(\rho,\phi)=\psi_s(\rho\cos(\phi),\rho\sin(\phi))$ with $0<\rho<1$ and $0<\phi<2\pi$. We then obtain the coordinate system $(s,\rho,\phi)\mapsto \Psi(s,\rho,\phi)=\Psi(s,\rho\cos(\phi),\rho\sin(\phi))$ on $\Omega$ which covers $\Omega$ up to a null set. The corresponding induced basis vectors $\partial_s,\partial_{\rho},\partial_{\phi}$ are given as follows
	\begin{gather}
		\nonumber
		\partial_s=(1-\kappa \mu)T+(\tau \mu+(\partial_s\nu))B+((\partial_s\mu)-\tau \nu)N\text{, }\partial_{\rho}=(\partial_{\rho}\mu)N+(\partial_{\rho}\nu)B\text{, }\partial_{\phi}=(\partial_{\phi}\mu)N+(\partial_{\phi}\nu)B
	\end{gather}
	where $\kappa$ and $\tau$ are the curvature and torsion of $\chi$ respectively. We notice that the defining equation (\ref{S3E24}) implies that $\omega$ coincides (up to a constant factor) with the co-vector $ds$ which is the dual basis vector of $\partial_s$ with respect to the induced tangent basis. Hence $\omega$ is closed on a dense subset of $\Omega$ and thus by smoothness closed on all of $\Omega$. We can then use the musical isomorphism to identify the $1$-form $\omega$ with a smooth vector field $X$ on $\Omega$ which in turn is curl-free and satisfies $\int_{\chi}X=1$. We are hence left with estimating the $L^2$-norm of $X$. Since we have an almost global coordinate system of $\Omega$ we may compute the metric tensor in this coordinate system and a direct computation then yields
	\begin{gather}
		\label{S3E25}
		\|X\|^2_{L^2(\Omega)}=\frac{1}{L^2}\int_{\Omega}\frac{1}{(1-\kappa(s(y))\mu_{s(y)}(x(y)))^2}d^3y.
	\end{gather}
	Since $(1-\kappa \mu)\geq 1-\kappa_+\delta>0$ where $\kappa_+=\max_{s\in [0,L]}|\kappa(s)|$ and where $\delta=\sup_{s\in [0,L]}\operatorname{diam}(D_s)$ is the largest diameter among all normal cross sections of $\Omega$ we immediately obtain the estimate
	\begin{gather}
		\nonumber
		\|X\|^2_{L^2(\Omega)}\leq \frac{|\Omega|}{(1-\kappa_+\delta)^2L^2}
	\end{gather}
	which completes the proof in the case of regular smooth solid tori.
	
	In case that $\Omega$ is a tubular neighbourhood of radius $R$ we find $\psi_s(\rho,\phi)=(R\rho\cos(\phi),R\rho\sin(\phi))$ in which case (\ref{S3E25}) takes the form
	\begin{gather}
		\nonumber
		\|X\|^2_{L^2(\Omega)}=\frac{1}{L^2}\int_0^L\int_0^R\int_0^{2\pi}\frac{\rho}{(1-\kappa(s)\rho\cos(\phi))}d\phi d\rho ds.
	\end{gather}
	The above integral may be partially computed by means of a half angle substitution which yields (keep in mind that $|\kappa\rho|<1$)
	\begin{gather}
		\label{S3E26}
		\|X\|^2_{L^2(\Omega)}=\frac{2\pi R^2}{L^2}\int_0^L\frac{1-\sqrt{1-(\kappa(s)R)^2}}{(\kappa(s)R)^2}ds.
	\end{gather}
	Since we do not have an explicit formula for $\kappa(s)$ we will now estimate the remaining integral. This can be done by considering the function $f:(0,1)\rightarrow\mathbb{R}$, $z\mapsto \frac{1-\sqrt{1-z}}{z}$ where we recall that $0<|\kappa(s)R|<1$ since $\chi$ is assumed to be of non-vanishing curvature and since $\Omega$ is a tubular neighbourhood, see \cite[Lemma 2T]{BS99}. We observe now that $f$ is strictly increasing on $(0,1)$ (its derivative is strictly positive within this interval) and thus $0<f(z)<f(1)=1$ which we can use to estimate (\ref{S3E26}) as follows
	\begin{gather}
		\nonumber
		\|X\|^2_{L^2(\Omega)}\leq \frac{2\pi R^2}{L}
	\end{gather}
	which completes the proof in case $\Omega$ is a tubular neighbourhood.
	\end{proof}
	To summarise, we have proven the following main auxiliary result which allows us to bound a curl eigenvalue in terms of the geometry of the underlying domain.
	\begin{cor}
		\label{S3C6}
		Let $\Omega\subset\mathbb{R}^3$ be a regular solid torus with core orbit $\chi$. Suppose that $B$ is a vector field on $\Omega$ with properties (i)-(iv) as in \Cref{S3L3}, then the eigenvalue $\mu$ from (ii) satisfies the following inequality (with the notation from \Cref{S3L3} and \Cref{S3L5})
		\begin{gather}
			\label{S3E27}
			\frac{\Pi \xi L (1-\kappa_+\delta)}{\sqrt{3}|\Omega|^{\frac{2}{3}}}\leq |\mu||\Omega|^{\frac{1}{3}}.
		\end{gather}
		If in addition $\Omega$ is a tubular neighbourhood of radius $R$ around $\chi$, then we have the estimate
		\begin{gather}
			\label{S3E28}
			\left(\frac{L}{R}\right)^{\frac{1}{3}}\sqrt{\frac{2}{3}}\pi^{\frac{1}{3}}\eta \leq |\mu||\Omega|^{\frac{1}{3}}
		\end{gather}
		where $\eta$ is defined in (\ref{S2E3}).
	\end{cor}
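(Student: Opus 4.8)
The plan is to obtain \Cref{S3C6} by simply combining the two preceding lemmas: feed the explicit curl-free competitor field produced by \Cref{S3L5} into the eigenvalue inequality \eqref{S3E16} of \Cref{S3L3}. Both are applicable because, by hypothesis, $B$ has properties (i)--(iv) on the regular smooth solid torus $\Omega$ with core orbit $\chi$, and $\chi$ is homotopic to a toroidal loop, so \eqref{S3E16} is phrased precisely in terms of curl-free fields $X$ normalised by $\int_{\chi}X=1$.

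For the general case, \Cref{S3L5} furnishes a smooth curl-free field $X$ on $\Omega$ with $\int_{\chi}X=1$ and $\|X\|_{L^2(\Omega)}\leq \sqrt{|\Omega|}/\big(L(1-\kappa_+\delta)\big)$. Since \eqref{S3E16} holds for \emph{every} admissible $X$, I would insert this particular one and, using that minimising $\|X\|_{L^2(\Omega)}$ maximises the left-hand side, obtain
\[
|\mu|\,|\Omega|^{\frac13}\;\geq\;\frac{\Pi\xi}{\sqrt3\,|\Omega|^{\frac16}\,\|X\|_{L^2(\Omega)}}\;\geq\;\frac{\Pi\xi}{\sqrt3\,|\Omega|^{\frac16}}\cdot\frac{L(1-\kappa_+\delta)}{\sqrt{|\Omega|}}\;=\;\frac{\Pi\xi L(1-\kappa_+\delta)}{\sqrt3\,|\Omega|^{\frac23}},
\]
the only bookkeeping being $\tfrac16+\tfrac12=\tfrac23$; this is \eqref{S3E27}.

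For a tubular neighbourhood of radius $R$ around $\chi$, the cross-sections $D_s=\psi_s(D)$ are Euclidean discs of radius $R$, so $\Pi=\sup_s\operatorname{Per}(D_s)=2\pi R$, and \Cref{S3L3} permits replacing $\xi$ by $\eta$. A direct volume computation — starting from $|\Omega|=\int_0^L\!\int_0^R\!\int_0^{2\pi}\rho\,(1-\kappa(s)\rho\cos\phi)\,d\phi\,d\rho\,ds$, where the curvature term drops out since $\int_0^{2\pi}\cos\phi\,d\phi=0$ — gives the exact identity $|\Omega|=\pi R^2 L$. Combining the sharper bound $\|X\|_{L^2(\Omega)}\leq R\sqrt{2\pi/L}$ from \Cref{S3L5} with \eqref{S3E16} (with $\xi$ replaced by $\eta$) then yields
\[
|\mu|\,|\Omega|^{\frac13}\;\geq\;\frac{2\pi R\,\eta}{\sqrt3\,|\Omega|^{\frac16}}\cdot\frac1R\sqrt{\frac{L}{2\pi}}\;=\;\frac{\sqrt{2\pi L}\,\eta}{\sqrt3\,(\pi R^2 L)^{\frac16}}\;=\;\sqrt{\tfrac23}\,\pi^{\frac13}\,\eta\left(\frac{L}{R}\right)^{\frac13},
\]
after collecting the powers of $\pi,L,R$; this is \eqref{S3E28}.

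The argument is thus essentially a substitution followed by elementary algebra, so I do not expect a genuine obstacle. The only points demanding a modicum of care are the exponent bookkeeping — checking $\tfrac16+\tfrac12=\tfrac23$ and, in the tubular case, that $\tfrac12-\tfrac16=\tfrac13$ and $(2\pi)^{1/2}\pi^{-1/6}=\sqrt2\,\pi^{1/3}$, so that the stated constants come out exactly — together with the closed-form volume $|\Omega|=\pi R^2L$ of a tubular neighbourhood, which is precisely what lets the exponent $\tfrac16$ on $|\Omega|$ convert cleanly into the exponent $\tfrac13$ on $L/R$ in \eqref{S3E28}.
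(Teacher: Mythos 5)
Your proposal is correct and is essentially the same argument as the paper's: the paper also obtains \eqref{S3E27} by inserting the competitor field from \Cref{S3L5} into \eqref{S3E16}, and obtains \eqref{S3E28} by additionally using $\Pi=2\pi R$, $|\Omega|=\pi R^2L$ and the sharper bound $\|X\|_{L^2(\Omega)}\leq R\sqrt{2\pi/L}$ with $\xi$ replaced by $\eta$. Your algebra and exponent bookkeeping check out.
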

	\begin{proof}[Proof of \Cref{S3C6}]
		The case of the general regular solid torus follows immediately from \Cref{S3L3} and \Cref{S3L5}. To get the estimate for the tubular neighbourhood we just need to keep in mind that $\Pi=2\pi R$ and $|\Omega|=\pi R^2L$ so that the result then also follows immediately from \Cref{S3L3} and \Cref{S3L5}.
	\end{proof}
\begin{proof}[Proof of \Cref{S2T2} and \Cref{S2T3}]
	Suppose for the moment that $\Omega\subset\mathbb{R}^3$ is a regular smooth solid torus and that it maximises $\lambda_+$ in its own volume class. According to \Cref{S1T1} there is a smooth vector field $B$ on $\Omega$ which is div-free, tangent to $\partial\Omega$, of constant (non-zero) speed on $\partial\Omega$ and such that $B$ satisfies $\operatorname{BS}^{\prime}(B)=\lambda_+(\Omega)B$ where $\operatorname{BS}^{\prime}$ denotes the modified Biot-Savart operator. It follows from (\ref{S3E4}) that $B$ also satisfies the identity $\operatorname{curl}(B)=\mu B$ with $\mu=\frac{1}{\lambda_+(\Omega)}$. Now if $\sigma_t$ is any toroidal curve on $\partial\Omega$ which bounds a surface $\mathcal{A}\subset \mathbb{R}^3\setminus\overline{\Omega}$ we can compute
	\begin{gather}
		\nonumber
		\lambda_+(\Omega)\int_{\sigma_t}B=\int_{\sigma_t}\operatorname{BS}^{\prime}(B)=\int_{\sigma_t}\operatorname{BS}(B)=\int_{\mathcal{A}}\operatorname{curl}(\operatorname{BS}(B))\cdot nd\sigma=0
	\end{gather}
	where we used that $\operatorname{BS}^{\prime}(B)$ and $\operatorname{BS}(B)$ differ only by a gradient field, that $\sigma_t$ is a closed loop, Stokes' theorem and the fact that $\operatorname{curl}(\operatorname{BS}(B))=0$ in $\overline{\Omega}^c$, cf. \cite[Proposition 1]{CDG01}. Since $\lambda_+(\Omega)>0$ we conclude $\int_{\sigma_t}B=0$ so that overall $B$ satisfies conditions (i)-(iv) of \Cref{S3L3} and thus in turn we may employ \Cref{S3C6}. It follows that
	\begin{gather}
		\label{S3E29}
		\frac{\Pi \xi L(1-\kappa_+\delta)}{\sqrt{3}|\Omega|^{\frac{2}{3}}}\leq |\mu||\Omega|^{\frac{1}{3}}=\frac{|\Omega|^{\frac{1}{3}}}{\lambda_+(\Omega)}\Leftrightarrow \frac{\lambda_+(\Omega)}{|\Omega|^{\frac{1}{3}}}\leq \frac{\sqrt{3}|\Omega|^{\frac{2}{3}}}{\Pi L\xi (1-\kappa_+\delta) }.
	\end{gather}
	Now, since $\Omega$ maximises $\lambda_+(\Omega)$, we must have $\lambda_+(\Omega)\geq \lambda_+(B_r)$ where $B_r\subset\mathbb{R}^3$ is an Euclidean ball of the same volume as $\Omega$. Since Euclidean balls do not maximise $\lambda_+$, cf. \Cref{S1T1}, the inequality is in fact strict. Further, $\lambda_+(B_r)$ is known explicitly, \cite[Theorem A]{CDGT00}, and given by $\lambda_+(B_r)=\frac{r}{x_0}$ where $x_0>0$ is the smallest positive solution of the equation $\tan(x)=x$. Since $B_r$ is of the same volume as $\Omega$, we have $r=\sqrt[3]{\frac{3}{4\pi}|\Omega|}$. We hence get from (\ref{S3E29})
	\begin{gather}
		\label{S3E30}
		\sqrt[3]{\frac{3}{4\pi}}\frac{1}{x_0}=\frac{\lambda_+(B_r)}{|\Omega|^{\frac{1}{3}}}<\frac{\lambda_+(\Omega)}{|\Omega|^{\frac{1}{3}}}\leq \frac{\sqrt{3}|\Omega|^{\frac{2}{3}}}{\Pi L\xi (1-\kappa_+\delta)}\Leftrightarrow \frac{\Pi L\xi (1-\kappa_+\delta)}{|\Omega|^{\frac{2}{3}}}<(4\pi)^{\frac{1}{3}}x_03^{\frac{1}{6}}<13
	\end{gather}
	where we used that $x_0\approx 4.49\dots$ and rounded up the approximate value to the next integer value. This shows that if the inequality in (\ref{S3E30}) is violated, the corresponding domain $\Omega$ cannot be optimal which proves \Cref{S2T3}.
	
	The proof of \Cref{S2T2} proceeds in the same way. According to \Cref{S3C6} and by comparison to the ball as above we get the inequality
	\begin{gather}
		\label{S3E31}
		\sqrt[3]{\frac{3}{4\pi}}\frac{1}{x_0}=\frac{\lambda_+(B_r)}{|\Omega|^{\frac{1}{3}}}<\frac{\lambda_+(\Omega)}{|\Omega|^{\frac{1}{3}}}\leq \left(\frac{R}{L}\right)^{\frac{1}{3}}\sqrt{\frac{3}{2}}\frac{1}{\pi^{\frac{1}{3}}\eta}\Leftrightarrow \frac{L}{R}<\left(\frac{x_0}{\eta}\right)^3\left(\frac{3}{2}\right)^{\frac{3}{2}}\frac{4}{3}<\frac{223}{\eta^3}.
	\end{gather} 
	In conclusion, if $\Omega$ is a tubular neighbourhood which violates inequality (\ref{S3E31}) then $\Omega$ cannot maximise $\lambda_+(\Omega)$ in its own volume class.
\end{proof}
\section*{Acknowledgements}
The research was supported in part by the MIUR Excellence Department Project awarded to Dipartimento di Matematica, Università di Genova, CUP D33C23001110001.
\appendix
\section{Harmonic Neumann fields}
\label{AppS1}
The goal of this section is to show that if $\Gamma\in \mathcal{H}_N(\Omega):=\{\Gamma\in H^1(\Omega,\mathbb{R}^3)\mid \operatorname{curl}(\Gamma)=0\text{, }\operatorname{div}(\Gamma)=0=\mathcal{N}\cdot \Gamma \}$ where $\Omega\subset\mathbb{R}^3$ is a bounded smooth domain then $|\Gamma|^2|_{\partial\Omega}=\text{const.}$ implies that $\Gamma=0$ in $\Omega$.
\begin{prop}
	\label{AppAP1}
	Let $\Omega\subset \mathbb{R}^3$ be a bounded smooth domain and $\Gamma\in \mathcal{H}_N(\Omega)$. If $|\Gamma|^2|_{\partial\Omega}=c$ for some $c\in [0,\infty)$, then $\Gamma=0$ throughout $\Omega$.
\end{prop}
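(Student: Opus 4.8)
The plan is to combine a Bochner-type subharmonicity statement for $|\Gamma|^2$ with the maximum principle and a convexity property of $\partial\Omega$ at its furthest point from the origin. First I would dispose of the case $c=0$: then $\Gamma$ vanishes on $\partial\Omega$, and since $\operatorname{div}\Gamma=\operatorname{curl}\Gamma=0$ in $\mathbb{R}^3$ we have $\Delta\Gamma=\nabla(\operatorname{div}\Gamma)-\operatorname{curl}(\operatorname{curl}\Gamma)=0$ componentwise, so each Cartesian component of $\Gamma$ (smooth up to $\partial\Omega$ by elliptic regularity) is a harmonic function vanishing on $\partial\Omega$, hence $\Gamma\equiv 0$. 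So it suffices to rule out $c>0$.

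Assume $c>0$. From $\Delta\Gamma=0$ componentwise one obtains the Bochner identity $\Delta|\Gamma|^2=2\langle\Gamma,\Delta\Gamma\rangle+2|D\Gamma|^2=2|D\Gamma|^2\ge 0$, so $|\Gamma|^2$ is subharmonic in $\Omega$. Since $|\Gamma|^2=c$ on $\partial\Omega$, the maximum principle gives $|\Gamma|^2\le c$ on all of $\overline\Omega$; in particular every boundary point $q$ is a maximum point of $|\Gamma|^2$ over $\overline\Omega$, so $\partial_{\mathcal N}|\Gamma|^2(q)\ge 0$ for the outward unit normal $\mathcal N$. I would then rewrite this normal derivative using both hypotheses: because $\operatorname{curl}\Gamma=0$ the Jacobian $D\Gamma$ is symmetric, whence $\frac{1}{2}\partial_{\mathcal N}|\Gamma|^2=\langle\nabla_{\mathcal N}\Gamma,\Gamma\rangle=\langle\nabla_{\Gamma}\Gamma,\mathcal N\rangle$; and since $\langle\Gamma,\mathcal N\rangle$ vanishes identically on $\partial\Omega$ with $\Gamma$ tangent, differentiating that relation along $\Gamma$ yields $\langle\nabla_{\Gamma}\Gamma,\mathcal N\rangle=-\mathrm{II}(\Gamma,\Gamma)$, where $\mathrm{II}(X,Y):=\langle\nabla_X\mathcal N,Y\rangle$ is the second fundamental form of $\partial\Omega$ with respect to the outward normal. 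Hence $\mathrm{II}(\Gamma,\Gamma)\le 0$ at every point of $\partial\Omega$.

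For the contradiction, let $p^*\in\partial\Omega$ maximise $|x|$ over $x\in\overline\Omega$. Then $\Omega\subset B_{|p^*|}(0)$ and $\partial\Omega$ is internally tangent at $p^*$ to the sphere $\partial B_{|p^*|}(0)$, so a standard comparison of second fundamental forms gives $\mathrm{II}(X,X)\ge |X|^2/|p^*|$ for every $X\in T_{p^*}\partial\Omega$. Applying this with $X=\Gamma(p^*)$ and using $|\Gamma(p^*)|^2=c>0$ yields $\mathrm{II}(\Gamma,\Gamma)(p^*)\ge c/|p^*|>0$, contradicting the previous paragraph. Thus $c=0$, and by the first step $\Gamma\equiv 0$.

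The step I expect to require the most care is the boundary identity $\partial_{\mathcal N}|\Gamma|^2=-2\,\mathrm{II}(\Gamma,\Gamma)$ on $\partial\Omega$: it uses $\operatorname{curl}\Gamma=0$ (through symmetry of $D\Gamma$) and the tangency $\mathcal N\cdot\Gamma=0$ in an essential way, and it is precisely this identity which breaks down once $\operatorname{curl}\Gamma=\mu\Gamma$ with $\mu\neq 0$, explaining why the Bochner-type reasoning is confined to the harmonic case. The only other ingredient special to $\mathbb{R}^3$ is the existence of a boundary point where $\partial\Omega$ is strictly convex, which the furthest-point trick supplies for free; in a Riemannian ambient manifold one would have to assume such geometric data directly, together with a curvature condition $\mathrm{Ric}\ge 0$ replacing $\mathrm{Ric}_{\mathbb{R}^3}=0$ in the Bochner step.
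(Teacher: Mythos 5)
Your proof is correct, and its second half takes a genuinely different route from the paper. Both arguments open identically, with the Bochner identity $\Delta\tfrac{|\Gamma|^2}{2}=|\nabla\Gamma|^2\geq 0$ and the maximum principle; they then diverge. The paper goes global: from $\max_{\overline{\Omega}}|\Gamma|^2=c$ and the Rellich--Pohozaev-type identity $c=\|\Gamma\|^2_{L^2(\Omega)}/(3|\Omega|)$ (imported from \cite[Lemma 3.5]{G23}) it gets $\|\Gamma\|^2_{L^2(\Omega)}\leq c|\Omega|=\|\Gamma\|^2_{L^2(\Omega)}/3$, forcing $\Gamma=0$. You instead stay local at the boundary: since every boundary point maximises the subharmonic function $|\Gamma|^2$, you get $\partial_{\mathcal N}|\Gamma|^2\geq 0$, convert this via symmetry of $D\Gamma$ and tangency into $\mathrm{II}(\Gamma,\Gamma)\leq 0$ on all of $\partial\Omega$, and contradict this at the point of $\partial\Omega$ farthest from the origin, where the circumscribed-sphere comparison forces $\mathrm{II}(X,X)\geq |X|^2/|p^*|$. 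Your version is self-contained (no external integral identity) and makes the geometric mechanism transparent --- the boundary of a bounded domain must be strictly convex somewhere, which is incompatible with a nonvanishing tangent curl-free field of locally maximal speed; the paper's version is shorter given that Lemma 3.5 is already used elsewhere in the text and requires no boundary-convexity input. One small correction to your closing commentary: the boundary identity $\tfrac12\partial_{\mathcal N}|B|^2=-\mathrm{II}(B,B)$ does \emph{not} break down for Beltrami fields, because $\tfrac12\nabla|B|^2=\nabla_BB+B\times\operatorname{curl}(B)$ and the extra term $B\times\operatorname{curl}(B)=\mu\,B\times B$ vanishes when $\operatorname{curl}(B)=\mu B$; what actually fails for $\mu\neq 0$ is the first step, namely the subharmonicity of $|B|^2$, since $\Delta\tfrac{|B|^2}{2}=|\nabla B|^2-\mu^2|B|^2$ no longer has a sign, exactly as remarked after the paper's proof.
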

\begin{proof}[Proof of \Cref{AppAP1}]
	We notice first that according to Bochner's formula we have
	\begin{AppA}
		\label{A1}
		\Delta \frac{|\Gamma|^2}{2}=|\nabla \Gamma|^2+\operatorname{Ric}(\Gamma,\Gamma)+\left(\nabla\operatorname{div}(\Gamma)-\operatorname{curl}(\operatorname{curl}(\Gamma))\right)\cdot \Gamma=|\nabla \Gamma|^2\geq 0
	\end{AppA}
	where $\Delta$ denotes the standard scalar Laplacian $\Delta f=\partial^2_{11}f+\partial^2_{22}f+\partial^2_{33}f$, $\operatorname{Ric}$ denotes the Ricci-tensor, which vanishes since $\mathbb{R}^3$ is flat, $|\nabla \Gamma|^2=\sum_{i,j=1}^3(\partial_i\Gamma^j)^2$ and we used that $\Gamma$ is curl- and div-free. It follows from (\ref{A1}) that $|\Gamma|^2$ is subharmonic which implies that $|\Gamma|^2$ achieves its maximum on $\partial\Omega$. By assumption $|\Gamma|^2=c$ on $\partial\Omega$ for some $c\in [0,\infty)$ (note that $c$ is assumed to have the same value on all connected components of $\partial\Omega$) from which we deduce $\max_{x\in \overline{\Omega}}|\Gamma(x)|^2=c$. It follows then identical as in the proof of \cite[Lemma 3.5]{G23} that we have the identity
	\begin{AppA}
		\label{A2}
		c=\frac{\|\Gamma\|^2_{L^2(\Omega)}}{3|\Omega|}.
	\end{AppA}
	We can combine (\ref{A2}) with the estimate $\|\Gamma\|^2_{L^{\infty}(\Omega)}=c$ to deduce
	\begin{gather}
		\nonumber
		\|\Gamma\|^2_{L^2(\Omega)}\leq c|\Omega|=\frac{\|\Gamma\|^2_{L^2(\Omega)}}{3|\Omega|}|\Omega|=\frac{\|\Gamma\|^2_{L^2(\Omega)}}{3}
	\end{gather}
	which implies $\|\Gamma\|^2_{L^2(\Omega)}\leq 0$, i.e. $\|\Gamma\|_{L^2(\Omega)}=0$ and thus $\Gamma=0$ throughout $\Omega$.
\end{proof}
We notice that if $\operatorname{curl}(B)=\mu B$ for some $\mu \neq 0$, then (\ref{A1}) becomes
\begin{gather}
	\nonumber
	\Delta \frac{|B|^2}{2}=|\nabla B|^2-\mu^2|B|^2=|\nabla B|^2-|\operatorname{curl}(B)|^2
\end{gather}
which does not need to have a non-negative sign and hence the Bochner-type arguments of \Cref{AppAP1} break down in this situation.
\section{A transversality lemma}
\label{AppS2}
We recall that if $\Omega\subset\mathbb{R}^3$ is a smooth solid torus, then there exist smooth, simple closed curves $\sigma_p,\sigma_t$ on $\partial\Omega$, such that $\sigma_p$ bounds a disc within $\Omega$ and such that $\sigma_t$ bounds a surface outside of $\Omega$ and such that there is a unique basis $\gamma_p,\gamma_t\in \mathcal{H}(\partial\Omega)$ determined by
\begin{gather}
	\nonumber
	\int_{\sigma_p}\gamma_p=1=\int_{\sigma_t}\gamma_t\text{ and }\int_{\sigma_p}\gamma_t=0=\int_{\sigma_t}\gamma_p.
\end{gather}
For a given smooth vector field $X$ defined on (and tangent to) $\partial\Omega$ we then make the following definitions
\begin{AppB}
	\label{AppBE1}
	\overline{P}(X):=\frac{\int_{\partial\Omega}\gamma_p\cdot Xd\sigma}{|\partial\Omega|}\text{, }\overline{Q}(X):=\frac{\int_{\partial\Omega}\gamma_t\cdot Xd\sigma}{|\partial\Omega|}.
\end{AppB}
\begin{lem}
	\label{AppBL1}
	Let $\Omega\subset\mathbb{R}^3$ be a smooth, solid torus and suppose that $B$ is a smooth vector field tangent to $\partial\Omega$ such that $B|_{\partial\Omega}$ is no-where vanishing and div-free on $\partial\Omega$. Then there exists a diffeomorphism $\Xi:\partial\Omega\rightarrow T^2$ and a positive smooth function $\mu\in C^{\infty}(\partial\Omega,(0,\infty))$ such that $\Xi_*(\mu B)=\overline{P}(B)\partial_{\phi}+\overline{Q}(B)\partial_{\theta}$, where $\Xi_*(\mu B)$ denotes the pushforward vector field and where $\partial_{\phi},\partial_{\theta}$ are the standard coordinate fields on $T^2$ with the property that the field lines of $\partial_{\phi}$ are homotopic to $\Xi\circ \sigma_p$ and the filed lines of $\partial_{\theta}$ are homotopic to $\Xi\circ \sigma_t$.
\end{lem}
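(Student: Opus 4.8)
The plan is to recast the hypotheses through the closed $1$-form $\alpha:=\iota_B\,d\sigma$ obtained by contracting the area form of $\partial\Omega$ with $B$, and then to straighten the orbit foliation of $B$ using the classical structure of measure-preserving flows on the $2$-torus. Divergence-freeness of $B|_{\partial\Omega}$ is exactly $d\alpha=0$, and $\alpha$ is nowhere vanishing since $B$ is and $d\sigma$ is nondegenerate; because a nowhere-vanishing closed $1$-form on a closed surface is never exact (a primitive would have a critical point), $[\alpha]\neq 0$, so the periods $P:=\int_{\sigma_p}\alpha$ and $Q:=\int_{\sigma_t}\alpha$ do not both vanish. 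I would first record the identity $\int_{\partial\Omega}\gamma_p\cdot B\,d\sigma=\varepsilon Q$ and $\int_{\partial\Omega}\gamma_t\cdot B\,d\sigma=-\varepsilon P$, where $\varepsilon=\pm1$ is the intersection sign of $(\sigma_p,\sigma_t)$: writing the integrand as $\gamma_p^\flat\wedge\star B^\flat=\gamma_p^\flat\wedge\alpha$ and using that $\gamma_p^\flat,\gamma_t^\flat$ are the harmonic forms dual to $\sigma_p,\sigma_t$ (normalised as in the paragraph before the lemma), the integral is evaluated by the intersection form on $T^2$. Thus $(\overline P(B),\overline Q(B))$ of (\ref{AppBE1}) are the coordinates, in the basis dual to $(\sigma_p,\sigma_t)$, of the Schwartzman asymptotic cycle of the $B$-flow with respect to the (invariant) normalised area measure $d\sigma/|\partial\Omega|$; since $\langle[\alpha],\cdot\rangle$ annihilates it, this pair is a nonzero multiple of the homological orbit direction $(Q,-P)$, with the sign (orientation) fixed.

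For the straightening I would pick a smooth embedded closed transversal $C\subset\partial\Omega$ to $B$ whose homology class is $[\sigma_t]$ (if $Q\neq 0$) or $[\sigma_p]$ (if $Q=0$); either way this class meets the orbit direction nontrivially, and an embedded closed transversal in it exists by straightening a representative curve along the flow. The $B$-flow is minimal when $[\alpha]$ is irrational, and has only closed essential orbits when $[\alpha]$ is rational (in the rational case $\alpha=c\,dh$ for a submersion $h:\partial\Omega\to\mathbb{R}/\mathbb{Z}$ with connected fibres, giving product coordinates in which $\alpha$ is a constant multiple of one coordinate differential and every orbit is a coordinate circle), so in all cases every orbit meets $C$, the first-return map $R:C\to C$ and the return time $\tau:C\to(0,\infty)$ are well defined and smooth, and $R$ preserves the smooth nowhere-vanishing $1$-form $\alpha|_C$ (nonvanishing since $C$ is transverse to $B$) — this follows from $\phi_s^*\alpha=\alpha$ for the $B$-flow $\phi_s$ together with $\iota_B\alpha=0$. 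Integrating a positive multiple of $\alpha|_C$ then gives a diffeomorphism of $C$ conjugating $R$ to a Lebesgue-preserving circle diffeomorphism, hence to a rigid rotation; suspending, and using that the suspension of a rotation with an arbitrary positive roof is orbit-equivalent to the one with constant roof, the orbit foliation of $B$ becomes $C^\infty$-diffeomorphic to a linear foliation on $T^2$. Composing with the automorphism of $T^2$ induced by the appropriate element of $GL(2,\mathbb{Z})$ I obtain $\Xi:\partial\Omega\to T^2$ which carries $[\sigma_p],[\sigma_t]$ to the homotopy classes of the $\partial_\phi$- and $\partial_\theta$-lines and satisfies $\Xi_*B=\lambda\,W$ for a nowhere-vanishing $\lambda\in C^\infty(T^2)$ (of constant sign, $T^2$ being connected) and a constant field $W$ proportional to $Q\partial_\phi-P\partial_\theta$.

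Finally, since $\Xi_*B$ and the constant field $\overline P(B)\partial_\phi+\overline Q(B)\partial_\theta$ span the same oriented line field on $T^2$ — with the orientation pinned down by the asymptotic-cycle computation of the first paragraph, which rules out the opposite sign — the pointwise ratio is a nowhere-vanishing positive function, and pulling it back by $\Xi$ yields $\mu\in C^\infty(\partial\Omega,(0,\infty))$ with $\Xi_*(\mu B)=\overline P(B)\partial_\phi+\overline Q(B)\partial_\theta$; the homotopy relations for the field lines of $\partial_\phi,\partial_\theta$ are built into the construction of $\Xi$. The main difficulty is not a single hard step but the bookkeeping: keeping $\Xi$ compatible with the prescribed basis $(\sigma_p,\sigma_t)$, producing exactly the constants $\overline P(B),\overline Q(B)$ rather than merely some linear field, and tracking the orientation signs. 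I would stress that no small-divisor input is needed, since the reparametrisation $\mu$ is at our disposal and hence only orbit-equivalence to a linear flow is required, and $R$ is linearised by the elementary antiderivative argument rather than by Herman--Yoccoz theory, so the return-time function is never straightened. The same universal-cover computation (lift $\alpha$ to an exact form $dH$ on $\mathbb{R}^2$, with $H$ increasing by $P$ and $Q$ under the two deck transformations) yields the asymptotic-cycle limits $\lim_{S\to\infty}S^{-1}\int_{\sigma_X[0,S]}\gamma_p$ and $\lim_{S\to\infty}S^{-1}\int_{\sigma_X[0,S]}\gamma_t$ quoted from this proof in \Cref{S3L3}.
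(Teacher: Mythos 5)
Your proposal is correct, but it takes a genuinely different route from the paper on both of its main steps. For the rectification the paper invokes the semi-rectifiability theorem of \cite[Theorem 9]{PPS22} as a black box to get $\Xi_*(\mu B)=a\partial_\phi+b\partial_\theta$ at once, whereas you rebuild it from scratch (closed transversal $C$, first-return map preserving $\alpha|_C$ because $\mathcal{L}_B\alpha=0$ and $\iota_B\alpha=0$, conjugation to a rigid rotation by integrating $\alpha|_C$, then suspension with the roof flattened). For the identification of the constants the paper derives the time-average formulas (\ref{AppBE4})--(\ref{AppBE5}) and then appeals to the ergodic theorem \cite[Theorem 2]{TME67} for the area-preserving flow of $B$ to convert them into the space averages (\ref{AppBE1}); you instead evaluate $\int_{\partial\Omega}\gamma_p\cdot B\,d\sigma=\int_{\partial\Omega}\gamma_p^{\flat}\wedge\iota_B d\sigma$ by the cup product, obtaining the exact algebraic identity $(\overline{P}(B),\overline{Q}(B))=\tfrac{\varepsilon}{|\partial\Omega|}(Q,-P)$ in terms of the periods of $\iota_Bd\sigma$, and identify $(\overline{P}(B),\overline{Q}(B))$ as the Schwartzman cycle with no ergodic limit at all (reparametrisation by a positive $\mu$ only rescales that cycle by a positive constant, which justifies your final sign argument). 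What each buys: the paper's route is shorter modulo its citations, and its intermediate limits (\ref{AppBE4})--(\ref{AppBE5}) are precisely what is quoted later in the proof of \Cref{S3L3} --- your universal-cover remark ($\alpha=dH$ with $H$ equivariant) recovers these, so nothing downstream is lost; your route is self-contained and makes the dependence of $(\overline{P},\overline{Q})$ on $[\iota_Bd\sigma]$ explicit rather than dynamical. The two places you should write out in full are the existence of an \emph{embedded} closed transversal in the prescribed homology class and the fact that it is a global cross-section with bounded return time; both are standard for foliations defined by a nowhere-vanishing closed $1$-form on $T^2$ (no Reeb components, minimal or fibred according to whether the period ratio is irrational or rational), but they are the only assertions in your sketch that an unqualified ``every orbit meets $C$'' leaves open.
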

\begin{proof}[Proof of \Cref{AppBL1}]
	We start by observing that according to \cite[Theorem 9]{PPS22} $B|_{\partial\Omega}$ is semi-rectifiable which means that there exists a diffeomorphism $\Xi:\partial\Omega\rightarrow T^2$ and a positive smooth function $\mu$ on $\partial\Omega$ such that $\Xi_*(\mu B)=a\partial_{\phi}+b\partial_{\theta}$ for suitable $a,b\in \mathbb{R}$. Since $\sigma_p,\sigma_t$ generate the first fundamental group of $\partial\Omega$ (at a common intersection point) the same is true for $\Xi\circ\sigma_p$ and $\Xi\circ \sigma_t$ so that upon composition of $\Xi$ with a suitable automorphism of $T^2$ we may w.l.o.g. assume that the field lines of $\partial_\phi$ and $\partial_\theta$ are homotopic to $\Xi\circ \sigma_p$ and $\Xi\circ \sigma_t$ respectively. We now identify $\gamma_p,\gamma_t$ with their corresponding $1$-forms via the musical isomorphisms (denoted in the same way) and notice that $(\Xi^{-1})^{\#}\gamma_p$ is a closed $1$-form where $(\Xi^{-1})^{\#}$ denotes the pullback via the inverse of $\Xi$. We can therefore express
	\begin{gather}
		\nonumber
		(\Xi^{-1})^{\#}\gamma_p=df+\alpha d\phi+\beta d\theta
	\end{gather}
	for suitable $\alpha,\beta\in \mathbb{R}$ and where $d\phi$, $d\theta$ denote the standard $1$-forms on $T^2$ which we obtain from the coordinate fields $\partial_{\phi}$, $\partial_{\theta}$ upon identification with respect to the standard flat metric on $T^2$. Now we let $\sigma_{\phi}$ denote the field line of $\partial_{\phi}$ and compute
	\begin{gather}
		\nonumber
		1=\int_{\sigma_p}\gamma_p=\int_{\Xi\circ \sigma_p}(\Xi^{-1})^{\#}\gamma_p=\int_{\sigma_{\phi}}(df+\alpha d\phi+\beta d\theta)=\alpha
	\end{gather}
	where we used that $\Xi\circ \sigma_p$ is homotopic to $\sigma_{\phi}$ that $\sigma_{\phi}$ is a closed loop, that $\int_{\sigma_{\phi}}d\theta=0$ by direct calculation and where we consider $T^2=\mathbb{R}^2\slash \mathbb{Z}^2$, i.e. assume w.l.o.g. that the variables $\phi$ and $\theta$ are $1$-periodic. Using that $\int_{\sigma_t}\gamma_p=0$ we conclude in the same fashion that $\beta=0$ and hence
	\begin{AppB}
		\label{AppBE2}
		(\Xi^{-1})^{\#}\gamma_p=df+d\phi\text{ and }(\Xi^{-1})^{\#}\gamma_t=d\widetilde{f}+d\theta
	\end{AppB}
	for suitable $f,\widetilde{f}\in C^{\infty}(T^2)$. We now let $\sigma$ be a field line of $\Xi_*B=\frac{a\partial_{\phi}+b\partial_{\theta}}{\mu\circ\Xi^{-1}}$ which are given by a reparametrisation of the curves $\widetilde{\sigma}(s)=(\phi_0+as,\theta_0+bs)$, where $(\phi_0,\theta_0)$ are the initial conditions. We notice that the reparametrisation $s(t)$ satisfies $\sigma(t)=\widetilde{\sigma}(s)$ and is a strictly increasing function with $s(t)\rightarrow \infty$ as $t\rightarrow\infty$. We can then compute according to (\ref{AppBE2})
	\begin{AppB}
		\label{AppBE3}
		\int_{\sigma[0,T]}(\Xi^{-1})^{\#}\gamma_p=\int_{\widetilde{\sigma}[0,S]}(df+d\phi)=f(\sigma(T))-f(\phi_0,\theta_0)+\int_{\widetilde{\sigma}[0,S]}d\phi=f(\sigma(T))-f(\phi_0,\theta_0)+aS.
	\end{AppB}
	On the other hand we observe that $\Xi^{-1}\circ \sigma=:\sigma_B$ is a field line of $(\Xi^{-1})_*(\Xi_*B)=B$ so that
	\begin{gather}
		\nonumber
		\int_{\sigma[0,T]}(\Xi^{-1})^{\#}\gamma_p=\int_{\sigma_B[0,T]}\gamma_p
	\end{gather}
	from which we deduce
	\begin{gather}
		\nonumber
		a=\frac{\int_{\sigma_B[0,T]}\gamma_p}{S}+\frac{f(\phi_0,\theta_0)-f(\sigma(T))}{S}.
	\end{gather}
	We observe further that $f$ is globally bounded and thus we may take the limit $T\rightarrow\infty$ (keep in mind that $S(T)\rightarrow\infty$ as $T\rightarrow\infty$) so that we obtain the formula
	\begin{AppB}
		\label{AppBE4}
		a=\lim_{T\rightarrow\infty}\frac{\int_{\sigma_B[0,T]}\gamma_p}{S(T)}.
	\end{AppB}
	The identity (\ref{AppBE4}) is valid for any initial condition $(\phi_0,\theta_0)$ and in an identical way we find
	\begin{AppB}
		\label{AppBE5}
		b=\lim_{T\rightarrow\infty}\frac{\int_{\sigma_B[0,T]}\gamma_t}{S(T)}.
	\end{AppB}
	Now since $B$ is no-where vanishing we must have either $a\neq 0$ or $b\neq 0$ (or both) and we assume w.l.o.g. that $a\neq 0$. We can then take the quotient of (\ref{AppBE4}) and (\ref{AppBE5}) to deduce
	\begin{AppB}
		\label{AppBE6}
		\frac{b}{a}=\frac{\lim_{T\rightarrow\infty}\frac{\int_{\sigma_B[0,T]}\gamma_t}{S}}{\lim_{T\rightarrow\infty}\frac{\int_{\sigma_B[0,T]}\gamma_p}{S}}=\lim_{T\rightarrow\infty}\frac{\int_{\sigma_B[0,T]}\gamma_t}{\int_{\sigma_B[0,T]}\gamma_p}=\lim_{T\rightarrow\infty}\frac{\frac{\int_{\sigma_B[0,T]}\gamma_t}{T}}{\frac{\int_{\sigma_B[0,T]}\gamma_p}{T}}.
	\end{AppB}
	Finally, we notice that $\frac{\int_{\sigma_B[0,T]}\gamma_t}{T}=\frac{\int_0^TB(\sigma_B(\tau))\cdot \gamma_t(\sigma_B(\tau))d\tau}{T}$ and that the flow of $B$ is area-preserving. It hence follows from standard ergodic theoretical results, cf. \cite[Theorem 2]{TME67}, that $\lim_{T\rightarrow\infty}\frac{\int_{\sigma_B[0,T]}\gamma_t}{T}$ exists in the $L^1(\partial\Omega)$ sense and, denoting by $\hat{f}_p$ the limit function, we further have $\frac{1}{|\partial\Omega|}\int_{\partial\Omega}\hat{f}_pd\sigma=\overline{P}(B)$, recall (\ref{AppBE1}). An analogous result is true if we replace $\gamma_p$ by $\gamma_t$ and denote its limit function by $\hat{f}_t$. It follows then from (\ref{AppBE6}) that $\frac{b}{a}\hat{f}_p=\hat{f}_t$ which we can integrate to deduce that $\frac{b}{a}\overline{P}(B)=\overline{Q}(B)$. We therefore find
	\begin{gather}
		\nonumber
		\Xi_*(\mu B)=a\partial_{\phi}+b\partial_{\theta}=a\left(\partial_{\phi}+\frac{b}{a}\partial_{\theta}\right)=\frac{a}{\overline{P}(B)}\left(\overline{P}(B)\partial_{\phi}+\overline{Q}(B)\partial_{\theta}\right).
	\end{gather}
	By adjusting the period by composition with an additional diffeomorphism we may achieve that $a=\overline{P}(B)$ which completes the proof. 
\end{proof}
Before we come to the main result of this section let us recall for convenience the following notation: Given a regular solid torus $\Omega$ with core orbit $\chi$ and which is generated by the family of disc diffeomorphisms $\psi_s$ we let $\kappa$,$\tau$ denote the curvature and torsion of $\chi$ respectively and for a given smooth vector field $B$ on $\Omega$ we set
\begin{gather}
	\nonumber
	B_+=\max_{x\in \partial\Omega}|B(x)|\text{, }\Pi=\sup_{s\in [0,L]}\operatorname{Per}(D_s)\text{, }\delta=\sup_{s\in [0,L]}\operatorname{diam}(D_s)\text{, }\beta=\underset{\substack{s\in [0,L]\\ \phi\in [0,2\pi]}}{\max}|(\partial_s\psi)(s,1,\phi)|\text{, }
	\\
	\nonumber
	\alpha_-=\underset{\substack{s\in [0,L]\\ \phi\in [0,2\pi]}}{\min}|(\partial_{\phi}\psi)(s,1,\phi)|\text{, }\alpha_+=\underset{\substack{s\in [0,L]\\ \phi\in [0,2\pi]}}{\max}|(\partial_{\phi}\psi)(s,1,\phi)|\text{, }\xi=\frac{\alpha_-}{\alpha_+}\sqrt{\frac{(1-\kappa_+\delta)^2}{(1-\kappa_+\delta)^2+2(\tau^2_+\delta^2+\beta^2)}}.
\end{gather}
Here as usual $\kappa_+=\max_{s\in [0,L]}|\kappa(s)|$ and $\tau_+:=\max_{s\in [0,L]}|\tau(s)|$.
\begin{cor}
	\label{AppBC2}
	Let $\Omega\subset\mathbb{R}^3$ be a regular smooth solid torus with core orbit $\chi$. If $B$ is a smooth vector field on $\Omega$ which is tangent to and no-where vanishing on $\partial\Omega$ and such that $\operatorname{curl}(B)\parallel \partial\Omega$ and $\int_{\sigma_t}B=0$ for a toroidal loop $\sigma_t$ on $\partial\Omega$, then there exists a strictly increasing sequence $0<T_n\rightarrow\infty$ such that for every field line $\sigma$ of $B$ on $\partial\Omega$ we have the estimate
	\begin{AppB}
		\label{AppBE7}
		\lim_{n\rightarrow\infty}\left|\frac{\int_{\sigma[0,T_n]}\gamma_p}{T_n}\right|\leq\frac{B_+}{\Pi\xi}
	\end{AppB}
	where $\gamma_p$ is the harmonic vector field induced by some fixed pair of poloidal and toroidal loops $\sigma_p,\sigma_t$ on $\partial\Omega$. If in addition $\Omega$ is a tubular neighbourhood around $\chi$, then we may replace $\xi$ in (\ref{AppBE7}) by $\eta$, as defined in (\ref{S2E3}).
\end{cor}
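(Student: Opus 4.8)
The plan is to bound $\int_{\sigma[0,T]}\gamma_p$ along a field line $\sigma$ of $B$ on $\partial\Omega$ by (a constant $\le B_+/(\Pi\xi)$) times $T$. Since $\operatorname{curl}(B)\parallel\partial\Omega$, the tangential field $B|_{\partial\Omega}$ is closed on $\partial\Omega$; being nowhere vanishing and divergence-free on $\partial\Omega$ it is semi-rectifiable, so \Cref{AppBL1} applies and produces a diffeomorphism $\Xi:\partial\Omega\to T^2$ and $\mu\in C^\infty(\partial\Omega,(0,\infty))$ with $\Xi_*(\mu B)=\overline P(B)\partial_\phi+\overline Q(B)\partial_\theta$, the field lines of $\partial_\phi$ being homotopic to $\Xi\circ\sigma_p$. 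The hypothesis $\int_{\sigma_t}B=0$ enters only to identify the de Rham class of $B|_{\partial\Omega}$ with a multiple of $[\gamma_p]$; equivalently, since a nowhere vanishing closed $1$-form on $T^2$ cannot be exact, it guarantees $\int_{\sigma_p}B\neq0$.

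First I would fix any smooth closed $1$-form $\omega$ on $\partial\Omega$ with $\int_{\sigma_p}\omega=1$, $\int_{\sigma_t}\omega=0$; then $\gamma_p=\omega+df$ for some $f\in C^\infty(\partial\Omega)$, so $\int_{\sigma[0,T]}\gamma_p=\int_{\sigma[0,T]}\omega+f(\sigma(T))-f(\sigma(0))$ and hence $\frac1T\int_{\sigma[0,T]}\gamma_p=\frac1T\int_0^T\omega(B(\sigma(\tau)))\,d\tau+o(1)$. Because $\sigma$ stays on $\partial\Omega$, where $|\dot\sigma|=|B(\sigma)|\le B_+$, one has $\big|\int_0^T\omega(B(\sigma(\tau)))\,d\tau\big|\le B_+T\sup_{\partial\Omega}|\omega|$, so $\limsup_{T\to\infty}\big|\frac1T\int_{\sigma[0,T]}\gamma_p\big|\le B_+\sup_{\partial\Omega}|\omega|$ for every field line. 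To get an actual limit along one sequence valid for all field lines at once, I would use \Cref{AppBL1} again: it identifies the flow of $B$ on $\partial\Omega$ with a positive time-reparametrisation of a constant-direction flow on $T^2$, and for such a flow the poloidal rotation number $\lim_T\frac1T\int_{\sigma[0,T]}\gamma_p$ exists for every orbit (computed over a periodic orbit, or via unique ergodicity of the linear flow in the irrational case); one then takes $T_n=n$ and combines with the $\limsup$ bound.

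The heart of the matter is that such an $\omega$ can be chosen with $\sup_{\partial\Omega}|\omega|\le\frac1{\Pi\xi}$. I would work in the coordinates $(s,\rho,\phi)$ of the Frenet parametrisation $\Psi(s,x)=\chi(s)+\mu_s(x)N(s)+\nu_s(x)B(s)$ restricted to $\partial\Omega=\{\rho=1\}$. Using $N'=-\kappa T+\tau B$, $B'=-\tau N$ one has $\partial_s\Psi=(1-\kappa\mu)T+(\partial_s\mu-\tau\nu)N+(\tau\mu+\partial_s\nu)B$ and $\partial_\phi\Psi=(\partial_\phi\mu)N+(\partial_\phi\nu)B$, so the induced metric on $\partial\Omega$ satisfies $g_{\phi\phi}=|(\partial_\phi\psi)(s,1,\phi)|^2\in[\alpha_-^2,\alpha_+^2]$, $\det g=|\partial_s\Psi\times\partial_\phi\Psi|^2\ge(1-\kappa\mu)^2\alpha_-^2\ge(1-\kappa_+\delta)^2\alpha_-^2$, and $g_{ss}=|\partial_s\Psi|^2\le(1-\kappa\mu)^2+2(\beta^2+\tau_+^2\delta^2)$. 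Starting from $\frac1{2\pi}d\phi$ (which already satisfies $\int_{\sigma_p}=1$) and correcting it by a suitable multiple of the toroidal representative produced in \Cref{S3L5} so as to annihilate the $\sigma_t$-period, one obtains a closed $\omega$ in the class $[\gamma_p]$ with $\sup_{\partial\Omega}|\omega|^2\le\frac{(1-\kappa_+\delta)^2+2(\beta^2+\tau_+^2\delta^2)}{(2\pi)^2(1-\kappa_+\delta)^2\alpha_-^2}$; since $\Pi=\sup_s\int_0^{2\pi}|(\partial_\phi\psi)(s,1,\phi)|\,d\phi\le2\pi\alpha_+$, this is $\le\frac1{\Pi^2\xi^2}$, which together with the previous paragraph yields $\lim_n\big|\frac1{T_n}\int_{\sigma[0,T_n]}\gamma_p\big|\le\frac{B_+}{\Pi\xi}$. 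In the tubular case $\psi_s(\rho,\phi)=R(\rho\cos\phi,\rho\sin\phi)$, so $\alpha_\pm=R$, $\beta=0$, $\Pi=2\pi R$, and the metric collapses to $\det g=(1-\kappa R\cos\phi)^2R^2$, $g_{ss}=(1-\kappa R\cos\phi)^2+\tau^2R^2$, giving $\sup_{\partial\Omega}|\tfrac1{2\pi}d\phi|^2\le\frac1{R^2\eta^2}=\frac1{\Pi^2\eta^2}$ and hence the bound with $\eta$.

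I expect the main obstacle to be exactly this last step. The form $\frac1{2\pi}d\phi$ does not quite represent $[\gamma_p]$: it differs from it by the multiple of $[\gamma_t]$ equal to the self-linking number of the Frenet framing of $\chi$ (writhe plus total torsion over $2\pi$), so the real work is to carry out the period correction in such a way that neither the writhe nor the total torsion of $\chi$ survives in the final bound, only the pointwise torsion and twist data already built into $\xi$ and $\eta$. The secondary difficulty — producing a single sequence $T_n$ that works for all field lines simultaneously — is precisely what forces the use of \Cref{AppBL1}: without knowing that the boundary flow is a time-changed linear flow there is no reason for the averages $\frac1T\int_{\sigma[0,T]}\gamma_p$ to converge, let alone uniformly in the starting point.
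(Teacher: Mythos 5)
Your overall strategy is a dual reformulation of the paper's argument: you seek a closed $1$-form $\omega$ in the class of $\gamma_p$ with $\sup_{\partial\Omega}|\omega|\le \frac{1}{\Pi\xi}$ and integrate it along field lines, whereas the paper argues primally, showing that any boundary curve making $n$ poloidal turns has length at least $n\Pi\xi$ (inequality (\ref{AppBE12})) and combining this with $\mathcal{L}(\sigma[0,T])\le B_+T$ and $\int_{\sigma[0,T_n]}\gamma_p=n+\mathcal{O}(1)$ at the poloidal return times. The two formulations are morally equivalent, but the step you yourself flag as ``the real work'' is a genuine gap, not a technicality. The $\sigma_t$-period of $\frac{1}{2\pi}d\phi$ equals, up to sign, the self-linking number of the Frenet framing of $\chi$, i.e.\ $\operatorname{Wr}(\chi)+\frac{1}{2\pi}\int_0^L\tau\,ds$, so the correction you must add is that integer multiple of a toroidal representative. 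The total-torsion part of this coefficient might be absorbable, but the writhe is a global invariant of the embedding that is not controlled by any of the quantities $\kappa_+,\tau_+,\delta,\beta,\alpha_\pm$ entering $\xi$: for fixed cross-sectional data one can make $\operatorname{Wr}(\chi)$ arbitrarily large. Hence the sup-norm of your corrected form cannot be bounded by $\frac{1}{\Pi\xi}$ by this construction, and completing the argument would in effect require re-proving the paper's length estimate and invoking a stable-norm duality, at which point the detour through forms buys nothing.

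There is a second, more local, error: you apply \Cref{AppBL1} to $B$ itself, asserting that $B|_{\partial\Omega}$ is divergence-free on $\partial\Omega$. The hypothesis $\operatorname{curl}(B)\parallel\partial\Omega$ gives only that $B|_{\partial\Omega}$ is \emph{closed} as a $1$-form on the surface; it need not be co-closed (note that $B$ is not even assumed divergence-free in $\Omega$ here). The paper instead applies \Cref{AppBL1} to $X=\mathcal{N}\times B$, which \emph{is} divergence-free on $\partial\Omega$ precisely because $B|_{\partial\Omega}$ is closed, deduces from $\int_{\sigma_t}B=0$ that $\overline{P}(X)=0$ so that the orbits of $X$ are closed toroidal loops, and then uses transversality of $B$ to $X$ to define the return times $T_n$. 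Your claim that the boundary flow of $B$ is a time-changed linear flow, which you need both for the existence of the limit in (\ref{AppBE7}) and for a single sequence $T_n$ valid for all orbits, therefore does not follow from the lemma as you invoke it. Your metric computations ($\det g\ge(1-\kappa_+\delta)^2\alpha_-^2$, $g_{ss}\le(1-\kappa\mu)^2+2(\beta^2+\tau_+^2\delta^2)$, $\Pi\le 2\pi\alpha_+$) are correct and do reproduce the factor $\xi$, so the quantitative skeleton is sound; the proof fails only, but decisively, at the two points above.
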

\begin{proof}[Proof of \Cref{AppBC2}]
	We start by letting $x(t)$ be any smooth curve in $\partial\Omega$. We then know by definition that $\Omega$ is generated by a family of smooth maps $\psi_s:D\rightarrow \mathbb{R}^2$ which are diffeomorphisms onto their image. By using standard polar coordinates $(\rho,\phi)$ on $D$ we can view $\psi_s=(\mu_s,\nu_s)$ as a function of $\rho$ and $\phi$. We notice that $\rho=1$ corresponds precisely to those points of the $D_s$ which make up $\partial\Omega$, so that we can express the curve $x(t)$ as follows
	\begin{gather}
		\nonumber
		x(t)=\chi(s(t))+\mu_{s(t)}(1,\phi(t))N(s(t))+\nu_{s(t)}(1,\phi(t))B(s(t)),
	\end{gather}
	recall (\ref{S2E1}). From now on we write with an abuse of notation $\mu_s(\phi)\equiv \mu_{s}(1,\phi)$ and similarly for $\nu$. We can then compute
	\begin{gather}
		\nonumber
		|\dot{x}(t)|^2=\dot{s}^2(1-\kappa \mu_s(\phi))^2+\left[\dot{s}\left((\partial_s\mu_s)-\nu_s\tau\right)+\dot{\phi}(\partial_{\phi}\mu_s)\right]^2+\left[\dot{s}\left((\partial_s\nu_s)+\tau\mu_s\right)+\dot{\phi}(\partial_{\phi}\nu_s)\right]^2
	\end{gather}
	where $\kappa=\kappa(s(t))$ is the curvature of $\chi$ and $\tau=\tau(s(t))$ is the torsion of $\chi$. Applying the Peter-Paul inequality yields the estimate
	\begin{AppB}
		\label{AppBE8}
		|\dot{x}(t)|^2\geq \dot{s}^2\left[\left(1-\kappa \mu_s\right)^2-\frac{1-\epsilon}{\epsilon}\left((\partial_s\mu_s)-\nu_s\tau\right)^2+\left((\partial_s\nu_s)+\tau\mu_s\right)^2\right]+\dot{\phi}^2|\partial_{\phi}\psi_s|^2(1-\epsilon)
	\end{AppB}
	for all $0<\epsilon<1$. We recall our notation $\beta=\underset{\substack{s\in [0,L]\\ \phi\in [0,2\pi]}}{\max}|(\partial_s\psi)(s,1,\phi)|$, $\delta=\sup_{s\in [0,L]}\operatorname{diam}(D_s)$ where $D_s=\psi_s(D)$, $\kappa_+=\max_{s\in [0,L]}|\kappa(s)|$ and $\tau_+=\max_{s\in [0,L]}|\tau(s)|$. We now observe that (due to the restriction we impose on the maximal diameter)
	\begin{AppB}
		\label{AppBE9}
		(1-\kappa\mu_s)\geq (1-\delta\kappa_+)>0\text{ and }(\partial_s\mu_s-\nu_s\tau)^2+(\partial_s\nu_s+\tau\mu_s)^2\leq 2\tau_+^2\delta^2+2\beta^2.
	\end{AppB}
	 With this observation it is easy to verify that for the choice $\epsilon:=\frac{2(\tau^2_+\delta^2+\beta^2)}{2(\tau^2_+\delta^2+\beta^2)+(1-\kappa_+\delta)^2}$ the square bracket next to $\dot{s}^2$ in (\ref{AppBE8}) is non-negative and thus for this specific choice of $\epsilon$ we obtain
	 \begin{AppB}
	 	\label{AppBE10}
	 	|\dot{x}(t)|^2\geq \dot{\phi}^2|\partial_\phi\psi_s|^2\frac{(1-\kappa_+\delta)^2}{(1-\kappa_+\delta)^2+2(\tau^2_+\delta^2+\beta^2)}.
	 \end{AppB}
	We notice that $(s,\phi)\mapsto |\partial_{\phi}\psi_s(1,\phi)|$ is no-where vanishing and thus $\alpha_-=\underset{\substack{s\in [0,L]\\ \phi\in [0,2\pi]}}{\min}|(\partial_\phi\psi)(s,1,\phi)|>0$. Further, we may set $\alpha_+=\underset{\substack{s\in [0,L]\\ \phi\in [0,2\pi]}}{\max}|(\partial_{\phi}\psi)(s,1,\phi)|$ and can obtain the following lower bound from (\ref{AppBE10})
	\begin{AppB}
		\label{AppBE11}
		|\dot{x}(t)|^2\geq \dot{\phi}^2\left(\frac{\alpha_-}{\alpha_+}\right)^2\frac{(1-\kappa_+\delta)^2}{(1-\kappa_+\delta)^2+2(\tau^2_+\delta^2+\beta^2)}|(\partial_{\phi}\psi_{s_*})(1,\phi(t))|^2
	\end{AppB}
	where $s_*\in [0,L]$ is any fixed value. The main conclusion now is that if we define the curve
	\begin{gather}
		\nonumber
		y(t):=\chi(s_*)+\mu_{s_*}(1,\phi(t))N(s_*)+\nu_{s_*}(1,\phi(t))B(s_*),
	\end{gather}
	then according to (\ref{AppBE11}) for every $0<T$ we have the estimate
	\begin{AppB}
		\label{AppBE12}
		\mathcal{L}(x[0,T])\geq \xi\mathcal{L}(y[0,T])\text{ where }\xi:=\frac{\alpha_-}{\alpha_+}\sqrt{\frac{(1-\kappa_+\delta)^2}{(1-\kappa_+\delta)^2+2(\tau^2_+\delta^2+\beta^2)}}
	\end{AppB}
	and where $\mathcal{L}(x[0,T])$ denotes the length of $x[0,T]$. 
	
	We now let $x(t)$ be any fixed field line of $B$. By assumption $B,\operatorname{curl}(B)\parallel \partial\Omega$ and $\int_{\sigma_t}B=0$ for a toroidal curve $\sigma_t$. This implies that $B|_{\partial\Omega}=df+a\gamma_p$ for some $f\in C^{\infty}(\partial\Omega)$ and $a\in \mathbb{R}$. This in turn implies that $X:=\mathcal{N}\times B$ is div-free on $\partial\Omega$ and $L^2(\partial\Omega)$-orthogonal to $\gamma_p$ so that $\overline{P}(X)=0$. Further, we assume that $B$, and hence $X$, are no-where vanishing. It is then a consequence of \Cref{AppBL1} that the field lines of $X$ are all closed and homotopic to $\sigma_t$ which, as a toroidal loop, is in turn (with the right choice of orientation) homotopic to a curve of the form $s\mapsto \chi(s)+\mu_s(1,\phi_*)N(s)+\nu_s(1,\phi_*)B(s)$ for some $\phi_*\in [0,2\pi)$. It then follows from the fact that $B$ is everywhere transverse to $X$ that there exists a strictly increasing sequence $T_n\rightarrow \infty$ such that if we let $x(t)$ be a field of $B$ then $\phi(T_n)=\phi(T_{n+1})$ for all $n$ and $\phi([T_n,T_{n+1}])=[0,2\pi]$, i.e. the field lines of $B$ make a full poloidal turn infinitely many times (possibly not closing on itself). This in turn implies by definition of the curve $y(t)$, that $y([0,T_n])$ made at least $n$ poloidal turns and since $y$ is a curve of fixed arc-length value $s_*$ we notice further that the length of $y$ along a full poloidal turn corresponds to the perimeter of $D_{s_*}$ so that (\ref{AppBE12}) implies $\mathcal{L}(x[0,T_n])\geq \xi n\operatorname{Per}(D_{s_*})$ for every $s_*\in [0,L]$. Hence, if we let $\Pi=\sup_{s\in [0,L]}\operatorname{Per}(D_s)$ we obtain the inequality
	\begin{AppB}
		\label{AppBE13}
		 \mathcal{L}(x[0,T_n])\geq \xi n\Pi\text{ for all }n.
	\end{AppB}
	In addition, since $x$ was chosen to be a field line of $B$, we find $\mathcal{L}(x[0,T])=\int_0^T|\dot{x}(t)|dt=\int_0^T|B(x(t))|dt\leq B_+T$ where $B_+:=\sup_{z\in \partial\Omega}|B(z)|$. Hence (\ref{AppBE13}) becomes
	\begin{AppB}
		\label{AppBE14}
		\frac{1}{T_n}\leq \frac{B_+}{n\xi \Pi}.
	\end{AppB}
	Further, by our choice of sequence, we see that if we consider the curve $x[0,T_{n}]$, then $\phi(0)=\phi(T_{n})$ and if we close the curve along the toroidal loop $z_n(s)=\chi(s)+\mu_s(1,\phi_*)N(s)+\nu_s(1,\phi_*)B(s)$ with $\phi_*=\phi(0)$ and where the range of $s$ is chosen according to the values of $s(0)$ and $s(T_{n})$ (which may, or may not, coincide), then we will obtain a (possibly non-simple) closed curve which is homotopic to $n\sigma_p$ for some poloidal curve $\sigma_p$ since by choice of the sequence $T_n$ we will have made precisely $n$ poloidal turns. In particular $\int_{x[0,T_{n}]\oplus z_n}\gamma_p=n$. The key observation is that the length of $z_n$ can be bounded independently of $n$ and therefore $\int_{x[0,T_n]}\gamma_p=n+\mathcal{O}(1)$ as $n\rightarrow\infty$. We can combine this with (\ref{AppBE14}) to deduce that
	\begin{AppB}
		\label{AppBE15}
		\limsup_{n\rightarrow\infty}\left|\frac{\int_{x[0,T_n]}\gamma_p}{T_n}\right|\leq \frac{B_+}{\Pi\xi}.
	\end{AppB}
	Lastly, we observe that if $\Omega$ is a tubular neighbourhood of radius $R$, then $\psi_s(\rho,\phi)=(R\rho\cos(\phi),R\rho\sin(\phi))$ and so $\psi_s(1,\phi)=(R\cos(\phi),R\sin(\phi))$, $\mu_s(1,\phi)=R\cos(\phi)$, $\nu_s(1,\phi)=R\sin(\phi)$. In particular $\mu^2_s(1,\phi)+\nu^2_s(1,\phi)=R^2$ and $\beta=0$, which implies that we may replace the factor $\delta$ in (\ref{AppBE9}) by $R$ and may drop the factor $2$ in the second inequality. Further, $|\partial_{\phi}\psi(s,1,\phi)|^2=R^2$ so that $\alpha_-=\alpha_+=R$ and consequently we may replace $\xi$ in (\ref{AppBE12}), and hence also in (\ref{AppBE13}),(\ref{AppBE14}) and (\ref{AppBE15}) by $\eta:=\sqrt{\frac{(1-\kappa_+R)^2}{(1-\kappa_+R)^2+\tau^2_+R^2}}$ in case that $\Omega$ is a tubular neighbourhood of radius $R$.
\end{proof}
\bibliographystyle{plain}
\bibliography{mybibfileNOHYPERLINK}
\footnotesize
\end{document}